\tikzstyle{every picture} = [>=latex]
\newtheorem{theorem}{Theorem}[section]
\newtheorem{lemma}[theorem]{Lemma}
\newtheorem{proposition}[theorem]{Proposition}
\newtheorem{corollary}[theorem]{Corollary}
\theoremstyle{definition}
\newtheorem{definition}[theorem]{Definition}
\theoremstyle{remark}
\newtheorem{problem}[theorem]{Problem}
\newtheorem{subdefinition}[theorem]{Definition}
\newtheorem{claim}[theorem]{Claim}
\def\ca#1{{\cal#1}}
\def\cw{\mathop{\mbox{\rm cw}}}
\def\hcw#1{\mathop{\text{${#1}$\rm-cw}}}
\begin{document}
\begin{frontmatter}
\myfooter[R]{~\today} 
\title{Hereditary Graph Product Structure and~$\ca H$-clique-width
\footnote{A preliminary short version with weaker results (and incomplete proofs) was published
at the MFCS 2024 conference, \url{https://dx.doi.org/10.4230/LIPIcs.MFCS.2024.61}.}
\footnote{Both authors supported by the project 26-21334S of the Czech Science Foundation.}}

\author{Petr Hlin\v{e}n\'y\corref{corauthor}\,\orcidlink{0000-0003-2125-1514}}
\ead{hlineny@fi.muni.cz}
\author{Jan~Jedelsk\'y\,\orcidlink{0000-0001-9585-2553}}
\ead{xjedelsk@fi.muni.cz}
\address{Faculty of Informatics, Masaryk University,\\Botanick\'a 68a, Brno, Czech Republic}
\cortext[corauthor]{Corresponding author}

\begin{abstract}
We introduce {\em$\ca H$-clique-width}, a new structural measure of graphs that aims to provide a hereditary analogue of the traditional graph product structure.
The definition naturally generalises the ordinary clique-width concept.
As a result, for a class $\ca H$ of graphs (such as the class of paths), the $\ca H$-clique-width of a graph $G$ equals
the least integer $t$ such that $G$ is isomorphic to an induced subgraph of the strong product of a graph from $\ca H$ and a graph of clique-width~$t$.

We study basic properties of $\ca H$-clique-width and compare it to other established structural parameters of graphs.
Notably, we prove that the celebrated Planar graph product structure theorem by Dujmovi\'c et al., and related graph product structure results,
can all be formulated with the {\em induced subgraph} containment relation.
In particular, every planar graph is isomorphic to an induced subgraph of the strong product of a path and a graph of tree-width~$39$.
\end{abstract}
\begin{keyword}
product structure\sep hereditary class\sep clique-width\sep planar graph
\end{keyword}
\end{frontmatter}

\section{Introduction}

A prominent structural result by Dujmovi\'c, Joret, Micek, Morin, Ueckerdt and Wood~\cite{DBLP:journals/jacm/DujmovicJMMUW20},
known as the {\em Planar graph product structure theorem},
claims that every planar graph is isomorphic to a subgraph of the strong product ($\boxtimes$) of a path and a graph of small tree-width.
We refer to \Cref{sec:prelim} and \Cref{thm:origprod} for definitions and details, and to \Cref{fig:strongprod}.

The original motivation for this rather recent Planar graph product structure theorem was to bound the queue number of planar graphs,
but the theorem has quickly found interesting applications and follow-up results,
among which we may mention \cite{DBLP:journals/corr/abs-2001-08860,DBLP:journals/jacm/DujmovicEGJMM21,DBLP:journals/siamdm/BonamyGP22,%
Dujmovic2020Planar,DBLP:journals/combinatorics/UeckerdtWY22,DBLP:journals/jctb/DujmovicMW23}.
Namely, the graph product structure theory has been used to study non-repetitive colourings \cite{Dujmovic2020Planar},
to design short labelling schemes~\cite{DBLP:journals/jacm/DujmovicEGJMM21,DBLP:journals/siamdm/BonamyGP22}, 
and to bound the twin-width of planar graphs~\cite{DBLP:journals/corr/abs-2202-11858,DBLP:conf/wg/JacobP22,DBLP:conf/icalp/HlinenyJ23}.

The basic goal of the graph product structure theory can be seen in studying which graph classes admit such a product structure,
that is, for which classes their members are isomorphic to a subgraph of the strong product of two (or more) simpler graphs,
specifically of the product of a path and a graph of small tree-width.
Within this traditional setting, there are two major restrictions; first that the containment (subgraph) relation is not induced,
and second, that graph classes expressible in the described way are necessarily sparse --
they are of bounded local tree-width from the definition.

We propose a different perspective on the graph product structure 
-- the {\em$\ca H$-clique-width} of \Cref{def:hcw} -- addressing both mentioned issues.
On one hand, this perspective allows us to express graphs admitting the traditional product structure as isomorphic to
{\em induced subgraphs} in strong products (\Cref{thm:fromprods0}), bringing a new {\em hereditary graph product structure} concept. 
On the other hand, within this new concept, also classes of {\em dense graphs} can be studied, which is impossible in the traditional product structure theory.
We comment on this improvement in more detail below.

The new concept is closely related to another classical structural notion in graph theory; the clique-width measure (see \Cref{sec:prelim}).
Briefly, the definition of {$\ca H$-clique-width} for a graph class $\ca H$, analogously to the traditional clique-width,
deals with $(H,\ell)$-expressions for $H\in\ca H$ such that every vertex is labelled with a pair $(i,w)$ where $w\in V(H)$ 
and $i\in\{1,\ldots,\ell\}$ is the colour, and the edge-addition operation between colours $i$ and $j$ adds edges precisely between 
vertices labelled $(i,w)$ and $(j,t)$ such that $wt\in E(H)$.  
The $\ca H$-clique-width of a graph $G$ is then the least $\ell$ for which a graph isomorphic to $G$ is built using an $(H,\ell)$-expression.
The full details are presented in \Cref{def:hcw}.

\begin{figure}[t]
$$
\begin{tikzpicture}[scale=0.6]
  \tikzstyle{every node}=[draw, black, shape=circle, minimum size=3pt,inner sep=0pt, fill=none]
  \tikzstyle{every path}=[draw, color=black!30!white]
  \draw (0,1) node[style=ellipse,dashed,fill=blue!10, inner xsep=3.3mm,inner ysep=13mm] {};
  \draw (6.1,5) node[style=ellipse,dashed,fill=blue!10,inner xsep=22mm,inner ysep=3.6mm] {};
  \foreach \x in {0,3,5,7,9}  \foreach \y in {5,3,1,-1} {
    \draw (\x,\y) node[fill=black] {};
  }
  \tikzstyle{every node}=[draw, white, fill=white, shape=circle, minimum size=3pt]
  \draw (0,5) node {};
  \tikzstyle{every path}=[draw, black, thick]
  \draw (0,-1)--(0,3);
  \draw (3,5)--(9,5) (5,5) to[bend left=16] (9,5);
  \draw (3,1)--(3,3) (5,1)--(5,3) (3,3)--(5,3) (3,1)--(5,1) (3,3)--(5,1) (3,1)--(5,3);
  \draw (3,-1)--(3,1) (5,-1)--(5,1) (3,-1)--(5,-1) (3,-1)--(5,1) (3,1)--(5,-1);
  \draw (7,1)--(7,3) (7,3)--(5,3) (7,1)--(5,1) (7,3)--(5,1) (7,1)--(5,3);
  \draw (7,-1)--(7,1) (7,-1)--(5,-1) (7,-1)--(5,1) (7,1)--(5,-1);
  \draw (7,1)--(7,3) (7,3)--(9,3) (7,1)--(9,1) (7,3)--(9,1) (7,1)--(9,3);
  \draw (7,-1)--(7,1) (7,-1)--(9,-1) (7,-1)--(9,1) (7,1)--(9,-1);
  \draw (9,1)--(9,3) (9,3) to[bend right=13] (5,3) (9,1) to[bend right=13] (5,1) (9,3)--(5,1) (9,1)--(5,3);
  \draw (9,-1)--(9,1) (9,-1) to[bend right=13] (5,-1) (9,-1)--(5,1) (9,1)--(5,-1);
\end{tikzpicture} 
\vspace*{-2ex}$$
\caption{Illustrating the strong product $\boxtimes$ of the two shaded graphs.}
\label{fig:strongprod}
\end{figure}

Our proposed alternative view of graph product structure is two-sided.
On one hand, any graph admitting the traditional graph product structure can be obtained as an {\em induced subgraph} of the strong
product of a path and a suitable graph of bounded clique-width, and even of bounded tree-width, too (\Cref{thm:fromprods00}).
On the other hand, a graph $G$ admits the induced product structure with bounded clique-width (of the relevant factor), 
if and only if $G$ has bounded $\ca H$-clique-width where $\ca H$ is the class of reflexive paths (\Cref{thm:hcwproductA}).

The fact that the latter property of having bounded $\ca H$-clique-width can also hold for classes of dense graphs
is particularly useful in the following recent application (by the same authors) in \cite{HtransdarXiv}.
There, it is proved that all transductions (i.e., logical transformations of graphs using the expressive power
of first-order logic) of graph classes admitting the traditional graph product structure must be 
of bounded $\ca H$-clique-width up to bounded perturbations.
This turns out to be a very useful characterisation of such transduced graph classes.
Since transductions of already planar graphs can contain arbitrarily large cliques, in such applications it is necessary
to admit dense graphs and use the induced subgraph relation, which our new concept offers. 

More generally, we believe that being of bounded $\ca H$-clique-width -- where $\ca H$ is the class of reflexive paths as above, 
is a property tightly connected to logic(s) on graphs.
One may view this property as being ``sandwiched'' between the graph classes of bounded clique-width
(which are related to MSO logic on graphs) and the very general monadically dependent graph classes
(which are in relation to FO logic on graphs), and we plan and propose to investigate possible restrictions of MSO logic which
could uncover a similar relation to the graph classes of bounded $\ca H$-clique-width.
Deeply understanding combinatorial properties of $\ca H$-clique-width is a prerequisite for this research direction.

Besides, the generality of our definition and results -- allowing arbitrary graphs of bounded maximum degree (\Cref{thm:fromprods0})
instead of only the paths in the product -- suggests studying $\ca H$-clique-width for various bounded-degree classes~$\ca H$
other than the class of reflexive paths.
For instance, in relation to the aforementioned product-structure studies, one may consider $\ca H$ to be the class
of the graphs $P_n\boxtimes K_c$ (the strong products of paths and the $c$-clique for some fixed~$c$, where a useful
alternative formulation of the Planar graph product structure \cite{DBLP:journals/jacm/DujmovicJMMUW20} uses, e.g.,~$c=3$),
or of $P_n\boxtimes P_m$ as another example.

The presented dual nature of our view of $\ca H$-clique-width and hereditary product structure is another promising enhancement,
and we believe that this view can contribute to finding potential algorithmic applications of the graph product structure theory.

\subsection*{Overview of results}
We start with an overview of our main results which are given here in simplified formulations and using traditional graph-theory terms.
The first one mentioned here ties the $\ca H$-clique-width to strong products of graphs.

\begin{theorem}[\Cref{thm:hcwproduct}]\label{thm:hcwproductA}
Let $\ca H'$ be a family of simple graphs and $\ca H$ be the family of graphs obtained from those of $\ca H'$ by adding a loop to each vertex.
A simple graph $G$ is of $\ca H$-clique-width at most~$\ell$, for an integer $\ell\geq2$, if and only if $G$ is isomorphic to an induced subgraph
of the strong product $H'\boxtimes M$ where $H'\in\ca H'$ and $M$ is a simple graph of clique-width at~most~$\ell$.
\end{theorem}

Further studied basic combinatorial properties of $\ca H$-clique-width include a comparison to the ordinary clique-width and local clique-width.

\begin{theorem}[\Cref{thm:bddiversity}]
There exists a function bounding the $\ca H$-clique-width of any graph $G$ in terms of the ordinary clique-width of~$G$,
if and only if there is an integer $k$ such that every graph in $\ca H$ has at most $k$ vertices with pairwise distinct closed neighbourhoods.
\end{theorem}

\begin{theorem}[simplified~\Cref{thm:localcw}]
Assume that the maximum vertex degree in a class~$\ca H$ is finite, equal to~$\Delta$.
The class of graphs of $\ca H$-clique-width at most $\ell$ is of bounded local clique-width in terms of $\ell$ and~$\Delta$.
\end{theorem}

The core results of our paper extend some of the traditional graph product structure results as follows.

\begin{theorem}[simplified~\Cref{thm:fromprods}]\label{thm:fromprods0}
Let $Q$ be a simple graph of maximum degree $\Delta\geq2$ and $M$ be a simple graph of tree-width~$k$.
Assume that a graph $G$ is a subgraph (not necessarily induced) of the strong product $Q\boxtimes M$, that is, $G\subseteq Q\boxtimes M$.
Then:
\begin{enumerate}[a)]
\item 
There exists a graph $M_1$ of clique-width at most $(\Delta^2+2)\cdot\Delta^{2(\Delta+1)(k+1)}$ such that $G$ is isomorphic to 
an {\em induced} subgraph of the strong product~$Q\boxtimes M_1$; $\>G\subseteq_i Q\boxtimes M_1$.
\item 
There exists a graph $M_2$ of tree-width at most $(k+1)(\Delta^2+1)\cdot\Delta^{2(\Delta+1)(k+1)}$ such that $G$ is isomorphic to 
an {\em induced} subgraph of the strong product~$Q\boxtimes M_2$; $\>G\subseteq_i Q\boxtimes M_2$.
\end{enumerate}
\end{theorem}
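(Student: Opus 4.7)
The plan is to exhibit $G$ as an induced subgraph of a coloured blow-up $M_1$ of $M$, driven by a tree decomposition of $M$ and by the $Q$-neighbourhood of each vertex, phrased in the $\ca H$-clique-width framework of \Cref{def:hcw}. Concretely, we aim to build an $(H,\ell)$-expression for $G$ with $H$ the reflexive version of $Q$ and $\ell\leq(\Delta^{2}+2)\cdot\Delta^{2(\Delta+1)(k+1)}$, and then translate it into an induced embedding $G\subseteq_{i}Q\boxtimes M_{1}$ via \Cref{thm:hcwproduct} (or its natural extension from reflexive paths to $\ca H$ of bounded maximum degree). Using the reflexive $H$ lets a single $\eta_{i,j}$ realise in one step both the ``same-$Q$-coordinate'' and the ``$Q$-adjacent'' contributions to edges of the strong product.

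Main construction. Fix a rooted tree decomposition $(T,(B_{t}))$ of $M$ of width $k$, and for each $m\in V(M)$ let $t(m)$ be the topmost node whose bag contains $m$. The crucial property is: for every $mm'\in E(M)$, one of $t(m),t(m')$ is an ancestor of the other, so the ``deeper'' home bag already contains both endpoints. Processing $T$ bottom-up, upon entering a node $t$ we introduce (as singletons) all vertices of $G$ in columns $m$ with $t(m)=t$, labelling $v=(q_{v},m_{v})$ by a pair $(i_{v},q_{v})$: the $H$-coordinate $q_{v}$ drives the $Q$-adjacency logic of $\eta$, while $i_{v}$ encodes, for every pair $(q',j)$ with $q'\in N_{Q}[q_{v}]$ and $j$ a position of $B_{t(m_{v})}$, the required $G$-edge pattern of $v$ in that direction. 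The relevant $\eta_{i,j}$ operations are then applied pairwise over the currently active colours, and once every bag intersecting the subtree of $m_{v}$ has been processed, $i_{v}$ is relabelled to a fixed ``neutral'' colour and freed for reuse.

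The colour count fits the bound because the local window of $v$ has size at most $(\Delta+1)(k+1)$ and each slot admits at most $\Delta^{2}$ distinct patterns, giving $\Delta^{2(\Delta+1)(k+1)}$ pattern classes with an overhead factor of $\Delta^{2}+2$ for the $Q$-coordinate; this yields part~(a). For part~(b), the fact that the expression is built bag-by-bag along $T$ lets $M_{1}$ inherit a tree decomposition along $T$ of width at most $(k+1)(\Delta^{2}+1)\cdot\Delta^{2(\Delta+1)(k+1)}$, where the $(\Delta^{2}+1)$ improves on $(\Delta^{2}+2)$ because the neutral colour need not be replicated at each bag position. The principal obstacle is the design of the encoding $i_{v}$: it must be rich enough to distinguish every required $G$-edge pattern in the local window of $v$ yet tight enough to fit the stated budget, while never triggering spurious edges to vertices introduced in unrelated subtrees of $T$, since $\eta_{i,j}$ acts globally on every vertex currently coloured $i$ or $j$. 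The ancestor property, together with the neutral-colour relabelling, is precisely the mechanism that will confine each $\eta$-operation to the correct subtree; making this quantitative with the claimed constants is the technical heart of the proof.
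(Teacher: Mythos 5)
Your overall strategy coincides with the paper's: build a $(Q^\circ,\ell)$-expression for $G$ (with $Q^\circ$ the reflexive closure of $Q$), drive the construction bottom-up along a tree decomposition of $M$, encode local adjacency patterns in the colour $i_v$, and then convert to an induced-product statement via \Cref{thm:hcwproduct}. A small aside: \Cref{thm:hcwproduct} is already stated for an arbitrary family $\ca H$ of reflexive loop graphs, so no ``extension from reflexive paths'' is needed.

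The genuine gap is in the mechanism that makes the edge-adding operations sound, which you acknowledge as ``the technical heart'' but do not actually supply. In the paper this is a proper colouring $s:V(Q)\to S$ of the \emph{square} $Q^2$ with $|S|=\Delta^2+1$, together with a function $p:V(M)\to\{0,\dots,k\}$ injective on every bag. Because $s$ properly colours $Q^2$, for a fixed vertex $x=[q,m]$ every $Q$-neighbour $q'$ of $q$ has a distinct $s$-value, and since $p$ is injective on the current bag, the pair $(s(q'),p(m'))$ uniquely identifies the target $[q',m']$ among the vertices that $x$ can possibly be adjacent to at the current node. This is what lets an $\eta$-operation keyed on a pair of colours reconstruct exactly the edges of $G$ (\Cref{clm:coledge}) rather than adding spurious ones. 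Your sketch says $i_v$ ``encodes, for every pair $(q',j)$ $\ldots$ the required $G$-edge pattern of $v$'' and invokes an ``ancestor property'' and ``neutral-colour relabelling'' to confine $\eta$-operations, but without $s$ and $p$ there is no reason the $\eta$-operations can be keyed correctly, since $\eta$ acts globally on all vertices of the two colours. The overhead factor ``$\Delta^2+2$ for the $Q$-coordinate'' is numerically $|S\cup\{\perp\}|$, which suggests you are implicitly aware of $s$, but the role it plays is never stated; and the $Q$-coordinate $q_v$ is already the parameter vertex of the $(Q^\circ,\ell)$-expression, so storing it again in $i_v$ would be redundant and misdescribes what is actually being stored.

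Relatedly, the colour count is stated in a form that is not clearly derivable. The paper counts, \emph{per bag position $j$} (there are $k+1$ of them), the functions $b_j:S\to\{0,1\}$ with at most $\Delta+1$ ones, bounding their number by $(|S|-1)^{\Delta+1}\le\Delta^{2(\Delta+1)}$, hence $|B_{S,k}|=|B_S|^{k+1}\le\Delta^{2(\Delta+1)(k+1)}$. Your count --- ``$(\Delta+1)(k+1)$ slots, $\Delta^2$ patterns per slot'' --- multiplies out to the same number, but it is unclear what a ``pattern'' with $\Delta^2$ possibilities would be for a slot indexed by a single $(q',j)$ pair (an edge/non-edge bit would give $2^{(\Delta+1)(k+1)}$, not $\Delta^{2(\Delta+1)(k+1)}$), so the arithmetic appears reverse-engineered from the target bound rather than derived from an encoding you have fixed. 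For part (b) the paper also does not get $M_2$ by ``inheriting a tree decomposition from $M_1$''; it builds $M_2$ directly on the vertex set $V(M)\times\Gamma'$ with $\Gamma'=S\times B_{S,k}$ and defines its edge set via the same $b_j$ mechanism, after which the width bound $(k+1)|\Gamma'|-1$ is immediate. Your explanation (``the neutral colour need not be replicated'') gestures in the right direction but again leans on the undeveloped encoding. In short: right plan, right translation via \Cref{thm:hcwproduct}, but the $Q^2$-colouring plus bag-injective $p$ --- the device that makes the expression actually produce $G$ and not a supergraph --- is missing, and the stated bound is not yet supported by a concrete encoding.
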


Specifically, for the factor $Q$ being a path, we give the following (slightly improved over the previous theorem) bounds:
\begin{theorem}[\Cref{cor:fromprods}]\label{thm:fromprods00}
Assume that a graph $G$ is a subgraph (not necessarily induced) of the strong product $G\subseteq P\boxtimes M$ where $P$ is a path 
and $M$ is a simple graph of tree-width at most~$k$. 
Then there exists a graph $M_1$ of clique-width at most $4\cdot8^{k+1}=2^{3k+5}$,
and a graph $M_2$ of tree-width at most $3(k+1)\cdot8^{k+1}$, such that $G$ is isomorphic to {\em induced} subgraphs
of each of the strong products~$P\boxtimes M_1$ and~$P\boxtimes M_2$.
\end{theorem}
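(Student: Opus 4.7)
The plan is to obtain this corollary as a specialization of \Cref{thm:fromprods0} to $Q=P$ with $\Delta=2$, but with a more careful accounting that exploits the linear structure of a path. A direct substitution of $\Delta=2$ into the general bounds of \Cref{thm:fromprods0} yields clique-width $6\cdot 64^{k+1}$ and tree-width $5(k+1)\cdot 64^{k+1}$, both weaker than the claimed $4\cdot 8^{k+1}$ and $3(k+1)\cdot 8^{k+1}$ by a factor of roughly $8^{k+1}$. This gap amounts to halving the exponent from $2(\Delta+1)(k+1)=6(k+1)$ down to $(\Delta+1)(k+1)=3(k+1)$, and to trimming the leading constants $\Delta^2+2$ and $(k+1)(\Delta^2+1)$ down to $\Delta^2$ and $(k+1)(\Delta+1)$, respectively.

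The first step is to trace the proof of \Cref{thm:fromprods0} and localize the source of the doubled exponent. I expect it to come from encoding, at each vertex of $G$, a neighbourhood profile indexed over the $\Delta+1$ relevant positions in the $Q$-column (the vertex itself together with its $\Delta$ neighbours in $Q$), where the factor $2$ in the exponent arises because in general one cannot orient the edges of $Q$ canonically and must track profiles on both sides of each vertex. The second step exploits the canonical left-to-right orientation of a path: processing the $P$-columns of $P\boxtimes M$ in order, we need only remember the neighbourhood profile relative to the upcoming column, since edges to the previous column have already been committed by earlier $\ca H$-clique-width edge-addition operations and can then be forgotten, freeing the corresponding labels for reuse. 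This single-sided accounting halves the exponent, and an analogous simplification trims the leading constants. The third step is to translate the resulting $\ca H$-clique-width bound, with $\ca H$ the class of reflexive paths, into clique-width via \Cref{thm:hcwproduct} (using that the reflexive path has clique-width $2$), and into tree-width via a direct decomposition combining a width-$1$ path decomposition of $P$ with the $(k+1)$-sized bags of the tree decomposition of $M$, scaled by the label count.

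The main obstacle will be making the one-sided sweep along $P$ rigorous while preserving the induced subgraph property. Reusing labels on widely separated $P$-columns is essential for halving the exponent, but must be done so that no edge-addition operation accidentally links non-adjacent vertices of $G$. This calls for a careful inductive construction that introduces each $P$-column, performs all edge additions between it and the preceding column, and only then relabels and discards old labels, thereby guaranteeing that at any moment the live labels refer to a single column at a time.
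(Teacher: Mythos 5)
The target numbers and the size of the gap are computed correctly, but the mechanism you propose for closing the gap is not the one the paper uses, and on closer inspection it is not clear it can be made to work at all. The paper's proof of \Cref{cor:fromprods} is a one-liner: it invokes the ``Furthermore'' clause of \Cref{thm:fromprods}, which states that if $Q^2$ is $d$-colourable then $M_1$ can be taken of clique-width at most $(d+1)\bigl(\min((d-1)^{\Delta+1},2^d)\bigr)^{k+1}$ and $M_2$ of tree-width at most $(k+1)d\bigl(\min((d-1)^{\Delta+1},2^d)\bigr)^{k+1}$, and then simply observes that the square of a path is $3$-colourable by colouring modulo $3$. Plugging $d=3$, $\Delta=2$ gives $\min(2^3,2^3)=8$, hence $4\cdot8^{k+1}$ and $3(k+1)\cdot8^{k+1}$.

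Your diagnosis of where the ``factor $2$ in the exponent'' comes from is incorrect, and this is the genuine gap. In \Cref{clm:colsize} the base-colour count is $|B_{S,k}|\le(|S|-1)^{(\Delta+1)(k+1)}$, where $S$ is the colour set of a proper colouring of $Q^2$. The general theorem uses the crude bound $|S|\le\Delta^2+1$, so the base is $|S|-1\le\Delta^2$; the ``$2$'' in $\Delta^{2(\Delta+1)(k+1)}$ is simply the square in $\Delta^2$, not a byproduct of tracking profiles on both sides of a vertex. For a path, $\chi(P^2)=3$ rather than the crude $\Delta^2+1=5$, which changes the base from $4$ to $2$ and is the entire content of the improvement. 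Your alternative mechanism---a one-sided left-to-right sweep along the $P$-columns with label reuse---does not match the proof architecture of \Cref{thm:fromprods}: the $(Q^\circ,\ell)$-expression there is built bottom-up over a tree decomposition of $M$, adding whole fibers $V(Q)\times\{m\}$ at a time, and the label count is governed by bag positions in $M$ and by the $Q^2$-colouring, not by a scan order on $P$. Reorganising the expression to sweep along $P$ would forfeit the tree-decomposition bookkeeping on $M$ that keeps the colour count bounded, so the ``obstacle'' you flag at the end is not a technical nuisance but a structural mismatch. The missing idea is simply the observation that $\chi(P^2)=3$, together with the already-proved refined bound parametrised by $d$.
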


In relation to the Planar graph product structure theorem, \Cref{thm:fromprods00}, using the tree-width bound of $6$ on $M$
given by \cite{DBLP:journals/combinatorics/UeckerdtWY22} (\Cref{thm:origprod}), implies that every planar graph is isomorphic to 
an induced subgraph of the strong product of a path and a graph of tree-width at most $2^{3\cdot6+5}=8\,388\,608$.
We improve the bound in the planar case down to~$39$:

\begin{theorem}[\Cref{thm:onlyplanar}]\label{thm:onlyplanar0}
Every simple planar graph is isomorphic to an \emph{induced} subgraph of the strong product $P\boxtimes M$ where $P$ 
is a path and $M$ is of tree-width at most~$39$.
\end{theorem}

\subsection*{Paper organisation}
Our paper is structured as follows.
In \Cref{sec:prelim}, we introduce the core definitions and new concepts (\Cref{def:hcw} and \Cref{clm:basics}).
In \Cref{sec:propehcw} and \Cref{sec:induprod}, we further study basic properties of $\ca H$-clique-width and characterise 
its relations to ordinary clique-width (\Cref{thm:bddiversity}), to local clique-width (\Cref{thm:localcw}),
to the strong product of graphs (\Cref{thm:hcwproduct}), and in parts to twin-width (\Cref{cor:Hcw-tww}).
These characterisations are provided to complete the picture of $\ca H$-clique-width, and are mostly independent of the
subsequent core product-structure oriented results.

\Cref{sec:indutrad} then proves our core results \Cref{thm:fromprods} and \Cref{cor:fromprods}
(formulated above as \Cref{thm:fromprods0} and \Cref{thm:fromprods00}, respectively).
In \Cref{sec:induplanar}, we specifically study the induced product structure for planar graphs -- \Cref{thm:onlyplanar}
(formulated above as \Cref{thm:onlyplanar0}).
We conclude with some open combinatorial questions related to the new concept of $\ca H$-clique-width in \Cref{sec:conclu}.

\section{Preliminaries}\label{sec:prelim}

We consider finite simple graphs, i.e., graphs without parallel edges or loops, but in one specific context (\Cref{def:hcw})
we allow graphs with simple edges and optional \mbox{(self-)}loops, thereafter called {loop graphs}.
Precisely, a \emph{loop graph} is a multigraph not allowing parallel edges, but allowing at most one loop per vertex.
In the context of loop graphs, we specially call a graph $G$ a {\em reflexive (loop) graph} if every vertex of $G$ has a loop.
We naturally use the terms \emph{reflexive path}, \emph{reflexive clique}, and \emph{reflexive independent set}
to denote ordinary paths, cliques, and independent sets, respectively, with loops added to all vertices.
We denote by $G[X]$ the subgraph of $G$ induced on the vertex set~$X\subseteq V(G)$,
and write $G_1\subseteq_iG_2$ to say that $G_1$ is an induced subgraph of~$G_2$.
Note that if $G_1\subseteq_iG_2$ for loop graphs, then possible loops of $G_2$ on vertices of $G_1$ are also inherited.

A graph $G$ is a {\em matching} if $G$ is simple and all vertex degrees in $G$ are $1$.
A graph $G\subseteq K_{n,n}$ is a {\em co-matching} if $G$ is obtained from $K_{n,n}$ by removing the edge set of a matching of~$n$ edges.
A graph $G$ is a {\em half-graph} if $G$ is a bipartite simple graph with the bipartition
$\{u_1,\ldots,u_n\}$ and $\{v_1,\ldots,v_n\}$, such that $u_iv_j\in E(G)$ if and only if~$i\leq j$.
A bipartite graph $G$ with a fixed bipartition $V(G)=A\cup B$ is a {\em bi-induced subgraph} of a graph $H$, 
if $G\subseteq H$ such that every edge of $H$ with one end in $A$ and the other end in $B$ is present in~$G$.
A~bipartite graph~$G$ (without fixing its bipartition) is a {\em bi-induced subgraph} of $H$, 
if there exists a bipartition $V(G)=A\cup B$ into independent sets $A,B$ such that $G$ with $(A,B)$ is a bi-induced subgraph of~$H$.
Note that a bi-induced subgraph of~$H$ is not necessarily an induced subgraph of~$H$.

\subparagraph{Graph product structure theory. }
The {\em strong product} $G_1\boxtimes G_2$ of two simple graphs is the graph $G$ on the vertex set $V(G):=V(G_1)\times V(G_2)$
such that, for any $[u,u'],\,[v,v']\in V(G)$,%
\footnote{Please note that we specifically use square brackets, such as in $[u,u']$, to refer to the vertices
of the strong product in the realm of product structure theory.
The reason is to visually emphasise the product vertices in context where other ordered tuples are used, e.g., in place of the vertex $u$ or $u'$ in $[u,u']$.}
we have $\{[u,u'],\,[v,v']\}\in E(G)$ if and only if
($uv\in E(G_1)$ and $u'v'\in E(G_2)$) or ($u=v$  and $u'v'\in E(G_2)$) or ($uv\in E(G_1)$ and $u'=v'$).

For an illustration, see \Cref{fig:strongprod},
or notice that the strong product $P\boxtimes Q$ of any two paths $P,Q$ is the square grid with diagonals.
It may be interesting to observe that, in the context of loop graphs, if both $G_1$ and $G_2$ are reflexive,
then the definition of the strong product $G_1\boxtimes G_2$ could be shortened as ``$uv\in E(G_1)$ and $u'v'\in E(G_2)$,''
and the result would be the same except that all vertices would have loops.

The origins of {graph product structure theory} go back to the aforementioned seminal paper~\cite{DBLP:journals/jacm/DujmovicJMMUW20}:
\begin{theorem}[$\!$\cite{DBLP:journals/jacm/DujmovicJMMUW20}, improved in \cite{DBLP:journals/combinatorics/UeckerdtWY22}]\label{thm:origprod}
Every planar graph is isomorphic to a subgraph of the strong product $P\boxtimes M$ where $P$ is a path and $M$ is a planar graph of tree-width at most~$6$.
\end{theorem}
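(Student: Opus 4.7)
The plan is to follow the BFS-layering scheme invented by Dujmovi\'c, Joret, Micek, Morin, Ueckerdt and Wood for the original tree-width bound~$8$, combined with the sharper interface analysis of Ueckerdt, Wood and Yi needed for the bound~$6$. First, I would reduce to the case where $G$ is a planar triangulation (the conclusion is preserved when passing to a planar supergraph, so this is without loss of generality). Fix a plane embedding, choose a root vertex $r$, and compute a BFS tree $T$ rooted at $r$, giving layers $V_0,V_1,V_2,\ldots$ with $V_0=\{r\}$. Let $P$ be a path with one vertex per nonempty BFS layer.

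The heart of the construction is to build a partition $\ca P$ of $V(G)$ into \emph{vertical paths}, each of which is a subpath of $T$ going from some vertex upwards toward $r$, hence using exactly one vertex per layer it meets and inducing a path in $G$. Given such a $\ca P$, define $M:=G/\ca P$ and map each $v\in V(G)$ to the pair (its BFS layer, its part in $\ca P$); edges of $G$ inside a single part link vertices of consecutive $P$-coordinates with equal $M$-coordinate, while edges between different parts join $M$-adjacent parts whose $P$-coordinates are either equal or consecutive, so $G\subseteq P\boxtimes M$ as required. Moreover, since $M$ arises from the planar $G$ by contracting connected subgraphs, $M$ is again planar. The entire task thus reduces to producing $\ca P$ such that the planar quotient $M=G/\ca P$ has tree-width at most~$6$.

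To build $\ca P$ together with a tree decomposition of $M$ of width~$6$, I would proceed by recursive geodesic separation. Start from the outer face of the triangulation; at each step, the current planar region is bounded by a cycle composed of a bounded number of already-selected vertical paths. Pick a BFS-geodesic path cutting this region into two smaller sub-regions, add it as a new part of $\ca P$, and recurse. The collection of parts active on the boundary of a region becomes the bag of the corresponding node in the tree decomposition of $M$; planarity together with the fact that BFS-geodesics in a triangulation intersect in very restricted ways bounds the number of simultaneously active parts.

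The hard part is this last step: the combinatorial bound that at most $7$ vertical paths are simultaneously active on the boundary of any region, giving tree-width~$6$. The naive analysis (as in the original Dujmovi\'c et al.\ argument) only yields a bound of~$9$, giving tree-width~$8$; tightening to~$7$ requires a refined choice of the cutting geodesics and a careful case analysis of how the vertical-path parts can be arranged around faces of the triangulation, carried out in~\cite{DBLP:journals/combinatorics/UeckerdtWY22}. I would invoke that analysis as a black box for the exact numerical bound, while the reduction to triangulations, the BFS layering, and the vertical-path quotient interpretation — which are the conceptually essential ingredients used throughout the rest of the present paper — are essentially routine.
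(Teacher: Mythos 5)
Theorem~\ref{thm:origprod} is \emph{cited} in this paper rather than proved; the text attributes it to \cite{DBLP:journals/jacm/DujmovicJMMUW20} with the width-$6$ improvement from \cite{DBLP:journals/combinatorics/UeckerdtWY22}, so there is no in-paper proof to compare against. Your sketch correctly captures the architecture of the proof in those references: reduction to a plane triangulation, BFS layering with one $P$-vertex per layer, a partition $\ca P$ of $V(G)$ into $T$-vertical paths, the quotient $M=G/\ca P$ (planar, since one contracts connected subgraphs of a planar graph), the injective embedding $v\mapsto(\text{layer of }v,\text{part of }v)$ showing $G\subseteq P\boxtimes M$, and the recursive cutting of regions by BFS-geodesics whose boundary paths form the tree-decomposition bags. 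You are also right that the straightforward region-counting gives bags of size $9$ (tree-width~$8$) and that the sharper interface analysis of Ueckerdt, Wood and Yi is what brings it down to $7$ paths per bag, i.e.\ tree-width~$6$; deferring that numerical refinement to the citation is appropriate here. It is worth noting that the same vertical-path/geodesic framework, further tightened to control \emph{induced} adjacency rather than mere containment, is exactly what the present paper reworks in \Cref{thm:onlyplanar} via \Cref{clm:6paths} and \Cref{clm:6recur} to obtain the induced-subgraph version with tree-width~$39$, so your account is consistent with both the cited proof and the paper's own use of the method.
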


We remark that alternative refined formulations of \Cref{thm:origprod} have been published;
most remarkably, expressing planar graphs as isomorphic to subgraphs of the strong product
$P\boxtimes K_3\boxtimes M$ where $P$ is again a path and $M$ is of tree-width at most~$3$ \cite{DBLP:journals/jacm/DujmovicJMMUW20}.
The latter formulation is of importance in several applications, such as in refining the upper bound on the queue number of planar graphs.
Furthermore, \Cref{thm:origprod} has been extended to other graph classes, such as 
to graphs embeddable on a fixed surface \cite{DBLP:journals/jacm/DujmovicJMMUW20,DBLP:conf/mfcs/KralPS24},
to powers of bounded-degree embeddable graphs and $k$-planar graphs \cite{DBLP:journals/jctb/DujmovicMW23,distel2023powers},
and to so-called $h$-framed graphs \cite{DBLP:conf/isaac/BekosLH022}.

Our goal is to enhance product-structure results with the {\em induced subgraph} containment, as formulated above in \Cref{thm:fromprods0}.
To achieve this goal, we introduce the new concept of $\ca H$-clique-width in \Cref{def:hcw} and study its properties.

\subparagraph{Width measures.}
As a traditional structural decomposition, a {\em tree decomposition} of a graph $G$ is a tuple $(T,\ca X)$ where $T$ is a tree
and $\ca X=\{X_t:t\in V(T)\}$ where $X_t\subseteq V(G)$ are called {\em bags}. The bags have to satisfy the following:
(i) $\bigcup_{t\in V(T)}X_t=V(G)$,
(ii) for every vertex $v\in V(G)$, the set of the nodes $t\in V(T)$ such that $v\in X_t$ forms a subtree in~$T$,
and (iii) for every edge $uv\in E(G)$, there exists $t\in V(T)$ such that $\{u,v\}\subseteq X_t$.
The {\em tree-width} of $G$ is the minimum of $\max_{t\in V(T)}|X_t|-1$ over all tree decompositions of~$G$.

Our work is closely related to another measure, which is a ``dense counterpart'' of tree-width.
The {\em clique-width} of a graph $G$ is the minimum integer $\ell$ such that~$G$ (irrespective of labelling)
is the value of an algebraic {\em$\ell$-expression} defined by the following operations:
\begin{itemize}
\item create a new vertex of label (colour) $i$ for some $i\in\{1,\ldots,\ell\}$;
\item take the disjoint union of two labelled graphs;
\item for $1\leq i\not=j\leq\ell$, add all (missing) edges between a vertex of label $i$ and a vertex of label~$j$;
\item for $1\leq i\not=j\leq\ell$, recolour each vertex of label $i$ to have label~$j$.
\end{itemize}

In the same direction, let the {\em local clique-width} of a graph $G$ be the integer function $\lambda$ defined as follows;
for an integer distance $r\geq1$,  $\lambda(r)$ is the maximum clique-width of the $r$-neighbourhood of a vertex $v$ in $G$, over all~$v\in V(G)$.
We say that a graph class $\ca G$ is of {\em bounded local clique-width} if there exists an integer function upper-bounding
the local clique-width of every member of~$\ca G$.
For instance, the class of grids is of bounded local clique-width, but of unbounded clique-width.

Another well-known measure is the bandwidth.
For a graph $G$, consider a linear ordering of its vertices $V(G)=(v_1,v_2,\ldots,v_n)$.
The bandwidth of this ordering is the maximum of $|i-j|$ over all $\{v_i,v_j\}\in E(G)$,
and the {\em bandwidth} of~$G$ is the minimum bandwidth over all linear orderings of~$V(G)$.
Notice that a bound on the bandwidth of a simple graph implies a bound on its maximum degree, but the converse is far from being true.

The last measure we mention, twin-width, was introduced a few years ago by Bonnet et al.\ in \cite{DBLP:journals/jacm/BonnetKTW22}.
A \emph{trigraph} is a simple graph $G$ in which some edges are marked as {\em red}, and with respect to the red edges only, 
we naturally speak about \emph{red neighbours} and \emph{red degree} in~$G$. 
For a pair of (possibly not adjacent) vertices $x_1,x_2\in V(G)$, we define a {contraction} of the pair $x_1,x_2$ as the operation
creating a trigraph $G'$ which is the same as $G$ except that $x_1,x_2$ are replaced with a new vertex $x_0$ (said to {\em stem from $x_1,x_2$}) such that:
\begin{itemize}
\item 
the (full) neighbourhood of $x_0$ in $G'$ (i.e., including the red neighbours), denoted by $N_{G'}(x_0)$,
equals the union of the neighbourhoods $N_G(x_1)$ of $x_1$ and $N_G(x_2)$ of $x_2$ in $G$ except $x_1,x_2$ themselves, that is,
$N_{G'}(x_0)=(N_G(x_1)\cup N_G(x_2))\setminus\{x_1,x_2\}$, and
\item 
the red neighbours of $x_0$, denoted here by $N_{G'}^r(x_0)$, inherit all red neighbours of $x_1$ and of $x_2$ and add those in $N_G(x_1)\Delta N_G(x_2)$,
that is, $N_{G'}^r(x_0)=\big(N_{G}^r(x_1)\cup N_G^r(x_2)\cup(N_G(x_1)\Delta N_G(x_2))\big)\setminus\{x_1,x_2\}$, where $\Delta$ denotes the~symmetric set difference.
\end{itemize}
A \emph{contraction sequence} of a trigraph $G$ is a sequence of successive contractions turning~$G$  into a single vertex,
and its {width} $d$ is the maximum red degree of any vertex in any trigraph of the sequence.
The \emph{twin-width} of a trigraph $G$ is the minimum width over all possible contraction sequences of~$G$.

\subparagraph{Introducing $\ca H$-clique-width. }
Our main contribution builds on the following new concept.

\begin{definition}[$\ca H$-clique-width]\label{def:hcw}
Let $\ca H$ be a family of loop graphs, and $\ell>0$ be an integer.
Consider {\em labels} of the form $(i,v)$ where $i\in\{1,\ldots,\ell\}$ and $v\in V(H)$ for some (fixed)~$H\in\ca H$.
\begin{enumerate}[a)]
\item For $H\in\ca H$, let an {\em$(H,\ell)$-expression} be an algebraic expression
using the following four operations on vertex-labelled graphs:
\begin{itemize}
\item creating a new vertex with a single label $(i,v)$ for some $i\in\{1,\ldots,\ell\}$ and $v\in V(H)$;
\item taking the disjoint union of two labelled graphs;
\item\label{it:Hcwedges}
for $1\leq i\not=j\leq\ell$, \emph{adding edges between $i$ and $j$}, which means to
add all edges between the vertices of label $(i,v)$ and the vertices of label $(j,w)$
over all pairs $(v,w)\in V(H)\times V(H)$ such that $vw\in E(H)$;\,%
\footnote{Notice that this condition includes the case of the same vertex $v=w$ with a loop in~$H$,
which will often be assumed to exist in our graphs~$H$.}
and
\item for $1\leq i\not=j\leq\ell$, \emph{recolouring $i$ to $j$}, which means to
relabel all vertices with label $(i,v)$ where $v\in\!V(H)$ to~label~$(j,v)$.
\end{itemize}
\smallskip
\item The \emph{$\ca H$-clique-width} $\hcw{\ca H}(G)$ of a simple graph $G$ is defined as the smallest integer $\ell$
such that (some labelling of) $G$ is the value of an $(H,\ell)$-expression for some $H\in\ca H$.
If it is not possible to build $G$ this way, then let $\hcw{\ca H}(G)=\infty$.
\end{enumerate}
Given an $(H,\ell)$-expression of value (a labelled graph)~$G$, 
we use the following terminology; the graph $H$ is the {\em parameter} of the expression,
and when referring to a label $(i,v)$ of $x\in V(G)$, the integer $i$ is the {\em colour} and $v$ the {\em parameter vertex} of~$x$.
\end{definition}
Observe that, throughout an $(H,\ell)$-expression $\varphi$ valued $G$, the colours of the vertices of $G$ may arbitrarily change by
the recolouring operations, but the parameter vertex of every $x\in V(G)$ stays the same (is uniquely determined for~$x$) in~$\varphi$.

\Cref{def:hcw} is illustrated in \Cref{fig:hcw-grid}.

It is obvious that $\ca H$-clique-width, similarly to ordinary clique-width, is monotone under taking induced subgraphs.
On the other hand, unlike ordinary clique-width, $\ca H$-clique-width does not seem to be functionally closed under taking
the complement of a graph in any reasonably general sense.
Another remark concerns the family $\ca H$ which should be generally treated as an infinite class of (finite) loop graphs,
due to $\ca H$-clique-width being asymptotically the same as ordinary clique-width in the case of finite~$\ca H$
-- see \Cref{thm:bddiversity}.

\begin{figure}[tp]
\def\xscal{0.8}
\def\yscal{0.65}
\begin{tikzpicture}[xscale=\xscal, yscale=\yscal]
  \tikzstyle{every node}=[draw, black, shape=circle, minimum size=3pt,inner sep=0.5pt, fill=gray]
  \tikzstyle{every path}=[draw, color=gray]
  \foreach \y in {4,3,2,1} {
    \draw (0,\y) node[label=left:$v_\y$] {} to[out=90,in=90] ++(0.33,0) to[out=270,in=270] ++(-0.33,0);
  } \draw (0,1)--(0,4);
  \node[draw=none,fill=none] at (0,0) {$P\in\ca H$};
  \tikzstyle{every path}=[draw, color=black, thick]\small
  \draw (3,1) node[fill=red!50!white] {$v_1$};
  \draw (3,2) node[fill=blue!30!white] {$v_2$};
  \node[draw=none,fill=none] at (2.3,0.5) {(a)};
  \draw (5,1)--(5,2);
  \draw (5,1) node[fill=red!50!white] {$v_1$};
  \draw (5,2) node[fill=blue!30!white] {$v_2$};
  \node[draw=none,fill=none] at (4.3,0.5) {(b)};
  \draw (7,1)--(7,2);
  \draw (7,1) node[fill=red!50!white] {$v_1$};
  \draw (7,2) node[fill=red!20!white] {$v_2$};
  \draw (7,3) node[fill=blue!30!white] {$v_3$};
  \node[draw=none,fill=none] at (6.3,0.5) {(c)};
  \draw (9,1)--(9,3);
  \draw (9,1) node[fill=red!50!white] {$v_1$};
  \draw (9,2) node[fill=red!20!white] {$v_2$};
  \draw (9,3) node[fill=red!50!white] {$v_3$};
  \draw (9,4) node[fill=blue!30!white] {$v_4$};
  \node[draw=none,fill=none] at (8.3,0.5) {(d)};
  \draw (11,1)--(11,4);
  \foreach \y in {4,2} \draw (11,\y) node[fill=red!20!white] {$v_\y$};
  \foreach \y in {3,1} \draw (11,\y) node[fill=red!50!white] {$v_\y$};
  \node[draw=none,fill=none] at (11,0) {$P_1$};
  \node[draw=none,fill=none] at (10.3,0.5) {(e)};
\end{tikzpicture} 

\begin{tikzpicture}[xscale=\xscal, yscale=\yscal]
  \tikzstyle{every node}=[draw, black, shape=circle, minimum size=3pt,inner sep=0.5pt, fill=gray]
  \tikzstyle{every path}=[draw, color=gray]
  \foreach \y in {4,3,2,1} {
    \draw (0,\y) node[label=left:$v_\y$] {} to[out=90,in=90] ++(0.33,0) to[out=270,in=270] ++(-0.33,0);
  } \draw (0,1)--(0,4);
  \node[draw=none,fill=none] at (0,0) {$P\in\ca H$};
  \tikzstyle{every path}=[draw, color=black, thick]\small
  \draw (3,1)--(3,4);
  \foreach \y in {4,2} \draw (3,\y) node[fill=red!20!white] {$v_\y$};
  \foreach \y in {3,1} \draw (3,\y) node[fill=red!50!white] {$v_\y$};
  \node[draw=none,fill=none] at (3,0) {$P_1$};
  \draw (4,1)--(4,4);
  \foreach \y in {4,2} \draw (4,\y) node[fill=green!25!white] {$v_\y$};
  \foreach \y in {3,1} \draw (4,\y) node[fill=green!60!lightgray] {$v_\y$};
  \node[draw=none,fill=none] at (4,0) {$P_2$};
  \node[draw=none,fill=none] at (2.1,0.5) {(f)};
  \foreach \y in {3,1} \draw (7,\y)--++(1,0);
  \draw (7,1)--(7,4);
  \foreach \y in {4,2} \draw (7,\y) node[fill=red!20!white] {$v_\y$};
  \foreach \y in {3,1} \draw (7,\y) node[fill=red!50!white] {$v_\y$};
  \node[draw=none,fill=none] at (7,0) {$P_1$};
  \draw (8,1)--(8,4);
  \foreach \y in {4,2} \draw (8,\y) node[fill=green!25!white] {$v_\y$};
  \foreach \y in {3,1} \draw (8,\y) node[fill=green!60!lightgray] {$v_\y$};
  \node[draw=none,fill=none] at (8,0) {$P_2$};
  \node[draw=none,fill=none] at (6.1,0.5) {(g)};
  \foreach \y in {4,3,2,1} \draw (11,\y)--++(1,0);
  \draw (11,1)--(11,4);
  \foreach \y in {4,2} \draw (11,\y) node[fill=red!20!white] {$v_\y$};
  \foreach \y in {3,1} \draw (11,\y) node[fill=red!50!white] {$v_\y$};
  \node[draw=none,fill=none] at (11,0) {$P_1$};
  \draw (12,1)--(12,4);
  \foreach \y in {4,2} \draw (12,\y) node[fill=green!25!white] {$v_\y$};
  \foreach \y in {3,1} \draw (12,\y) node[fill=green!60!lightgray] {$v_\y$};
  \node[draw=none,fill=none] at (12,0) {$P_2$};
  \node[draw=none,fill=none] at (10.1,0.5) {(h)};
\end{tikzpicture} 

\begin{tikzpicture}[xscale=\xscal, yscale=\yscal]
  \tikzstyle{every node}=[draw, black, shape=circle, minimum size=3pt,inner sep=0.5pt, fill=gray]
  \tikzstyle{every path}=[draw, color=gray]
  \foreach \y in {4,3,2,1} {
    \draw (0,\y) node[label=left:$v_\y$] {} to[out=90,in=90] ++(0.33,0) to[out=270,in=270] ++(-0.33,0);
  } \draw (0,1)--(0,4);
  \node[draw=none,fill=none] at (0,0) {$P\in\ca H$};
  \tikzstyle{every path}=[draw, color=black, thick]\small
  \foreach \y in {4,3,2,1} \draw (3,\y)--++(1,0);
  \draw (3,1)--(3,4);
  \foreach \y in {4,2} \draw (3,\y) node[fill=blue!30!white] {$v_\y$};
  \foreach \y in {3,1} \draw (3,\y) node[fill=blue!30!white] {$v_\y$};
  \node[draw=none,fill=none] at (3,0) {$P_1$};
  \draw (4,1)--(4,4);
  \foreach \y in {4,2} \draw (4,\y) node[fill=green!25!white] {$v_\y$};
  \foreach \y in {3,1} \draw (4,\y) node[fill=green!60!lightgray] {$v_\y$};
  \node[draw=none,fill=none] at (4,0) {$P_2$};
  \draw (5,1)--(5,4);
  \foreach \y in {4,2} \draw (5,\y) node[fill=red!20!white] {$v_\y$};
  \foreach \y in {3,1} \draw (5,\y) node[fill=red!50!white] {$v_\y$};
  \node[draw=none,fill=none] at (5,0) {$P_3$};
  \node[draw=none,fill=none] at (2.1,0.5) {(i)};
  \foreach \y in {4,3,2,1} \draw (7,\y)--++(2,0);
  \draw (7,1)--(7,4);
  \foreach \y in {4,2} \draw (7,\y) node[fill=blue!30!white] {$v_\y$};
  \foreach \y in {3,1} \draw (7,\y) node[fill=blue!30!white] {$v_\y$};
  \node[draw=none,fill=none] at (7,0) {$P_1$};
  \draw (8,1)--(8,4);
  \foreach \y in {4,2} \draw (8,\y) node[fill=green!25!white] {$v_\y$};
  \foreach \y in {3,1} \draw (8,\y) node[fill=green!60!lightgray] {$v_\y$};
  \node[draw=none,fill=none] at (8,0) {$P_2$};
  \draw (9,1)--(9,4);
  \foreach \y in {4,2} \draw (9,\y) node[fill=red!20!white] {$v_\y$};
  \foreach \y in {3,1} \draw (9,\y) node[fill=red!50!white] {$v_\y$};
  \node[draw=none,fill=none] at (9,0) {$P_3$};
  \node[draw=none,fill=none] at (6.1,0.5) {(j)};
  \foreach \y in {4,3,2,1} \draw (11,\y)--++(2,0);
  \draw (11,1)--(11,4);
  \foreach \y in {4,2} \draw (11,\y) node[fill=blue!30!white] {$v_\y$};
  \foreach \y in {3,1} \draw (11,\y) node[fill=blue!30!white] {$v_\y$};
  \node[draw=none,fill=none] at (11,0) {$P_1$};
  \draw (12,1)--(12,4);
  \foreach \y in {4,2} \draw (12,\y) node[fill=blue!30!white] {$v_\y$};
  \foreach \y in {3,1} \draw (12,\y) node[fill=blue!30!white] {$v_\y$};
  \node[draw=none,fill=none] at (12,0) {$P_2$};
  \draw (13,1)--(13,4);
  \foreach \y in {4,2} \draw (13,\y) node[fill=red!20!white] {$v_\y$};
  \foreach \y in {3,1} \draw (13,\y) node[fill=red!50!white] {$v_\y$};
  \node[draw=none,fill=none] at (13,0) {$P_3$};
  \draw (14,1)--(14,4);
  \foreach \y in {4,2} \draw (14,\y) node[fill=green!25!white] {$v_\y$};
  \foreach \y in {3,1} \draw (14,\y) node[fill=green!60!white] {$v_\y$};
  \node[draw=none,fill=none] at (14,0) {$P_4$};
  \node[draw=none,fill=none] at (10.1,0.5) {(k)};
\end{tikzpicture} 

\begin{tikzpicture}[xscale=\xscal, yscale=\yscal]
  \tikzstyle{every node}=[draw, black, shape=circle, minimum size=3pt,inner sep=0.5pt, fill=gray]
  \tikzstyle{every path}=[draw, color=gray]
  \foreach \y in {4,3,2,1} {
    \draw (0,\y) node[label=left:$v_\y$] {} to[out=90,in=90] ++(0.33,0) to[out=270,in=270] ++(-0.33,0);
  } \draw (0,1)--(0,4);
  \node[draw=none,fill=none] at (0,0) {$P\in\ca H$};
  \tikzstyle{every path}=[draw, color=black, thick]\small
  \foreach \y in {4,3,2,1} \draw (3,\y)--++(5,0);
  \draw (3,1)--(3,4);
  \foreach \y in {4,2} \draw (3,\y) node[fill=blue!30!white] {$v_\y$};
  \foreach \y in {3,1} \draw (3,\y) node[fill=blue!30!white] {$v_\y$};
  \node[draw=none,fill=none] at (3,0) {$P_1$};
  \draw (4,1)--(4,4);
  \foreach \y in {4,2} \draw (4,\y) node[fill=blue!30!white] {$v_\y$};
  \foreach \y in {3,1} \draw (4,\y) node[fill=blue!30!white] {$v_\y$};
  \node[draw=none,fill=none] at (4,0) {$P_2$};
  \draw (5,1)--(5,4);
  \foreach \y in {4,2} \draw (5,\y) node[fill=blue!30!white] {$v_\y$};
  \foreach \y in {3,1} \draw (5,\y) node[fill=blue!30!white] {$v_\y$};
  \node[draw=none,fill=none] at (5,0) {$P_3$};
  \draw (6,1)--(6,4);
  \foreach \y in {4,2} \draw (6,\y) node[fill=blue!30!white] {$v_\y$};
  \foreach \y in {3,1} \draw (6,\y) node[fill=blue!30!white] {$v_\y$};
  \node[draw=none,fill=none] at (6,0) {$P_4$};
  \draw (7,1)--(7,4);
  \foreach \y in {4,2} \draw (7,\y) node[fill=blue!30!white] {$v_\y$};
  \foreach \y in {3,1} \draw (7,\y) node[fill=blue!30!white] {$v_\y$};
  \node[draw=none,fill=none] at (7,0) {$P_5$};
  \foreach \y in {4,3,2,1} \draw[semithick,densely dotted] (8,\y)--++(1,0);
  \draw (8,1)--(8,4);
  \foreach \y in {4,2} \draw (8,\y) node[fill=green!25!white] {$v_\y$};
  \foreach \y in {3,1} \draw (8,\y) node[fill=green!60!white] {$v_\y$};
  \node[draw=none,fill=none] at (8,0) {$P_6$};
  \draw (9,1)--(9,4);
  \foreach \y in {4,2} \draw (9,\y) node[fill=red!20!white] {$v_\y$};
  \foreach \y in {3,1} \draw (9,\y) node[fill=red!50!white] {$v_\y$};
  \node[draw=none,fill=none] at (9,0) {$P_7$};
  \foreach \y in {4,3,2,1} \draw[thick,loosely dotted] (9.5,\y)--++(1,0);
  \node[draw=none,fill=none] at (2.1,0.5) {(l)};
\end{tikzpicture} 
\caption{A $(P,5)$-expression making an $a\times b$ square grid, where $P$ is a $b$-vertex loop path:\newline
Left: The loop path $P\in\ca H$ with the parameter vertices $v_1,v_2,\ldots,v_b$ (here~$b=4$).
Right: Making the grid with a $(P,5)$-expression, left-to-right and top-to-bottom in order (a)--(l), as follows.\newline
(a) Create two vertices labelled $(1,v_1)$ (red) and $(5,v_2)$ (blue).
(b) Add edges from red to blue. (c)~Recolour $(5,v_2)$ to $(2,v_2)$ (light red) and add a new vertex labelled $(5,v_3)$ (blue).
(d) Add edges from light red to blue, then recolour blue and add a new vertex labelled $(5,v_4)$ (blue).
(e) Add edges from red to blue again, and then recolour blue to light red, finishing a copy $P_1$ of the path $P$.\newline
(f) Analogously create a copy $P_2$ of $P$, coloured alternately green and light green.
(g) Add edges from red to green -- note that this creates only the two ``horizontal'' edges because of~$P$.
(h) Analogously add edges from light red to light green.
(i) Recolour both red and light red to blue, and then add a copy $P_3$ of $P$ coloured red and light red.
(j) Add edges from green to red and from light green to light red, as before.
(k) Analogously recolour both green and light green to blue, and then add a copy $P_4$ of $P$ coloured green and light green.
(l) Continue this construction up to desired size.}
\label{fig:hcw-grid}
\end{figure}

To further briefly illustrate \Cref{def:hcw}, we add few more easy observations:
\begin{claim}\label{clm:basics}
\begin{enumerate}[a)]
\item\label{it:singlev} If $\ca H=\{K_1\}$, then $\hcw{\ca H}(G)<\infty$ if and only if $G$ has no edges.
If $\ca H=\{K_1^\circ\}$, where $K_1^\circ$ stands for a single vertex with a loop, then $\hcw{\ca H}(G)=\cw(G)<\infty$.
\item If $\ca H$ contains a loop graph with at least one loop, then $\hcw{\ca H}(G)\leq\cw(G)$.
\item\label{it:H2} If $\ca H=\{H\}$, then $\hcw{\ca H}(H_1)\leq2$ holds for every simple graph $H_1$
obtained from an induced subgraph of $H$ by removing loops.
\item\label{it:Hcompo} If $\ca H=\{H\}$ and $H$ is disconnected, then for every connected simple graph $G$ we have 
$\hcw{\ca H}(G)=\hcw{\{H_0\}}(G)$ for some connected component $H_0$ of~$H$.
\item\label{it:homom}
If $\ca H=\{K_2\}$ (no loops), then $\hcw{\ca H}(G)<\infty$ if and only if $G$ is a simple bipartite graph.
More generally, for any $\ca H$, we have $\hcw{\ca H}(G)<\infty$ for a  simple graph $G$, if and only if $G$ has a homomorphism into some $H\in\ca H$.
\item\label{it:nphk}
For any $k\geq3$ and $\ca H=\{K_k\}$, it is NP-hard to decide whether $\hcw{\ca H}(G)<\infty$.
\item\label{it:grids}
Let $\ca H$ be a family containing arbitrarily long reflexive paths.
If $G$ is any square grid, then $\hcw{\ca H}(G)\leq5$ while $\cw(G)$ is generally unbounded.
\end{enumerate}
\end{claim}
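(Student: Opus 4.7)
The plan is to unify all seven parts around a single structural observation: in any $(H,\ell)$-expression $\varphi$ whose value is a simple graph $G$, the parameter vertex $v_x$ of each $x\in V(G)$ is fixed throughout $\varphi$, and an edge $xy$ of $G$ can be created only when $v_xv_y\in E(H)$ (allowing $v_x=v_y$ via a loop at $v_x$). Hence the map $x\mapsto v_x$ is always a homomorphism from the simple graph $G$ to the loop graph $H$. This principle immediately settles (a): no edges can appear when $H=K_1$ has no edge, while for $H=K_1^\circ$ the parameter data is trivial and edge-addition reduces to the ordinary clique-width operation, giving $\hcw{\{K_1^\circ\}}(G)=\cw(G)$. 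Part (b) is the mirror: if some $H\in\ca H$ has a loop at $v_0$, I simulate any clique-width expression for $G$ by always using parameter vertex $v_0$, so $\hcw{\ca H}(G)\le\cw(G)$. Part (d) also follows from the observation: connectivity of $G$ forces the image of the parameter-vertex map to lie in a single component $H_0$ of~$H$, so the expression is literally an $(H_0,\ell)$-expression.

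For (c), I enumerate $V(H_1)=\{x_1,\ldots,x_n\}$ with distinct parameter vertices $v_1,\ldots,v_n$ in $V(H)$; create $x_1$ with label $(1,v_1)$, and iteratively for $i\ge 2$ create $x_i$ with label $(2,v_i)$, apply \emph{add edges between $1$ and $2$} (which adds precisely the edges of $H_1$ from $x_i$ to earlier $x_j$, since the $v_i$ are distinct so loops are irrelevant), and then recolour $2\to 1$. This uses two colours. For the nontrivial direction of (e), given a homomorphism $h:G\to H$, I give each $x_i$ its own colour $i$ and parameter vertex $h(x_i)$; then for every edge $x_ix_j\in E(G)$ a single \emph{add edges between $i$ and $j$} introduces exactly that edge, since $h(x_i)h(x_j)\in E(H)$ by the homomorphism property and distinct colours isolate the effect. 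This yields the crude but finite bound $\hcw{\{H\}}(G)\le|V(G)|$, completing the characterisation. Part (f) is then an immediate reduction: by (e), finiteness for $\ca H=\{K_k\}$ equals $k$-colourability of $G$, which is NP-hard for $k\ge 3$.

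For (g), I follow the construction indicated in \Cref{fig:hcw-grid}: build the grid column-by-column, each column being a copy of the reflexive path $P$ with parameter vertices $v_1,\ldots,v_b$. During the building of the current column I use two colours alternating by the parity of the position along $P$; I keep two further alternating colours on the just-completed previous column. The reflexive loops of $P$ let \emph{add edges between} matching-parity colours realise the horizontal grid edges exactly (equal parameter vertices, via the loop), while the parity gap forbids unwanted diagonals (since adjacent positions in $P$ have opposite parities and non-adjacent positions in $P$ are at distance $\ge 2$). Once the horizontal edges have been added, the previous column is recoloured into a single archival colour, freeing its two colours for the next column; this totals $2+2+1=5$ colours. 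The unboundedness of $\cw$ on square grids is classical and needs no re-proof.

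The main obstacle I anticipate is the parity-alternation bookkeeping in (g): it is crucial that reflexive loops realise equal-parameter cross-column edges while the alternating-parity colouring, together with consecutive positions in $P$ having opposite parities, blocks every diagonal. All remaining parts are routine consequences of the parameter-vertex homomorphism principle and small gadget constructions.
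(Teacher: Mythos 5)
Your proposal is correct and takes essentially the same approach as the paper: the parameter-vertex-as-homomorphism observation underlies parts (a)--(f), and part (g) follows the same column-by-column grid construction with an archival colour. The only deviation is a slight simplification in (g) -- you build each column using just 2 alternating colours (creating all of its vertices first and adding the path edges in a single operation), whereas the paper builds a column iteratively with a third temporary colour -- but both fit inside the 5-colour budget.
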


\begin{proof}
a) There is no edge in $K_1$, and so \Cref{def:hcw} cannot create an edge in~$G$.
On the other hand, $(K_1^\circ,\ell)$-expressions in \Cref{def:hcw} exactly coincide with traditional $\ell$-expressions of clique-width
(replacing every label $i$ with $(i,v)$ where $\{v\}=V(K_1^\circ)$).

b) We pick $v\in V(H)$ for $H\in\ca H$ such that $v$ has a loop in~$H$.
Then, in an ordinary $\cw(G)$-expression for $G$, we replace every label $i$ with $(i,v)$ to get an $(H,\cw(G))$-expression for $G$,
similarly to part a).

\ref{it:H2}) We simply make an $(H,2)$-expression for $H_1$ as follows;
in an arbitrary order $V(H_1)=\{v_1,\ldots,v_n\}$ of the vertices, for $k=1,\ldots,n$, we add a new vertex labelled $(2,v_k)$,
add edges between $1$ and $2$, and recolour $2$ to $1$. This creates exactly the non-loop edges of~$H_1$.

\ref{it:Hcompo}) 
This follows from the facts that the recolouring operation of \Cref{def:hcw} does not allow to change the initially assigned
parameter vertex of $H$, and hence every edge of $G$ created within an $(H,\ell)$-expression has a preimage edge in~$H$.
So, an expression creating connected $G$ may only use parameter vertices of a connected component of~$H$.

\ref{it:homom}) 
Considering the previous argument turned around, 
every edge created within an $(H,\ell)$-expression has a unique homomorphic image in~$H$ (possibly a loop).
In the opposite direction, for a homomorphism $h:G\to H\in\ca H$, we make an $(H,|V(G)|)$-expression starting with 
the disjoint union of vertices labelled $(i_x,h(x))$ for all $x\in V(G)$ where $i_x\not=i_y$ for $x\not=y$, and then
simply add the edges of $G$ one by one using the colours (i.e.,~$i_x$).

\ref{it:nphk})
By \ref{it:homom}), we have $\hcw{\ca H}(G)<\infty$ if and only if $G$ is $k$-colourable.

\ref{it:grids}) Let $G$ be an $a\times b$ grid, i.e.,~$|V(G)|=ab$.
We pick $P\in\ca H$ such that $P$ is a reflexive path of length $|V(P)|\geq b$, 
choose a consecutive subpath of $P$ on vertices $\{v_1,\ldots,v_b\}\subseteq V(P)$ in this order,
and construct an $(H,5)$-expression valued $G$ as follows.

Similarly to \ref{it:H2}), we define an $(H,3)$-(sub)expression creating a ``vertical'' copy $P_1$ of the path on $b$ vertices,
but now using three colours $1,2,5$ such that the resulting labels of $P_1$ are with alternating colours $1$ and $2$,
precisely as $(1,v_1)$, $(2,v_2)$, $(1,v_3)$, $(2,v_4),\ldots$.
We likewise create a copy $P_2$ of the same path using colours $3,4,5$ such that the result has labels
with alternating colours as $(3,v_1)$, $(4,v_2)$, $(3,v_3)$, $(4,v_4),\ldots$.
Then we make a disjoint union and add edges between colours $1,3$ and between $2,4$ --
this creates precisely the ``horizontal'' edges between the labels $(1,v_i)$ and $(3,v_i)$, and between $(2,v_{i+1})$ and $(4,v_{i+1})$,
for $i=1,3,\ldots$.
In a subsequent round, we recolour colours $1,2$ to $5$ (this concerns only~$P_1$), and continue an analogous process with
adding a path $P_3$ with alternating colours again $1$ and $2$, and adding the ``horizontal'' edges.
After $a-1$ rounds, we build the desired $a\times b$ square grid~$G$.

This construction is illustrated for $b=4$ in \Cref{fig:hcw-grid}.
\end{proof}

\section{Properties of $\ca H$-Clique-Width}
\label{sec:propehcw}

We first characterise the asymptotic difference between ordinary clique-width and $\ca H$-clique-width
for families of loop graphs $\ca H$.

We recall the concept of neighbourhood diversity by Lampis \cite{DBLP:journals/algorithmica/Lampis12}.
Two vertices $x,y$ of a simple graph $G$ are of the same {\em neighbourhood type} 
if and only if they have the same set of neighbours in $V(G)\setminus\{x,y\}$.
The {\em neighbourhood diversity} of $G$ is the least integer $d$ for which $V(G)$ can be partitioned into $d$ parts such that
every pair in the same part has the same neighbourhood type.

We actually use an adjusted version of this concept.
Two vertices $x,y$ of a simple loop graph $G$ are of the same {\em total neighbourhood type}
if and only if they have the same set of neighbours in~$V(G)$ when $x$ counts as a neighbour of $x$ if there is a loop on~$x$
(and likewise with~$y$).
A loop graph $G$ is of {\em total neighbourhood diversity} at most $d$ if $V(G)$ can be partitioned into $d$ parts such that
every pair in the same part has the same total neighbourhood~type.

The slight, but very important in our context, difference of these two notions in the presence of loops can be observed, e.g., on:
loopless cliques $K_n$ (neighbourhood diversity $1$ and total neighbourhood diversity~$n$)
vs.\ reflexive cliques $K_n^\circ$ (total neighbourhood diversity~$1$),
or loopless stars $K_{1,n}$ (both neighbourhood diversities equal~$2$) vs.\
reflexive stars $K_{1,n}^\circ$ (total neighbourhood diversity~$n$).

A loop graph class $\ca G$ is of {\em component-bounded} total neighbourhood diversity if there exists an integer
$d$ such that each connected component of every graph of $\ca G$ is of total neighbourhood diversity at most~$d$.

\begin{theorem}\label{thm:bddiversity}
Let $\ca H$ be a family of loop graphs.
There exists a function $f$ such that, $\cw(G)\leq f(\hcw{\ca H}(G))$ holds for all simple graphs $G$,
if and only if $\ca H$ is of component-bounded total neighbourhood diversity.
\end{theorem}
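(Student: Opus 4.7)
The plan is to prove the two directions of the equivalence separately.

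For the ``if'' direction, I would assume that every connected component of every $H\in\ca H$ has total neighbourhood diversity at most $d$ and then show that $\cw(G)\leq d\cdot\hcw{\ca H}(G)$, so that one may take $f(x)=d\,x$. Starting from an optimal $(H,\ell)$-expression $\varphi$ for $G$, by \Cref{clm:basics}\ref{it:Hcompo} I may restrict attention to a single connected component of $H$, which by hypothesis partitions into $d$ total-type classes $V_1,\ldots,V_d$. I then simulate $\varphi$ by an ordinary cw-expression $\varphi'$ over the refined label set $[\ell]\times[d]$: a vertex originally labelled $(i,v)$ with $v\in V_k$ becomes cw-labelled $(i,k)$, each recolouring of $i$ to $j$ in $\varphi$ expands to $d$ cw-recolourings (one per class), and each ``add edges between $i$ and $j$'' step in $\varphi$ expands to the cw-operations ``add edges between $(i,k)$ and $(j,k')$'' for every pair $(k,k')$ such that some edge of $H$ connects $V_k$ to $V_{k'}$. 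The crux is the correctness of this expansion: because all vertices within a class share the same (loop-adjusted) neighbourhood in $H$, the bipartite pattern between $V_k$ and $V_{k'}$ is either complete or empty, which is exactly what the ordinary cw edge-addition imposes.

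For the converse direction I would argue contrapositively, assuming $\ca H$ has components of unbounded total neighbourhood diversity and producing a sequence of simple graphs $G_n$ with $\hcw{\ca H}(G_n)$ uniformly bounded yet $\cw(G_n)\to\infty$. In the canonical sub-case where $\ca H$ already supplies arbitrarily long reflexive paths as induced subgraphs of its components, the natural witnesses are square grids: \Cref{clm:basics}\ref{it:grids} immediately yields $\hcw{\ca H}(G_n)\leq5$ for the $n\times n$ grid $G_n$, while the clique-width of grids is well known to be unbounded in $n$. For a general $\ca H$, my plan is to pick $H_n\in\ca H$ with a component $C_n$ of total diversity at least $n$, and then to build $G_n$ as an induced subgraph of the strong product $C_n\boxtimes Q$ for a small fixed auxiliary graph $Q$, using a $(C_n,O(1))$-expression analogous to the grid construction and designed so that the many distinct type classes of $C_n$ play the role of independent ``columns'' forcing $\cw(G_n)$ to grow with $n$.

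The main obstacle is precisely this last step for arbitrary $\ca H$: unlike reflexive paths, a loop graph of large total neighbourhood diversity need not carry any convenient backbone (such as a long induced reflexive path), so the construction has to be steered purely by the combinatorial presence of many pairwise distinct type classes. I expect the argument to proceed via a Ramsey-style extraction inside $C_n$ of a structured substructure on representative vertices (a half-graph, a matching, an antimatching, or a mixed loop analogue) together with a careful expression that converts that combinatorial dimension into a clique-width lower bound on $G_n$.
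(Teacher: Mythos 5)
Your ``if'' direction is correct and follows the paper's argument exactly: reduce to a single connected component of some $H\in\ca H$ via \Cref{clm:basics}\,\ref{it:Hcompo}), observe that within each total-neighbourhood-type class the adjacency to any other class (including its own) is uniform, and hence the parameter vertex in a label $(i,v)$ can be replaced by the class index $k$ of $v$, yielding an ordinary $d\ell$-expression. The paper phrases this as ``replace the parameter vertices of $H_0$ by $d$ new colours'' but the content is the same.

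For the ``only if'' direction, your high-level plan also matches the paper's: argue the contrapositive, use Ramsey plus a bipartite half-graph/matching/antimatching extraction (the paper cites Ding--Oporowski--Oxley--Vertigan, its \Cref{clm:threematch}) to pull out one of five canonical structures inside a high-diversity component --- a large reflexive independent set, a large loopless clique, or a bi-induced matching, antimatching, or half-graph --- and then build a witness graph with bounded $\ca H$-clique-width but clique-width $\Omega(k)$. However, this is where your proposal stops short: the crucial construction and its lower bound are exactly what you leave unfinished, and they are not a formality. The paper isolates this as a separate result (\Cref{lem:highcwc}): it does \emph{not} take an induced subgraph of a strong product $C_n\boxtimes Q$, as you float, but instead builds $G_k$ from $k$ disjoint loopless copies of the extracted $H_1$, joining consecutive copies according to the canonical pattern $C(i,j)$ (so that the ``columns'' are copies of $H_1$ and the ``rows'' are the extracted representatives), using only $\ca O(1)$ colours and parameter vertices of $H_1$. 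The clique-width lower bound is then a balanced-separation argument: any $\ell$-expression has a sub-expression capturing between a third and two-thirds of the vertices, and one shows that consecutive columns can disagree across the split in at most $3\ell$ rows while at most $\ell$ columns can themselves straddle the split (each such column being a connected copy of $H_1$ requires a ``private'' colour), which forces $\ell=\Omega(k)$. Without a concrete construction together with such a separation argument, the ``only if'' direction is a genuine gap; your honest flag of the obstacle is accurate, and closing it is precisely the substance of the paper's proof.
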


\begin{proof}
In the `$\Leftarrow$' direction, we may assume $G$ is connected since we will later take the maximum over connected components.
By \Cref{clm:basics}\,\ref{it:Hcompo}), $\hcw{\ca H}(G)=\hcw{\{H_0\}}(G)=\ell$ for a connected component $H_0$ of some~$H\in\ca H$.
The total neighbourhood diversity of $H_0$ is at most some constant~$d$, by the theorem assumption.
Then, in an $(H_0,\ell)$-expression for~$G$, we may equivalently replace the parameter vertices of $H_0$ by $d$ new colours,
giving a $d\ell$-expression for $G$. So, $\cw(G)\leq d\cdot\hcw{\ca H}(G)$.

A proof of the `$\Rightarrow$' direction is based on the following natural technical claim:
\begin{claim}[Ding et al.~{\cite[Corollary~2.4]{DBLP:journals/jct/DingOOV96}}]\label{clm:threematch}
For every $k$ there exists $m$ such that the following holds.
If $F$ is a bipartite simple graph with the bipartition $V(F)=A\cup B$, $|A|\geq m$ and the vertices
of $A$ have pairwise different neighbourhood types (in~$B$), then $F$ contains an induced subgraph
isomorphic to one of the following graphs on $2k$ vertices: a matching, a co-matching, or a half-graph.
\end{claim}

Having \Cref{clm:threematch} at hand, we continue as follows.
Assume that $\ca H$ is {\em not} of component-bounded total neighbourhood diversity.
Let $H\in\ca H$ (or a component thereof) be a connected loop graph of total neighbourhood diversity at least~$c_1$,
and $C\subseteq V(H)$ be vertices representing these $c_1$ total neighbourhood types.
By Ramsey's theorem, for sufficiently large $c_1$ we find a subset $C_1\subseteq C$, $|C_1|=2c_2-1$,
such that $C_1$ induces a clique or an independent set in~$H$,
and then we can select $C_2\subseteq C_1$, $|C_2|=c_2$ such that either all vertices of $C_2$ have loops, or none has.
We have got one of the two possibilities:
\begin{itemize}
\item $C_2$ is a reflexive independent set or a loopless clique in~$H$.
\item Or, all vertices of $C_2$ have the same total neighbourhood type in $C_2$ (empty or full~$C_2$), and so
they have pairwise different neighbourhood types in $D:=V(H)\setminus C_2$.
Consequently, we may apply \Cref{clm:threematch} to the bipartite subgraph ``between'' $C_2$ and $D$.
\end{itemize}
Regarding the second point, \Cref{clm:threematch} hence says that one of the three claimed kinds of subgraphs is bi-induced in~$H$.

Altogether, for every $k$ and sufficiently large $c_1$ depending on $k$,
we have connected $H\in\ca H$ containing one of the five mentioned substructures;
an induced reflexive independent set or an induced loopless clique on $k$ vertices, 
or a bi-induced matching, a bi-induced co-matching, or a bi-induced half-graph on $2k$ vertices.
In each of these five cases, we can construct a ``grid-like'' graph of bounded $\ca H$-clique-width
whose ordinary clique-width grows linearly with~$k$.
This is provided by subsequent \Cref{lem:highcwc}, in which one can easily check that its assumptions cover 
all five cases of $H\in\ca H$ listed in this proof (note that a reflexive independent set and a loopless clique arise with $A=B$ in the lemma).
\end{proof}

\begin{lemma}\label{lem:highcwc}
Let $k\geq3$ be an integer, and $H_1$ be a loop graph satisfying the following:
\begin{itemize}
\item $H_1$ is connected.
\item There exist sets $A,B\subseteq V(H_1)$, $|A|=|B|=k$, such that either $A=B$, or~$A\cap B=\emptyset$.
\item We can write $A=\{u_1,\ldots,u_k\}$ and $B=\{u'_1,\ldots,u'_k\}$ such that, for some of the following three conditions
on integers $C(i,j)\in\{$`\,$i<j$', `\,$i=j$', `\,$i\not=j$'$\}$ we have;
for all $(i,j)\in\{1,\ldots,k\}^2$, $\{u_i,u'_j\}\in E(H_1)$ if and only if $C(i,j)$ is false.
(Note that, if $A=B$, we assume~$u_i=u'_i$ and deal also with loops, and the condition `\,$i<j$' is impossible then.)
\end{itemize}
Then the class of graphs of $\{H_1\}$-clique-width at most~$3$ has ordinary clique-width of order~$\Omega(k)$.
\end{lemma}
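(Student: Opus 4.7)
The plan is to argue by case analysis over the three possible values of $C(i,j)$ combined with whether $A = B$ or $A \cap B = \emptyset$, giving the five non-trivial cases that correspond to the five substructure types featuring in the proof of \Cref{thm:bddiversity}: a loopless $k$-clique on $H_1[A]$, a reflexive independent set of size $k$ on $H_1[A]$, a bi-induced $k$-matching, a bi-induced $k$-antimatching, and a bi-induced $k$-half-graph between $A$ and $B$. For each case I would exhibit an explicit family $\{G_k\}_{k \ge 3}$ of graphs satisfying $\hcw{\{H_1\}}(G_k) \le \ell_0$ for an absolute constant $\ell_0$ and $\cw(G_k) = \Omega(k)$. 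The canonical target for $G_k$ is the $k \times k$ square grid (or a bounded subdivision, or a bipartite/dominance-type analogue in the asymmetric cases), all of which have ordinary clique-width $\Theta(k)$ by the classical Golumbic--Rotics lower bound and are stable under bounded subdivision.

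In each case the $(H_1, \ell_0)$-expression for $G_k$ would be constructed iteratively, in the spirit of \Cref{clm:basics}\,\ref{it:grids}: building $G_k$ one row or column at a time, with a constant number of ``live'' colours together with a constant number of ``dead'' colours used to deactivate earlier vertices from the scope of later operations. The hypothesis on $C(i,j)$ specifies exactly which pairs of parameter vertices $u_i, u'_j$ are adjacent in $H_1$, and hence which edges a given \emph{add-edges} step will create: the three bi-induced cases directly furnish an $A$--$B$ pattern (respectively a matching, an antimatching, and a triangular pattern) that I would use to wire up a bipartite grid with sides labelled by $A$ and $B$; the reflexive-independent-set case relies on the loops $u_i u_i$ to produce same-parameter edges; and the loopless-clique case exploits that every pair of distinct $A$-parameters is adjacent, to be harnessed by a careful cycling of row-parameters that realises the grid edges while using colours to suppress diagonals.

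The main obstacle is to ensure that each \emph{add-edges} step creates only the intended edges of $G_k$, given that $H_1$-adjacencies among parameter pairs outside the prescribed substructure are left unspecified by the hypothesis. I would handle this by restricting every \emph{add-edges} step to colour pairs whose participating vertices carry only parameters whose pairwise adjacency in $H_1$ is fully determined by the hypothesis (in particular, only using $A$--$B$ interactions in the bi-induced cases), and by employing the ``dead'' colours to remove earlier vertices from the scope of subsequent steps. The reflexive-independent-set case is the most delicate, because $A$ alone permits no cross-parameter edges; here I would invoke connectivity of $H_1$ to obtain an auxiliary vertex $w \in V(H_1) \setminus A$ adjacent to at least some of $A$, and use $w$-labelled subdivision vertices to bridge adjacent grid vertices, which still keeps $\ell_0$ constant. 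With each $G_k$ then containing a $k \times k$ (possibly subdivided) grid-like structure, the ordinary clique-width lower bound $\cw(G_k) = \Omega(k)$ follows from the grid lower bound, completing the argument.
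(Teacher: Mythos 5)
Your plan differs from the paper's proof in two structural ways, and one of them creates a real gap.

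\textbf{What the paper actually does.}
The paper gives a \emph{single uniform construction} with no case split: it builds $G_k$ as $k$ pairwise-disjoint loopless copies $G'_1,\dots,G'_k$ of the \emph{whole} graph $H_1$ (so every intra-column adjacency is whatever $H_1$ dictates; no need to control it), assigns colours only to the copies of $A$ and $B$ inside each column (everything else gets a ``dead'' colour $0$), and then joins consecutive columns $a$ and $a+1$ by a single add-edges step, so the inter-column edge pattern between rows $i$ and $j$ is exactly the $C(i,j)$ pattern given by the hypothesis. This uses a constant number of colours by construction. The lower bound $\cw(G_k)=\Omega(k)$ is then established not via the grid theorem but by a direct balanced-subexpression counting argument: take a subexpression making a set $X$ with $\frac13|V(G_k)|\le|X|\le\frac23|V(G_k)|$, observe that (because the $C(i,j)$ pattern distinguishes rows) any two consecutive columns can differ on $X$ in at most $O(\ell)$ rows, and that at most $\ell$ columns can straddle both $X$ and $\bar X$ (since each column is a connected copy of $H_1$), and derive a contradiction when $\ell=o(k)$.

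\textbf{Where your plan breaks.}
You identify, correctly, that the hard part of your plan is ``to ensure that each add-edges step creates only the intended edges of $G_k$'', but in the dense cases this is not a nuisance to be handled — it is fatal for a grid target. Consider the loopless-clique case ($A=B$, $C(i,j)$ is `$i=j$', so $u_iu_j\in E(H_1)$ iff $i\ne j$), or the antimatching case ($A\cap B=\emptyset$, same $C$). Here \emph{every} add-edges step between two colour classes that each carry $\ge 2$ distinct parameter vertices necessarily produces a near-complete bipartite graph between them (all pairs except the one or two same-index pairs). There is no way to carve the sparse edge set of a $k\times k$ grid out of this with $O(1)$ colours: to realise a matching-like set of $k$ grid edges you would need each live colour class to be monochromatic in parameter, i.e.\ $\Theta(k)$ colours per live column. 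Your ``careful cycling of row-parameters that realises the grid edges while using colours to suppress diagonals'' is not worked out and, as far as I can see, cannot be with a constant $\ell_0$. The paper avoids this entirely because it never tries to suppress any edges: it builds the columnar graph whose inter-column edges \emph{are} the $C(i,j)$ pattern, and then proves the clique-width lower bound for \emph{that} graph directly, without passing through a grid.

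\textbf{Secondary issues.}
Your reflexive-independent-set fix (route through an auxiliary $w$ adjacent to some of $A$, using connectivity) is speculative: you cannot control which vertices of $A$ $w$ sees, nor how many, and you would be re-introducing exactly the ``uncontrolled adjacencies'' problem. The paper sidesteps this too, because the entire $H_1$ lives in each column and only the $A$/$B$ rows participate in inter-column edges. Finally, your reliance on the Golumbic--Rotics grid bound is unnecessary once you switch to the paper's columnar construction; the folklore balanced-separator counting gives the linear lower bound directly.
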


\begin{proof}
We construct, via an $(H_1,3)$-expression, a graph $G_k$ of ordinary clique-width~$\Omega(k)$ as follows.

For $a\in\{1,\ldots,k\}$ we create, similarly as in \Cref{clm:basics}\,\ref{it:H2}), 
a loopless labelled copy $G'_a$ of the graph $H_1$, such that the following two properties hold:
every vertex $x\in V(G'_a)$ which is a copy of a vertex $v\in A$ if $a$ is odd (a copy of $v\in B$ if $a$ is even) 
has the label with colour $2$ and parameter vertex $v$, and all remaining vertices of $G'_a$ have labels with colour~$1$ and their parameter vertices are irrelevant.
The vertices of each $G'_a$ with colour $2$ are further called {\em active}.

We set $G_1:=G'_1$, and for $a=2,\ldots,k$ we do:
\begin{itemize}
\item In $G_{a-1}$ we recolour $2$ to $3$.
\item Then we make the disjoint union~$G_a:=G_{a-1}\,\dot\cup\,G'_a$, and add edges between colours $2$ and $3$.
\item In $G_{a}$ we recolour $3$ to $1$.
\end{itemize}

Altogether, the graph $G_k$ has $k\cdot|V(H_1)|$ vertices, contains $k$ disjoint copies $G'_a$ of $H_1$ and is connected.
For every $a\in\{2,\ldots,k\}$ the edges between $V(G'_{a-1})$ and $V(G'_{a})$ in $G_k$ induce (with their end vertices)
a subgraph $G''_a$, such that $G''_a$ is isomorphic to $H_1[A\cup B]$ if $A\not=B$, and $G''_a$ is isomorphic to either a matching or a co-matching otherwise.
There are no other edges in~$G_k$ than these described ones.
For clarity, we imagine the copy $G'_a$ of $H_1$ as the ``column~$a$'' of $G_k$, 
and the set of the active copies of the vertices $u_i$ and $u'_i$ of $A\cup B$ as the ``row~$i$'' of $G_k$ for $i\in\{1,\ldots,k\}$.
The non-active vertices are not considered a part of any row in this setting.

Now, assume we have an (ordinary) $\ell$-expression $\varphi$ valued~$G_k$ for some integer~$\ell$. 
We apply an argument which is folklore in this area.
There must exist a subexpression $\varphi_1$ of $\varphi$ making a subset of vertices $X\subseteq V(G_k)$
such that $\frac13|V(G_k)|\leq|X|\leq\frac23|V(G_k)|$.
Note that it will be irrelevant which of the edges of $G_k[X]$ our $\varphi_1$ makes.
Let $\bar X=V(G_k)\setminus X$.

For $a\in\{2,\ldots,k\}$, let $J\subseteq\{1,\ldots,k\}$ be the set of rows $j\in J$ in which the columns $a-1$ and $a$ differ with respect to $X$;
meaning that out of the two vertices in row $j$ and columns $a-1$ and $a$, exactly one belongs to $X$ and the other one to~$\bar X$.
Since the vertices of every two rows from $J$ have distinct adjacencies between columns $a-1$ and $a$, as given by the condition $C(i,j)$ of the lemma,
we necessarily have~$|J|\leq2\ell$.
In other words, the columns $a-1$ and $a$ differ with respect to $X$ in at most $2\ell$ rows.

Likewise, at most $\ell$ columns are such that they intersect both $X$ and $\bar X$.
This follows similarly since every column is a copy of connected $H_1$, and so it needs in $\varphi_1$ a special colour
for its (at least one) ``private'' edge from $X$ to $\bar X$.
The two latter conditions together are in a clear contradiction with $\frac13|V(G_k)|\leq|X|\leq\frac23|V(G_k)|$ if~$\ell\in o(k)$.
\end{proof}

Secondly, there is an interesting relation to established concepts in the case of parameter families $\ca H$ of bounded degrees,
and in particular of bounded bandwidth which includes the case of $\ca H$ being the family of loop paths.

\begin{theorem}\label{thm:localcw}
Let $\ca H$ be a family of loop graphs of maximum degree~$\Delta$.
Then the class of graphs of $\ca H$-clique-width at most $\ell$ is of bounded local clique-width in terms of $\Delta$ and~$\ell$.
Furthermore, if $\ca H$ is of bandwidth at most~$b$, 
then the class of graphs of $\ca H$-clique-width at most $\ell$ is of {\em linearly} bounded local clique-width in terms of $b$ and~$\ell$;
specifically, this bounding function is $(2b\ell\cdot r)$ for radius~$r$.
\end{theorem}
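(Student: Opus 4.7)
The plan is to exploit the fundamental property of $\ca H$-clique-width that every edge created in an $(H,\ell)$-expression is ``routed'' through an edge (or loop) of the parameter graph $H$. Concretely, fix any $(H,\ell)$-expression $\varphi$ witnessing $\hcw{\ca H}(G)\leq\ell$ and, for every $x\in V(G)$, let $h(x)\in V(H)$ denote its parameter vertex (which is invariant throughout $\varphi$). Whenever $xy\in E(G)$, the operation creating this edge forces $h(x)h(y)\in E(H)$ with either $h(x)\ne h(y)$, or $h(x)=h(y)$ with a loop in $H$. Hence, for any path $v=x_0,x_1,\ldots,x_r=w$ in $G$, the sequence $h(x_0),h(x_1),\ldots,h(x_r)$ is a walk of length $\leq r$ in $H$ (ignoring loops for distance). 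Thus every vertex of the $r$-neighbourhood $N_r(v)$ in $G$ has its parameter vertex within the ball $B_r(h(v))\subseteq V(H)$ of radius $r$ around $h(v)$ in $H$.

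Next, I would bound $D:=|B_r(h(v))|$ in the two settings. When $H$ has maximum degree $\Delta$, a standard count gives $D\leq 1+\Delta+\Delta^2+\cdots+\Delta^r$. When $H$ has bandwidth at most $b$, any linear layout witnessing this bandwidth shows that a walk of length $r$ starting at $h(v)$ can move by at most $b$ positions per step, so $B_r(h(v))$ occupies at most $2br+1$ consecutive positions of the layout, giving $D\leq 2br+1=O(br)$.

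The final step is to convert the restricted expression into an ordinary clique-width expression. Restrict $\varphi$ to $N_r(v)$ by deleting every vertex-creation that produces a vertex outside $N_r(v)$; the remaining tree of operations is syntactically valid and produces exactly $G[N_r(v)]$ because edge-additions simply act on the surviving vertices and induced-subgraph monotonicity applies. In the restricted expression only parameter vertices in $B_r(h(v))$ occur, so the full set of labels has size at most $\ell\cdot D$. Treat each label $(i,u)$ with $u\in B_r(h(v))$ as its own ordinary colour: vertex creations, disjoint unions, and recolourings translate verbatim, while a single ``add edges between $i$ and $j$'' step is emulated by at most $D^2$ ordinary edge-additions, one for each pair $(u,w)\in B_r(h(v))^2$ with $uw\in E(H)$ (including $u=w$ if there is a loop). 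Consequently, $\cw\bigl(G[N_r(v)]\bigr)\leq \ell\cdot D$, which yields the promised bound $\ell\cdot(1+\Delta+\cdots+\Delta^r)$ in the bounded-degree case, and a bound of order $b\ell\cdot r$ in the bounded-bandwidth case.

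The argument has no real obstacle; the closest thing to a subtlety is checking that restricting an $\ca H$-clique-width expression to an induced subset is automatically valid without blowing up the width, and that the unchanged parameter vertex $h(x)$ of each vertex really does let us project paths of $G$ to walks of $H$ of the same length. Both are immediate from \Cref{def:hcw} and the remark in the paper that parameter vertices cannot be altered by recolouring operations.
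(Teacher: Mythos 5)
Your proof is correct and takes essentially the same route as the paper: you project paths of $G$ through the invariant parameter vertices to walks in $H$, bound the size of the radius-$r$ ball in $H$ by either $\Delta$ or $b$, and then replace each surviving label $(i,u)$ by a fresh ordinary colour to get an $\ell\cdot D$-expression for $G[N_r(v)]$. The only (harmless) divergence is that you more carefully note the bandwidth-$b$ ball has at most $2br+1$ vertices rather than the paper's looser $br$, and you spell out why truncating the expression to $N_r(v)$ remains valid.
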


\begin{proof}
Let $H\in\ca H$ and $G$ be a graph that is a value of an $(H,\ell)$-expression $\varphi$.
Choose $x\in V(G)$, and assume a vertex $y\in V(G)$ at distance at most $r$ from $x$ in~$G$.
Let $v,w\in V(H)$ be the parameter vertices in $\varphi$ of $x$ and $y$, respectively.
As argued in \Cref{clm:basics}\,\ref{it:homom}), there is a homomorphism $G\to H$ taking a path
between $x$ and $y$ into a walk between $v$ and $w$ in $H$, and so the distance from $v$ to $w$ in $H$ is at most~$r$.
Since $\Delta(H)\leq\Delta$, the $r$-neighbourhood of $v$ in $H$ has at most $(\Delta+1)^r$ vertices,
and hence $\varphi$ restricted to the $r$-neighbourhood of $x$ in $G$ uses only at most $(\Delta+1)^r$ parameter vertices
which can be replaced in $\varphi$ by unique colours.
This way we obtain an (ordinary) $\ell\cdot(\Delta+1)^r$-expression whose value is the $r$-neighbourhood of $x$ in $G$.
We can thus set $f(r):=\ell\cdot(\Delta+1)^r$ (independently of $H\in\ca H$ and $G$) 
to certify bounded local clique-width of every~$G$ such that $\hcw{\ca H}(G)\leq\ell$.

Furthermore, if $H$ is of bandwidth $b$, then the $r$-neighbourhood of any vertex $v$ in $H$ has at most $2br$ vertices,
and so we can choose $f(r):=2b\ell\cdot r$ (independently of $H\in\ca H$ and $G$) as the local clique-width bounding function.
\end{proof}

A similar structural relation of $\ca H$-clique-width to the parameter twin-width is stated later in
\Cref{cor:Hcw-tww}, as a consequence of a product-structure-like characterisation.

From \Cref{thm:localcw} we, for instance, immediately get tractability of FO model checking,
which is FPT for all classes of bounded local clique-width -- this well-known fact follows by a combination
of the ideas of Frick and Grohe~\cite{DBLP:journals/jacm/FrickG01} and of
Dawar, Grohe and Kreutzer~\cite{DBLP:conf/lics/DawarGK07}:
\begin{corollary}\label{cor:localcw}
For every family $\ca H$ of loop graphs, the FO model checking problem of a graph $G$ is in FPT 
when parametrised by the formula, the maximum degree of $\ca H$ and the $\ca H$-clique-width of~$G$.
\qed\end{corollary}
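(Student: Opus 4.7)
The plan is to reduce to known FO model checking tractability on classes of bounded local clique-width, with \Cref{thm:localcw} serving as the sole bridge between the hypothesis of the corollary and that known framework. No new combinatorial work is needed; the statement is essentially a composition of two previous results.

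Concretely, I would first invoke \Cref{thm:localcw} with the given maximum-degree bound $\Delta$ on $\ca H$ and the $\ca H$-clique-width bound $\ell=\hcw{\ca H}(G)$. This yields an explicit local clique-width bounding function, namely $f(r):=\ell\cdot(\Delta+1)^r$, which depends only on the parameters and on the radius $r$. Hence $G$ lies in a class of bounded local clique-width once $\Delta$ and $\ell$ are fixed.

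Next, I would appeal to the theorem of Dawar, Grohe and Kreutzer~\cite{DBLP:conf/lics/DawarGK07}, which extends the locality-based approach of Frick and Grohe~\cite{DBLP:journals/jacm/FrickG01} from bounded local tree-width to bounded local clique-width: FO model checking is FPT on any graph class of bounded local clique-width when parameterized by the formula. The mechanism, for the reader's convenience, is Gaifman's locality theorem, which rewrites the input FO sentence $\varphi$ as a Boolean combination of statements asserting the existence of an $r$-scattered tuple of vertices satisfying $r$-local subformulas, where $r$ depends only on $|\varphi|$. Each such $r$-local subformula is evaluated on the $r$-neighbourhood of a vertex, whose clique-width is bounded by $f(r)$, so Courcelle--Makowsky--Rotics-style MSO$_1$ (hence FO) model checking on bounded clique-width applies in FPT time per neighbourhood.

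Combining these two ingredients gives an FPT algorithm in the parameters $|\varphi|$, $\Delta$, and $\hcw{\ca H}(G)$, as claimed. There is no genuine obstacle: the only thing to check is that the constants produced by \Cref{thm:localcw} feed into the Dawar--Grohe--Kreutzer bound as a computable function of the claimed parameters, and this is immediate from the closed form of $f(r)$.
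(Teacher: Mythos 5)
Your proposal is exactly the paper's intended argument: invoke \Cref{thm:localcw} to obtain the explicit local clique-width bound $f(r)=\ell\cdot(\Delta+1)^r$, then cite the Frick--Grohe and Dawar--Grohe--Kreutzer combination for FPT FO model checking on classes of bounded local clique-width. The Gaifman-locality elaboration you add is consistent with the citations the paper points to and does not diverge from the intended route.
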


Furthermore, it may be interesting to ask to which extent \Cref{thm:localcw} can be reversed.
This cannot be done straightforwardly since there are families $\ca H$ of unbounded degrees, such that
classes of bounded $\ca H$-clique-width not only have bounded local clique-width, but even bounded ordinary clique-width.
One example is $\ca H_1$ the class of all reflexive cliques by \Cref{thm:bddiversity}.
On the other hand, e.g., for $\ca H_2$ being the class of all reflexive stars, there are graphs whose $\ca H_2$-clique-width
is bounded by a constant, and they contain arbitrarily large induced grids and a universal vertex adjacent to everything
(a construction similar to \Cref{clm:basics}\,\ref{it:grids})\,).
Such graphs hence have unbounded local clique-width.

\section{Approaching Hereditary Graph Product Structure}
\label{sec:induprod}

We now directly relate the notion of $\ca H$-clique-width to the hereditary product structure, and use this view to derive other simple properties.

From this section onward we restrict our attention to families $\ca H$ formed by {\em reflexive} loop graphs (i.e., all vertices have loops
in the graphs of $\ca H$), which makes the most natural sense with respect to the strong-product structure studied.

\begin{theorem}\label{thm:hcwproduct}
Let $\ca H$ be a family of reflexive loop graphs,
and $\ca H'$ be the family of simple graphs obtained from the graphs of $\ca H$ by removing all loops.
For every integer $\ell\geq2$, the following holds.
A simple graph $G$ is of $\ca H$-clique-width at most~$\ell$, if and only if $G$ is isomorphic to an induced subgraph
of the strong product $H'\boxtimes M$ where $H'\in\ca H'$ and $M$ is a simple graph of clique-width at~most~$\ell$.
\end{theorem}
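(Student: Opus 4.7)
My plan is to construct explicit translations between the two sides, using the natural embedding $x\mapsto(v_x,x)$, where $v_x\in V(H)$ denotes the parameter vertex assigned to $x\in V(G)$ in the underlying $(H,\ell)$-expression. For the $(\Rightarrow)$ direction, I would take an $(H,\ell)$-expression $\varphi$ witnessing $\hcw{\ca H}(G)\leq\ell$ with $H\in\ca H$, and \emph{project} it to an ordinary $\ell$-expression $\psi$ by replacing every label $(i,v)$ by the colour $i$ alone. The value $M$ of $\psi$ is then a simple graph on vertex set $V(G)$ of clique-width at most $\ell$, whose edges $xy$ are precisely those pairs joined by some ``add edges between colours'' operation of $\varphi$ (disregarding the parameter-vertex condition). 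Writing $H'$ for $H$ with loops removed, I would argue that $x\mapsto(v_x,x)$ realises $G$ as an induced subgraph of $H'\boxtimes M$: for distinct $x,y\in V(G)$ the two images differ in the second coordinate, so $(v_x,x)(v_y,y)\in E(H'\boxtimes M)$ unfolds to ``$xy\in E(M)$ and ($v_xv_y\in E(H')$ or $v_x=v_y$)'', which by reflexivity of $H$ equals ``$xy\in E(M)$ and $v_xv_y\in E(H)$'' -- exactly the condition for $xy$ being placed into $E(G)$ by~$\varphi$.

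For the $(\Leftarrow)$ direction, I would lift an $\ell$-expression $\psi$ of $M$ to an $(H,\ell)$-expression $\varphi$ of $G$. Writing $S_w=\{v\in V(H'):(v,w)\in V(G)\}$ for the fiber of the given embedding above $w\in V(M)$, the disjoint union, add-edges, and recolour operations of $\psi$ translate verbatim to $\varphi$; the only nontrivial case is the \emph{create} operation. Whenever $\psi$ creates $w$ with colour $c$, I would substitute a subexpression $\sigma_w$ that builds the whole fiber $\{(v,w):v\in S_w\}$, with each $(v,w)$ given the final label $(c,v)$ and with all internal edges corresponding to $E(H'[S_w])$ already in place. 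Such $\sigma_w$ is provided by the construction of \Cref{clm:basics}\,\ref{it:H2}) using only two colours: $c$ and an arbitrary $j\in\{1,\ldots,\ell\}\setminus\{c\}$ (which exists since $\ell\geq 2$), with the roles from that construction swapped so that every fiber vertex ends up at colour $c$. A straightforward induction on $\psi$ would then verify that $\varphi$'s value is $G$: adjacency between $(v,w)$ and $(v',w')$ with $w=w'$ comes from $\sigma_w$ and matches $E(H')$, while for $w\neq w'$ it comes from the translated add-edges operation and is present iff $ww'\in E(M)$ and $vv'\in E(H)$, again matching the strong product's definition via reflexivity of $H$.

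The main obstacle to keep in mind is that $\varphi$ must stay within the colour budget $\ell$, not $\ell+O(1)$. This is where it matters that each $\sigma_w$ is built as a self-contained labelled graph \emph{before} being joined to the rest of $\varphi$ via disjoint union: the two colours $c,j$ used inside $\sigma_w$ may freely coincide with colours actively in use elsewhere, since the edge-additions and recolourings inside $\sigma_w$ only affect vertices created inside $\sigma_w$ itself. Once $\sigma_w$ terminates, every fiber vertex sits at colour $c$, matching $\psi$'s freshly created $w$, and the remainder of the simulation proceeds identically to $\psi$.
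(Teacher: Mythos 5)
Your proof is correct and follows essentially the same two translations as the paper: projecting labels $(i,v)\mapsto i$ for $(\Rightarrow)$ with the embedding $x\mapsto(v_x,x)$, and for $(\Leftarrow)$ substituting a \Cref{clm:basics}\,\ref{it:H2})-style two-colour subexpression at every create-vertex node of the $\ell$-expression of $M$. The only cosmetic difference is in $(\Leftarrow)$: the paper first builds an $(H^\circ,\ell)$-expression for the full product $H'\boxtimes M$ and then invokes monotonicity of $\hcw{\ca H}$ under induced subgraphs, whereas you build $G$ directly by creating only the fiber $S_w$ above each $w$; both rest on the same substitution idea and the same observation that the two auxiliary colours inside each fiber gadget are local to a self-contained subexpression and hence cost nothing against the budget $\ell$.
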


\begin{proof}
In the `$\Leftarrow$' direction, it is enough to show that $\hcw{\ca H}(G)\leq\ell$ for~$G:=H'\boxtimes M$.
Let $\varphi$ be an $\ell$-expression of the graph $M$, and let $H^\circ$ be obtained from $H'$ by adding loops to all vertices.
We are going to transform $\varphi$ into an $(H^\circ,\ell)$-expression as follows.
First, for each $x\in V(M)$ we independently construct a copy $H'_x$ of $H'$, using only $2\leq\ell$ colours by \Cref{clm:basics}\,\ref{it:H2}).
That is, the parameter vertex of every $v_x\in V(H'_x)$ is the preimage $v\in V(H')$ of~$v_x$.
Then, at every moment the expression $\varphi$ introduces a new vertex $y\in V(M)$ of colour~$i$,
we take (substitute) the copy $H'_y$ and recolour it to~$i$.
The remaining operations (union, recolouring, and adding edges) stay in place in $\varphi$, but are now applied according to \Cref{def:hcw}.

We claim that the value $G$ of the resulting transformed $(H^\circ,\ell)$-expression $\sigma$ is~$H'\boxtimes M$.
Indeed, the vertex set is $V(G)=V(H')\times V(M)$, and for each $m\in V(M)$ the subgraph induced on $V(H')\times\{m\}$ is a copy of~$H'$.
For any $[v_1,m_1],[v_2,m_2]\in V(G)$ and $m_1\not=m_2$; 
if $\{[v_1,m_1],[v_2,m_2]\}\in E(G)$, then $v_1v_2\in E(H^\circ)$ by \Cref{def:hcw}, and $m_1m_2\in E(M)$ by the definition of~$\sigma$.
Hence $\{[v_1,m_1],[v_2,m_2]\}\in E(H'\boxtimes M)$.
Conversely, if $\{[v_1,m_1],[v_2,m_2]\}\in E(H'\boxtimes M)$, then, by the definition of $\boxtimes$,
$v_1v_2\in E(H')$ or $v_1=v_2$, meaning $v_1v_2\in E(H^\circ)$, and $m_1m_2\in E(M)$.
So, the edge $\{[v_1,m_1],[v_2,m_2]\}$ has been created by~$\sigma$.

\smallskip
In the `$\Rightarrow$' direction, let $\sigma$ be an $(H^\circ,\ell)$-expression valued~$G$, for some $H^\circ\in\ca H$.
Let $H'\in\ca H'$ be the simple graph of $H^\circ$.
We are going to construct an $\ell$-expression $\varphi$ valued $M$ on the vertex $V(M)=V(G)$, such that $G\subseteq_i H'\boxtimes M$.
The expression $\varphi$ simply discards parameter vertices (cf.~\Cref{def:hcw}) from the labels in $\sigma$.
Hence, we clearly get~$M\supseteq G$.
To prove that $G\subseteq_i H'\boxtimes M$, consider any vertices $x\not=y\in V(G)$ labelled $(i,v)$ and $(j,w)$ by~$\sigma$.
Here, $v$ and $w$ are uniquely determined by~$\sigma$, and indefinite $i$ and $j$ are irrelevant for our argument.
We claim that the vertices $x$ and $y$ as of $G$ can be represented by $[v,x]$ and $[w,y]$ of the product~$H'\boxtimes M$.
If $xy\in E(G)$, then $xy\in E(M)$ by previous~$M\supseteq G$, and $vw\in E(H^\circ)$ by \Cref{def:hcw}.
Consequently, $\{[v,x],[w,y]\}\in E(H'\boxtimes M)$ by~$\boxtimes$.
On the other hand, if $\{[v,x],[w,y]\}\in E(H'\boxtimes M)$, then $vw\in E(H')$ or $v=w$, and so $vw\in E(H^\circ)$.
Moreover, $xy\in E(M)$ since $x\not=y$, and so $xy\in E(G)$ since the (original) $(H^\circ,\ell)$-expression $\sigma$ creates the edge~$xy$
by~\Cref{def:hcw}.
\end{proof}

\Cref{thm:hcwproduct} can be used also to bound the twin-width of graphs of bounded $\ca H$-clique-width.
To show this, we first prove the following ad-hoc upper bound.

\begin{proposition}\label{pro:Pcw-tww}
Let $P$ be a reflexive path and $G$ a simple graph.
Then the twin-width of $G$ is at most $5\cdot(\hcw{\{P\}}(G))-2$.
Consequently, denoting by $\ca P^\circ$ the class of all reflexive paths,
the twin-width of any simple graph $G$ is at most $5\cdot(\hcw{\ca P^\circ\!}(G))-2$.
\end{proposition}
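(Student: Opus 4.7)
Let $\ell := \hcw{\{P\}}(G)$; if $\ell = \infty$ the bound is vacuous, so assume $\ell < \infty$. The plan is to invoke \Cref{thm:hcwproduct} to view $G$ as an induced subgraph $G \subseteq_i P' \boxtimes M$, where $P'$ is the simple path obtained from $P$ by removing its loops and $M$ is a simple graph of clique-width at most $\ell$. As twin-width is monotone under passing to induced subgraphs, it suffices to exhibit a contraction sequence of $P' \boxtimes M$ of width at most $5\ell - 2$.

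The sequence is driven by an $\ell$-expression $\varphi$ of $M$ together with the natural enumeration $V(P') = (p_1, \ldots, p_n)$. Throughout, the super-vertices of the current trigraph are indexed by pairs (column $p_i$, current label $c$), so at any moment there are at most $\ell$ super-vertices in each column of $P'$. \emph{Phase A} simulates $\varphi$ bottom-up: when $\varphi$ creates a new vertex $m$ of label $c$, each $(p_i, m)$ is merged into the super-vertex indexed $(p_i, c)$; when $\varphi$ recolors $i \to j$, the super-vertex at $(p_k, i)$ is contracted into the one at $(p_k, j)$ for every column $p_k$; the union and edge-addition operations of $\varphi$ need no contractions. \emph{Phase B} then sweeps along $P'$ from $p_n$ back to $p_1$: in each step, the super-vertices of the current column are contracted into those of the already-processed block label by label, ending in a single vertex.

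The red-degree analysis rests on a clean structural observation: since $G \subseteq_i P' \boxtimes M$ contains no edges across columns at $P'$-distance greater than $1$, any edge (black or red) of every intermediate trigraph joins super-vertices whose column extents lie within distance $1$ in $P'$. Hence a super-vertex of single-column extent $\{p_i\}$ can only be red-adjacent to super-vertices in the three columns $p_{i-1}, p_i, p_{i+1}$, and a super-vertex of multi-column extent arising during the sweep has only one external boundary column. Accumulating these contributions across the intermediate states of both phases -- the transient states between consecutive contractions within a single recoloring of $\varphi$ or within a single sweep step, together with the $\leq \ell$ super-vertices per column and the multi-column extents -- yields the uniform upper bound of $5\ell - 2$ on the red degree.

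The principal technical obstacle is the recoloring step of Phase A. Two vertices of $M$ sharing a current $\varphi$-label $c$ have identical \emph{future} in $\varphi$ but may have genuinely different \emph{past}, so the corresponding label class $V_c$ need not be a clique, nor an independent set, nor fully bijoined with another class $V_{c'}$. Consequently a recoloring-induced contraction generically creates red edges with all nearby super-vertices in the three relevant columns simultaneously, and one must verify that these, combined with the red edges already present from previous operations and with those appearing during the sweep, still fit within the $5\ell - 2$ budget.
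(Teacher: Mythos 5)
Your overall plan mirrors the paper's: build a contraction sequence by following a clique-width expression bottom-up, contracting together vertices of the same label, and exploiting the fact that in the product only vertices in equal or adjacent $P'$-columns can be adjacent. The detour through \Cref{thm:hcwproduct} to a contraction sequence of $P'\boxtimes M$ is essentially interchangeable with the paper's direct work on the $(P,\ell)$-expression of $G$ (your pair (column $p_v$, colour $i$) is exactly the paper's label $(i,v)$), and Phase~B is a sensible addition that the paper leaves implicit.

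The gap is in Phase~A, and it sits precisely where the constant $5\ell-2$ has to be earned. You assert that ``the union and edge-addition operations of $\varphi$ need no contractions.'' Edge-addition is indeed a no-op, but union is not, and the error is not cosmetic. If $(p_i,c)$ is a single global super-vertex into which every newly created vertex of current label $c$ is immediately merged, then two vertices originating in different branches of a union, which happen to share a label at their respective creation moments, get merged even though a later recolouring inside one branch will send their labels -- and hence their future adjacencies in $M$ -- apart. Concretely, take $\psi_1$ creating $m_1$ of label $1$; $\psi_2$ creating $m_2$ of label $1$ and then recolouring $1\to 2$; their union; then $N$ further vertices of label $3$; then add-edges$(1,3)$. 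In $M$ we have $m_1$ adjacent to all the label-$3$ vertices and $m_2$ to none of them. Your scheme merges $m_1$ with $m_2$ at the moment $m_2$ is created, and the resulting class $\{(p_i,m_1),(p_i,m_2)\}$ instantly acquires a red edge to every singleton $(p_j,m_k)$ with $|i-j|\le 1$, i.e.\ red degree about $3N$, unbounded. The crucial twin property -- that a super-vertex carries no red edges to the still-untouched rest of the graph -- holds in the paper's scheme only because the two sub-branches' super-vertex families are kept separate until the union node and then merged pairwise, path position by path position and, within each position, colour by colour. It is exactly the count of partially merged super-vertices across three consecutive $P$-positions in that sweep, $\ell + 2(\ell-1) + 2\ell$, that yields $5\ell-2$. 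Your sketch never performs these merges, never fixes their order, and never carries out this count; the bound is asserted rather than derived, and you yourself flag the recolouring analysis as left undone. That analysis is the missing core of the proof, and without repairing the union step it cannot be completed.
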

\begin{proof}
Let $G$ be the value of a $(P,\ell)$-expression $\varphi$, where $\ell=\hcw{\{P\}}(G)$.
When constructing a contraction sequence for $G$, we proceed recursively (bottom-up) along 
the expression tree of~$\varphi$; processing only the union and recolouring nodes, and at each node
contracting together all vertices of the same label.

Consider a situation at a node with a subexpression $\varphi_{0}$ of $\varphi$, 
where $X_0\subseteq V(G)$ is the vertex set generated by $\varphi_0$,
and let $x^0_{(i,v)}$ denote the vertex resulting from the contractions of all vertices of $X_0$
that are of label $(i,v)$ by $\varphi_0$.
The core observation is that every vertex of $V(G)\setminus X_0$ has the same adjacency to all vertices forming
$x^0_{(i,v)}$ by \Cref{def:hcw}, and so the only possible red neighbours of $x^0_{(i,v)}$
in a contraction of $G$ are those that stem from~$X_0$.

The only possible neighbours of $x^0_{(i,v)}$ in the described contraction of the induced subgraph $G[X_0]$
are $x^0_{(j,w)}$ where $j\in\{1,\ldots,\ell\}$ and $vw\in E(P)$ --
altogether at most $3\ell-1$ choices of potential red neighbours of $x^0_{(i,v)}$ in the contraction of $G[X_0]$.
If a recolouring operation $i$ to $j$ is encountered after the node of $\varphi_0$, we simply contract each
former $x^0_{(i,v)}$ with $x^0_{(j,v)}$ over all $v\in X_0$, not increasing the previous bound on the red degree.

Consider now a union node making $X_2:=X_0\dot\cup X_1$, where $X_1$ has been generated by a sibling
subexpression $\varphi_1$ of $\varphi$, and let $x^1_{(i,v)}$ analogously denote the vertices resulting from contractions of~$X_1$.
Let the vertices of $P$ be $V(P)=(v_1,\ldots,v_a)$ in the natural order along the path.
For $k=1,\ldots,a$, and subsequently for $i=1,\ldots,\ell$, we make $x^2_{(i,v_k)}$ by contracting $x^0_{(i,v_k)}$ with $x^1_{(i,v_k)}$.
Considering the corresponding successive contractions of the induced subgraph $G[X_2]$,
the only possible red neighbours of $x^2_{(i,v_k)}$ are the $\ell$ vertices $x^2_{(i',v_{k-1})}$,
the up to $2(\ell-1)$ vertices $x^2_{(j,v_k)}$ for $j<i$, or $x^0_{(j,v_k)}$, $x^1_{(j,v_k)}$ for $j>i$,
and the $2\ell$ vertices $x^0_{(i'',v_{k+1})}$, $x^1_{(i'',v_{k+1})}$.
The maximum possible encountered red degree is thus $\ell+2(\ell-1)+2\ell=5\ell-2$.
\end{proof}

\begin{corollary}\label{cor:Hcw-tww}
Let $\ca H$ be a family of reflexive loop graphs of maximum degree $\Delta$ and twin-width at most~$t$.
Then the twin-width of any simple graph $G$ is at most $\ca O(t+\Delta\cdot{\hcw{\ca H}(G)})$.
\end{corollary}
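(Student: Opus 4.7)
\emph{Proof plan.} Let $\ell := \hcw{\ca H}(G)$. My plan combines \Cref{thm:hcwproduct} with the two-phase contraction-sequence construction from \Cref{pro:Pcw-tww}. First, \Cref{thm:hcwproduct} supplies an induced embedding $G \subseteq_i H' \boxtimes M$, where $H' \in \ca H'$ is the loopless simple graph of some $H \in \ca H$ (so $\Delta(H') \leq \Delta$ and $H'$ has twin-width at most~$t$ by the hypothesis on $\ca H$), and $M$ is a simple graph of clique-width at most~$\ell$. Since twin-width is monotone under taking induced subgraphs, it suffices to bound the twin-width of $H' \boxtimes M$.

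Next, I would build a contraction sequence of $H' \boxtimes M$ in two phases. Phase~A processes a clique-width $\ell$-expression of $M$ bottom-up, exactly parallel to \Cref{pro:Pcw-tww}: at each subexpression generating $X \subseteq V(M)$, and independently for every $v \in V(H')$, contract the product-slice $\{[v, m] : m \in X,\ m\text{ of colour } i\}$ into a single super-vertex $(i, v)$, with $i \in \{1, \dots, \ell\}$. By the uniformity argument underpinning \Cref{pro:Pcw-tww}, vertices of the same label $(i, v)$ remain indistinguishable to any super-vertex in a column $\{v'\} \times V(M)$ with $v' \notin N_{H'}(v) \cup \{v\}$; hence every red edge incident to $(i, v)$ lies within the at most $\Delta + 1$ columns of $N_{H'}(v) \cup \{v\}$, and the Phase~A red degree is bounded by $\ca O(\Delta\,\ell)$. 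Phase~B then runs an optimal width-$t$ contraction sequence of $H'$ on the resulting trigraph of $\ell \cdot |V(H')|$ super-vertices: each $H'$-contraction $v_1, v_2 \to v_0$ is simulated by $\ell$ sequential super-vertex contractions $(i, v_1), (i, v_2) \to (i, v_0)$, $i \in [\ell]$. A new red super-vertex edge from $(i, v_0)$ to some $(j, v')$ appears only when $v_1$ and $v_2$ distinguish $v'$ in $H'$, i.e.\ when $v' \in N_{H'}(v_1) \triangle N_{H'}(v_2)$; this is precisely the set of red $H'$-neighbours of $v_0$ in the width-$t$ sequence, of which there are at most~$t$, each supplying at most $\ell$ new super-vertex neighbours. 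Combined with the inherited Phase~A contribution this yields the claimed total red-degree bound $\ca O(t + \Delta \cdot \hcw{\ca H}(G))$.

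The main technical obstacle lies in Phase~B: one has to order the $\ell$ sub-contractions simulating a single $H'$-contraction so that intermediate red-degree peaks do not exceed the global budget, and verify that the Phase~A red edges, which are localised to columns within the $H'$-neighbourhood of their source, remain correctly accounted for as columns get identified. Both issues reduce to a careful but routine generalisation of the column-by-column bookkeeping in \Cref{pro:Pcw-tww} from reflexive paths to an arbitrary $H \in \ca H$ of bounded twin-width and maximum degree.
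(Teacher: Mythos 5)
Your first two steps --- invoking \Cref{thm:hcwproduct} to obtain $G \subseteq_i H' \boxtimes M$ and reducing by monotonicity to bounding the twin-width of $H' \boxtimes M$ --- match the paper exactly. From there the paper takes a short, black-box route: it applies \Cref{pro:Pcw-tww} with $P = K_1^\circ$ (together with \Cref{clm:basics}~a) to get $\mathrm{tww}(M) \le 5\ell-2$, and then cites the bound of Bonnet et al.~\cite{DBLP:conf/soda/BonnetGKTW21} on twin-width of strong products to conclude $\mathrm{tww}(H'\boxtimes M) \le \max\{t+\Delta,\ (5\ell-2)(\Delta+1)+2\Delta\} = \ca{O}(t+\Delta\ell)$. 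You instead attempt a self-contained two-phase contraction sequence, which is a genuinely different route, but your Phase~B analysis has a quantitative gap.

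Concretely, in Phase~B you keep the colour index $i$ alive and simulate each $H'$-contraction $v_1,v_2 \to v_0$ by $\ell$ sub-contractions. You count $\le t$ red $H'$-neighbours of $v_0$, ``each supplying at most $\ell$ new super-vertex neighbours,'' and add the inherited Phase~A contribution of $\le \Delta\ell$. But $t\ell + \Delta\ell = (t+\Delta)\ell$, which is \emph{not} $\ca{O}(t+\Delta\ell)$: when $t \gg \Delta$ your bound is off by a factor of $\ell$. The culprit is precisely the choice to keep $\ell$ super-vertices per $H'$-column throughout Phase~B, which couples the $t$ term with~$\ell$. (This is not a bookkeeping detail that careful ordering can fix: as long as the trigraph during Phase~B contains $\ell$ super-vertices in each column and $v_0$ has $t$ red neighbours in contracted $H'$, a super-vertex $(i,v_0)$ really can have $\Omega(t\ell)$ red edges.)

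The repair --- which is essentially what the Bonnet et al.\ proof cited by the paper does --- is to insert an intermediate phase between your Phases~A and~B: after Phase~A, contract all $\ell$ colour classes within each $H'$-column into a single super-vertex per column; during this the red degree stays $\le (\Delta+1)\ell$. Only then follow the width-$t$ contraction sequence of $H'$ on the resulting one-super-vertex-per-column trigraph. In that final phase, the super-vertex of a merged column $v_0$ can have a red edge only to merged columns $v'$ that are red or black neighbours of $v_0$ in the contracted $H'$; since the black degree of any merged vertex in a graph of maximum degree $\Delta$ never exceeds $\Delta$ (a black-adjacent merged set must lie inside $N_{H'}(a)$ for every single $a$ in the group of $v_0$), the red degree is $\le t+\Delta$. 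Taking the maximum over the three phases gives $\ca{O}(t+\Delta\ell)$ as required.
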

\begin{proof}
By \Cref{thm:hcwproduct}, we have $G\subseteq_i H'\boxtimes M$,
where $H'$ is of maximum degree at most $\Delta$ and twin-width at most~$t$ and $M$ is of clique-width at most~$\ell:=\hcw{\ca H}(G)$.
Since twin-width is monotone under taking induced subgraphs, it is enough to bound it for the graph $H'\boxtimes M$.

By \Cref{pro:Pcw-tww} applied to $P$ being a single reflexive vertex, and \Cref{clm:basics}\,\ref{it:singlev}),
$M$ is of twin-width at most $k:=5\ell-2$.
Then, by Bonnet et al.~\cite{DBLP:conf/soda/BonnetGKTW21} (bounding twin-width of a strong product), 
$H'\boxtimes M$ is of twin-width at most $\max\big\{t+\Delta,\,k(\Delta+1)+2\Delta\big\}=\ca O(t+\Delta\ell)$.
\end{proof}

Notice that, for constant $\Delta$, the bound $\ca O(t+\Delta\cdot{\hcw{\ca H}(G)})$ in \Cref{cor:Hcw-tww} is asymptotically best possible;
a linear dependence on $t$ (the maximum twin-width in~$\ca H$) is necessary due to \Cref{clm:basics}\,\ref{it:H2}),
and a linear dependence on $\hcw{\ca H}(G)$ is, on the other hand, required already by the subcase of ordinary clique-width.
It is not clear whether the linear dependence on $\Delta$ in the bound of \Cref{cor:Hcw-tww} is really necessary,
however, the next construction (\Cref{prop:squarestar}) shows that the bound has to grow with~$\Delta$, the maximum degree in~$\ca H$, as $\Omega(\sqrt[8]\Delta)$.

In a detail, let $S_n$ denote the star with $n$ leaves, let $S^\circ_n$ denote the reflexive star with $n$ leaves, and $\ca S^\circ=\{S^\circ_n:n\in\mathbb N\}$.
So, for every $n$, $\hcw{\ca S^\circ}(S_n\boxtimes S_n)\leq \cw(S_n)=2$ using \Cref{thm:hcwproduct} and the twin-width of $S_n$ is trivially~$0$.
We hence get that the bound in \Cref{cor:Hcw-tww} must grow with~$n=\Delta(S_n)$ if the twin-width of $S_n\boxtimes S_n$ grows with~$n$ as in \Cref{prop:squarestar}.
Recall that the {\em$k$-subdivision} of a graph $G$ is obtained by replacing every edge of $G$ with a path of length $k+1$ (i.e., a path with $k$ internal vertices).

\begin{proposition}\label{prop:squarestar}
For every graph $G$ there exists $n\in\mathbb N$ such that a graph isomorphic to the $3$-subdivision of $G$ 
is a bi-induced subgraph of $S_n\boxtimes S_n$.
Consequently, the twin-width of $S_n\boxtimes S_n$ is at least $\Omega(\sqrt[8]n)$.
\end{proposition}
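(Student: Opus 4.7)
The plan is to encode the $3$-subdivision of $G$ explicitly inside $S_n\boxtimes S_n$ using the coordinate structure of the product, and then combine bi-induced containment with the known polynomial relation between the twin-width of a graph and of its $3$-subdivision due to Bonnet et al. Denote $V(S_n)=\{c\}\cup\{l_1,\ldots,l_n\}$ with $c$ the centre. A direct check from the definition of $\boxtimes$ shows that a leaf-leaf vertex $(l_a,l_i)$ has only three neighbours in $S_n\boxtimes S_n$, namely $(c,c),(c,l_i),(l_a,c)$, while a centre-leaf vertex $(c,l_i)$ is adjacent to $(c,c)$, to every $(l_a,c)$, and to every $(l_a,l_i)$ (and symmetrically for leaf-centre vertices). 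This is the observation that guides the embedding.

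Let $V(G)=\{v_1,\ldots,v_N\}$ and $E(G)=\{e_1,\ldots,e_M\}$, and take $n:=N+M$. Map $v_i\mapsto(c,l_i)$, and for each edge $e_k=v_iv_j$ (with an arbitrary orientation) set $x_1^{e_k}\mapsto(l_{N+k},l_i)$, $x_2^{e_k}\mapsto(l_{N+k},c)$, $x_3^{e_k}\mapsto(l_{N+k},l_j)$. The two sides of the bipartition of the $3$-subdivision are $A=\{v_i\}\cup\{x_2^{e_k}\}$ (mapped to vertices with a coordinate equal to $c$) and $B=\{x_1^{e_k}\}\cup\{x_3^{e_k}\}$ (mapped to leaf-leaf vertices). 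A direct coordinate check shows that each five-vertex sequence $(c,l_i),(l_{N+k},l_i),(l_{N+k},c),(l_{N+k},l_j),(c,l_j)$ is a path in $S_n\boxtimes S_n$, so the subdivision is a subgraph of the product. For bi-inducedness one examines the two possible types of $A$--$B$ edges in $S_n\boxtimes S_n$: an edge $(c,l_i)\sim(l_b,l_{i'})$ requires $i=i'$ (leaves are pairwise non-adjacent), which corresponds exactly to a subdivision edge $v_i$--$x_1^{e_k}$ or $v_i$--$x_3^{e_k}$ whenever $i$ is an endpoint of~$e_k$; and $(l_a,c)\sim(l_b,l_j)$ requires $a=b$, which corresponds exactly to $x_2^{e_k}$--$x_1^{e_k}$ or $x_2^{e_k}$--$x_3^{e_k}$. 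No other $A$--$B$ edges appear, so the bi-induced property holds (the extra edges inside $A$ forming a complete bipartite $K_{N,M}$ between the $v_i$'s and the $x_2^{e_k}$'s are allowed).

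For the twin-width consequence, combine (i) the above embedding with $n=N+M=O(N^2)$; (ii) monotonicity of twin-width for bi-induced containment in the bipartite setting (since the bi-induced subgraph can be obtained from the induced subgraph on $A\cup B$ by a simple FO-transduction that removes edges inside the sides, and bounded twin-width is preserved under FO-transductions with polynomial quantitative loss); (iii) the result of Bonnet et al.\ that for any graph $F$, $\mathrm{tww}(F)\le O(\mathrm{tww}(F^{(3)})^4)$ where $F^{(3)}$ denotes the $3$-subdivision; and (iv) the existence of graphs on $N$ vertices with twin-width $\Omega(N)$ (e.g.\ random graphs). Choosing such a $G$ of twin-width $\Omega(N)$, the chain $\Omega(N)=\mathrm{tww}(G)\le O(\mathrm{tww}(G^{(3)})^4)\le O(\mathrm{tww}(S_n\boxtimes S_n)^4)$ yields $\mathrm{tww}(S_n\boxtimes S_n)=\Omega(N^{1/4})=\Omega(n^{1/8})$. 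The main obstacle in executing the plan is item~(ii): one must transfer the bi-induced containment (which is strictly weaker than induced containment, because of the extra $K_{N,M}$ edges inside~$A$) into a genuine twin-width comparison, and this is where one must appeal carefully to the robustness of twin-width under bounded FO-transductions or to an auxiliary bipartite/coloured twin-width notion, making sure that the quantitative polynomial loss is absorbed into the eighth-root exponent.
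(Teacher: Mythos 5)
Your embedding is correct and is essentially a coordinate-transposed version of the paper's: you put the $G$-vertices on the axis $\{c\}\times\{l_i\}$ and the middle subdivision vertices on $\{l_{N+k}\}\times\{c\}$, whereas the paper uses $A_1=\{[l_u,c]\}$ for $V(G)$ and $A_2=\{[c,l_j]\}$ for the middle vertices; both verifications of bi-inducedness go through the same adjacency computation (leaves of $S_n$ form an independent set, so the only cross-adjacencies are the intended ones). The first half of the proposition is therefore fine.

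The twin-width half is where your proposal stalls, and you correctly flag step~(ii) as ``the main obstacle'' --- but you leave it unresolved and gesture at FO-transduction robustness, which is both heavier than necessary and quantitatively risky: removing the intra-$A$ edges is indeed a colored quantifier-free interpretation, but the generic bounds for twin-width under FO transductions are far from polynomial, so you cannot simply ``absorb the loss into the eighth-root exponent'' without a concrete bound. The paper avoids all of this with a short direct argument: given a contraction sequence $\sigma$ of $G'_3$ (the \emph{induced} subgraph on $A\cup B$, which is $G_3$ plus the $K_{N,M}$ inside $A$) of width $t$, one replays $\sigma$ on $G_3$ while keeping, for each merged vertex $v$ of $\sigma$, its $A$-part $v^A$ and $B$-part $v^B$ separate. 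Because $A$ and $B$ are independent in $G_3$, no red edge can arise inside $A$ or inside $B$; every red edge $v^Aw^B$ either is $v^Av^B$ or witnesses a red edge $vw$ already present in $\sigma$. This gives $\mathrm{tww}(G_3)\le t+1$, i.e.\ a $+1$ overhead instead of an unquantified transduction blow-up.

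Your step~(iii) is also not something you can cite as stated. The reference \cite{DBLP:conf/soda/BonnetGKTW21} gives the lower bound $\mathrm{tww}(K_s^{(k)})=\Omega(s^{1/(k+1)})$ specifically for cliques; it does not provide the general inequality $\mathrm{tww}(F)\le O(\mathrm{tww}(F^{(3)})^4)$ for arbitrary $F$, and that inequality is not an easy corollary. Because of this, substituting a random graph for $G$ (your step~(iv)) buys you nothing over simply taking $G=K_s$, which is exactly what the paper does: $n=\Theta(s^2)$, $\mathrm{tww}(K_s^{(3)})=\Omega(s^{1/4})=\Omega(n^{1/8})$, then the $+1$ argument transfers this to $G'_3\subseteq_i S_n\boxtimes S_n$. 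In short, your embedding is right, but the two analytic steps you invoke for the twin-width bound are (ii) not carried out and (iii) not a theorem you can quote; the paper's clique-plus-direct-contraction argument closes both gaps.
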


\begin{proof}
We choose $n:=\max\{|V(G)|,|E(G)|\}$ (actually, considering the product \mbox{$S_{|V(G)|}\boxtimes S_{|E(G)|}$} would be enough).
Let $V(S_n)=\{c,l_1,\ldots,l_n\}$ where $c$ is the central vertex.
Let~$G_3$ denote the $3$-subdivision of $G$ (which is a bipartite graph), and $A\cup B=V(G_3)$ be the bipartition of $G_3$ such that~$A\supseteq V(G)$.
We are going to find a graph $G'_3\subseteq_i S_n\boxtimes S_n$ and a bipartition $V(G'_3)=A'\cup B'$
such that $G'_3$ restricted to the edges with at most one end in $A'$ is isomorphic to $G_3$.

Let, for simplicity, $V(G)=\{1,2,\ldots,m\}$ where~$m=|V(G)|$, and $E(G)=\{e_1,e_2,\ldots,e_{m'}\}$ where~$m'=|E(G)|$.
We choose $A'=A'_1\cup A'_2\subseteq V(S_n\boxtimes S_n)$ where $A'_1=\{[l_1,c],\ldots,[l_m,c]\}$ and $A'_2=\{[c,l_1],\ldots,[c,l_{m'}]\}$.
The intention is that every vertex $u\in V(G)$ corresponds to $[l_u,c]\in A'_1$, and every edge $e_j\in E(G)$ corresponds to $[c,l_j]\in A'_2$ which 
is thought of as the ``middle vertex'' subdividing~$e_j$ in~$G_3$.
Then we choose $B'=\{[l_u,l_k],[l_v,l_k]: 1\leq k\leq m',e_k=uv\}\subseteq V(S_n\boxtimes S_n)$.

Observe that there are no edges of $S_n\boxtimes S_n$ with both ends in $B'$ since the leaves of $S_n$ form an independent set.
A vertex $[l_i,c]\in A'_1$ is adjacent to $[l_u,l_k]\in B'$ in $S_n\boxtimes S_n$ if and only if $i=u$,
and $[c,l_j]\in A'_2$ is adjacent to $[l_u,l_k]\in B'$ if and only if $j=k$ and~$u\in e_k$.
Consequently, every vertex $[c,l_k]\in A'_2$ has precisely two neighbours in $B'$ -- the vertices $[l_u,l_k]$ and $[l_v,l_k]$ where~$uv=e_k$,
and $[l_u,l_k]$ in turn has a unique neighbour in $A'_1$ -- the vertex $[l_u,c]$.
Altogether, the subgraph $G'_3$ of $S_n\boxtimes S_n$ induced by $A'\cup B'$, minus the edges of $G'_3$ with both ends in~$A'$, is isomorphic to $G_3$.

For the second part of \Cref{prop:squarestar}, we employ the following result of \cite{DBLP:conf/soda/BonnetGKTW21};
the $k$-subdivision $K_s^{(k)}$ of the clique $K_s$ has twin-width at least $\Omega(\!\sqrt[k+1]s)$.
In our case, $k=3$ and $n=\Theta(s^2)$.
For $G:=K_s$, consider the graph $G'_3$ obtained above as an induced subgraph of $S_n\boxtimes S_n$, 
which in turn contains bi-induced~$G_3\simeq K_s^{(3)}$.
Obviously, the twin-width of $S_n\boxtimes S_n$ is at least as high as that of our $G'_3$.
On the other hand, by aforementioned \cite{DBLP:conf/soda/BonnetGKTW21}, the twin width of $G_3\simeq K_s^{(3)}$ is $\Omega(\sqrt[8]n)$,
which leaves ``bridging the gap from $G'_3$ to $G_3$'' as the remaining task to be done.

Let $t$ be the twin-width of $G'_3$ and $\sigma$ be a corresponding contraction sequence of $G'_3$ of red degree at most~$t$.
We apply the same sequence $\sigma$ to $G_3$ with the following little modification:
for each vertex $v$ of the sequence $\sigma$ of $G'_3$, we keep in the sequence of $G_3$ two (at most) copies $v^A,v^B$ of~$v$ 
-- one collecting the vertices contracted into $v$ which originate from $A$, and one collecting those originating from~$B$.
Since $A$ and $B$ are independent sets of $G_3$, there are no red edges within $A$ and none within $B$ in the contraction sequence of $G_3$.
Each possible red edge of the form $v^Aw^B$ along the modified contraction sequence of $G_3$ either corresponds to a red edge $vw$
of the sequence of $G'_3$, or it is an edge $v^Av^B$.
Consequently, the twin-width of $G_3$ is at most~$t+1$, and so the twin-width of $G'_3$ is of order $\Omega(t)=\Omega(\sqrt[8]n)$.
\end{proof}

\section{Traditional Product Structure the Induced Way}
\label{sec:indutrad}

In regard of the Planar graph product structure theorem, as introduced in \Cref{sec:prelim} (\Cref{thm:origprod}), we are especially interested
in $\ca H$-clique-width for $\ca H=\ca P^\circ$ where $\ca P^\circ$ is the {\em class of reflexive paths}.
We get the following as another immediate consequence of \Cref{thm:hcwproduct}:

\begin{corollary}
For every integer $\ell\geq2$, the following holds.
A graph $G$ is isomorphic to an induced subgraph of the strong product $P\boxtimes M$ where $P$ is a path 
and $M$ is a simple graph of clique-width at most~$\ell$, if and only if\linebreak \mbox{$\hcw{\ca P^\circ\!}(G)\leq\ell$}.
\qed\end{corollary}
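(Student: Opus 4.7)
The plan is to apply \Cref{thm:hcwproduct} directly with $\ca H := \ca P^\circ$, the class of reflexive paths. First I would verify that $\ca P^\circ$ meets the hypothesis of \Cref{thm:hcwproduct}: it is indeed a family of reflexive loop graphs, since every vertex of every member carries a loop by definition. Hence \Cref{thm:hcwproduct} applies and characterizes graphs of $\ca P^\circ$-clique-width at most $\ell$ as exactly the induced subgraphs of $H'\boxtimes M$, where $H'$ ranges over the ``unloopified'' family $\ca P'$ and $M$ is simple of clique-width at most $\ell$.

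The only remaining point is to identify $\ca P'$ with the class of paths. This is immediate: removing the loops from a reflexive path yields an ordinary simple path on the same vertex set, and conversely every simple path arises from some reflexive path by removing its loops. Thus the existential quantifier ``$H'\in\ca P'$'' in \Cref{thm:hcwproduct} is the same as ``$P$ is a path'' in the corollary statement.

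Plugging these identifications into both directions of \Cref{thm:hcwproduct} yields the biconditional: $G$ is an induced subgraph of some $P\boxtimes M$ with $P$ a path and $\cw(M)\leq\ell$ if and only if $\hcw{\ca P^\circ\!}(G)\leq\ell$. I do not expect any obstacle; the statement is a direct specialization, which is why the authors mark it as an immediate consequence and terminate the proof with \qed.
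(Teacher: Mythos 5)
Your proposal is exactly the paper's argument: the corollary is stated as an immediate consequence of \Cref{thm:hcwproduct}, obtained by instantiating $\ca H=\ca P^\circ$ and noting that removing loops from reflexive paths gives precisely the class of (simple) paths. Nothing further is needed.
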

\medskip

There is, however, a more direct connection between our concept and the original Planar graph product structure theorem,
and this connection extends to all product-structure-like results that can be formulated within a strong product
of a factor of bounded degree and a factor of bounded tree-width.
This finding, formulated in \Cref{thm:fromprods}, constitutes the main new contribution of the paper.

Recall that the {\em square} $Q^2$ of a graph $Q$ is the graph on the same vertex set $V(Q^2)=V(Q)$ such that the edges of $Q^2$ are
formed by the pairs of vertices of $Q$ at distance $1$ or $2$ apart.

\begin{theorem}\label{thm:fromprods}
Let $Q$ be a simple graph of maximum degree $\Delta\geq2$ and $M$ be a simple graph of tree-width~$k$.
Assume that a graph $G$ is a subgraph (not necessarily induced) of the strong product $Q\boxtimes M$, that is, $G\subseteq Q\boxtimes M$.
Then:
\begin{enumerate}[a)]\parskip3pt\par
\item 
There exists a graph $M_1$ of clique-width $\ell\leq(\Delta^2+2)\cdot\Delta^{2(\Delta+1)(k+1)}$ such that $G$ is isomorphic to 
an {\em induced} subgraph of the strong product~$Q\boxtimes M_1$; $\>G\subseteq_i Q\boxtimes M_1$.
\\In particular, if $\ca H$ is a graph class containing the reflexive closure of $Q$, then
$\hcw{\ca H}(G)\leq\ell$.
\item 
There exists a graph $M_2$ of tree-width $\ell'\leq(k+1)(\Delta^2+1)\cdot\Delta^{2(\Delta+1)(k+1)}$ such that $G$ is isomorphic to 
an {\em induced} subgraph of the strong product~$Q\boxtimes M_2$; $\>G\subseteq_i Q\boxtimes M_2$.
\end{enumerate}

Furthermore, if the square of the graph $Q$ is $d$-colourable, then the graph $M_1$ can be chosen to have clique-width
$\ell\leq(d+1)\left(\min((d-1)^{(\Delta+1)},\,2^d)\right)^{k+1}$,
and the graph $M_2$ can be chosen of tree-width $\ell'\leq(k+1)d\left(\min((d-1)^{(\Delta+1)},\,2^d)\right)^{k+1}$.
\end{theorem}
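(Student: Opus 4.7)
The plan is to prove the refined bound first; substituting $d \leq \Delta^2 + 1$ (a trivial upper bound on the chromatic number of $Q^2$, whose maximum degree is at most $\Delta^2$) then yields the two main bounds in~(a) and~(b). Accordingly, I fix a proper $d$-colouring $c : V(Q) \to \{1,\ldots,d\}$ of $Q^2$ together with a tree decomposition $(T, \mathcal{X})$ of $M$ of width at most~$k$, rooted at some node~$r$. For each $m \in V(M)$, let $t(m)$ denote the node of $T$ closest to~$r$ whose bag contains~$m$; by the subtree property of $(T,\mathcal{X})$, for every edge $mm' \in E(M)$ at least one of $m' \in X_{t(m)}$ and $m \in X_{t(m')}$ holds --- this asymmetric ``upward'' direction is the key structural fact exploited later.

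For each $v \in V(G)$ at position $(q(v), m(v)) = (q, m)$, I define the type $\tau(v) = (c(q), \sigma_v)$, where $\sigma_v$ is an \emph{adjacency pattern}: for every $m' \in X_{t(m)}$ and every $q' \in N_Q[q]$ it records whether $(q', m') \in V(G)$ is $G$-adjacent to $v$. Because $c$ properly colours $Q^2$, the vertices of $N_Q[q]$ have pairwise distinct colours in $\{1,\ldots,d\}$, so the pattern in each bag slot can be encoded either over the $\Delta+1$ positions in $N_Q[q]$ (giving at most $(d-1)^{\Delta+1}$ relevant values) or over the $d$ colours (giving at most $2^d$ values), yielding the type count $(d+1)(\min((d-1)^{\Delta+1},2^d))^{k+1}$ claimed in the refined bound.

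I take $M_1 := M_2$ with vertex set $V(M_1) := \{(m(v), \tau(v)) : v \in V(G)\}$, and declare the map $\iota : v \mapsto (q(v), (m(v), \tau(v)))$ the witness for $G$ being an induced subgraph of $Q \boxtimes M_1$. Edges of $M_1$ are prescribed so that $\iota$ is induced: $(m_1, T_1)(m_2, T_2) \in E(M_1)$ iff $m_1 m_2 \in E(M)$ or $m_1 = m_2$, and the bit read from $T_2$ at the slot indexed by $m_1$ and the colour recorded in~$T_1$ (or, symmetrically, from $T_1$ using $T_2$'s colour; both read-offs coincide whenever both are defined) says ``adjacent''. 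The principal check is that the rule is consistent across realisations of a given type-pair: for any two $v_1, v_2$ with $\tau(v_1) = \tau(v_2)$ at the same~$m$, the shared pattern translates identically into $G$-adjacency toward any third vertex, because $c$ being a $Q^2$-colouring forces same-coloured $Q$-vertices to have disjoint $Q$-neighbourhoods, so the identity of the ``colour-matched neighbour'' reached by the bit is unambiguous.

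Both width bounds then fall out of $(T, \mathcal{X})$. For tree-width, the bags $Y_t := \{(m, T) \in V(M_1) : m \in X_t\}$ form a valid tree decomposition of $M_1$: every edge of $M_1$ has its two $m$-coordinates in a common bag of~$\mathcal{X}$ by construction, and $|Y_t| \leq (k+1) \cdot d \cdot (\min((d-1)^{\Delta+1}, 2^d))^{k+1}$. For clique-width, I traverse $T$ bottom-up, introducing each vertex $(m, T)$ with $t(m) = t$ under a label equal to its type, adding the required edges to previously-introduced vertices still in the current bag via type-to-type edge operations, and recycling labels once $m$ is forgotten by~$\mathcal{X}$. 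The main obstacle I anticipate is the clique-width bookkeeping: the edge operation is global between two labels, whereas $E(M_1)$ admits edges only when the $m$-coordinates are equal or adjacent in~$M$; this forces the construction to time each edge addition to the exact moment two subexpressions are joined at a bag of $\mathcal{X}$ and to recycle each label at the precise moment the corresponding $m$ leaves the active bag, so that the total label count stays within the claimed $(d+1)(\min((d-1)^{\Delta+1}, 2^d))^{k+1}$.
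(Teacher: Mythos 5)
Your approach to part~(b) (the tree-width bound on $M_2$) is essentially the same as the paper's and is correct. You assign each vertex $v$ a static type $\tau(v)=(c(q(v)),\sigma_v)$ encoding adjacency toward the top bag $X_{t(m(v))}$, orient each edge of $M$ ``upward'' via the observation that $m'\in X_{t(m)}$ or $m\in X_{t(m')}$, and lift the tree decomposition of $M$ to one of $M_2$. This matches the paper's construction of $M_2$ (the paper indexes adjacency slots by a bag-slot function $p:V(M)\to\{0,\ldots,k\}$ rather than by $m'$ itself, but for the tree-width bound this is a cosmetic change). Two small inaccuracies: (1) the claim that the two read-offs ``coincide whenever both are defined'' is only guaranteed for pairs $v_1,v_2$ with $q(v_1)q(v_2)\in E(Q)$ or $q(v_1)=q(v_2)$; for other pairs the two read-offs reference entirely different $G$-vertices and may disagree. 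This does not affect correctness (such pairs are never adjacent in the strong product anyway), but the definition of $E(M_2)$ must commit to one read-off — the paper reads from the type at the $\preceq$-smaller endpoint. (2) Your type count is $d\cdot(\cdots)^{k+1}$, not $(d+1)\cdot(\cdots)^{k+1}$; the extra $+1$ in the paper's bound is a marker used only in the clique-width construction (see below), so your smaller count is fine for part~(b).

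However, there is a genuine gap in part~(a). You correctly flag the obstacle (``the edge operation is global between two labels'') but do not resolve it, and the resolution is where the real work lies. The static type $\tau(v)$, indexed by the \emph{fixed} bag $X_{t(m(v))}$, cannot serve directly as a clique-width label in a bottom-up traversal of $T$. Concretely: two alive-but-not-yet-done vertices $(m,T)$ and $(m'',T)$ at different $m\ne m''$ may carry the same type $T$, yet require different future adjacencies; an ``add edges between label $T$ and label $T'$'' operation then creates spurious edges. The paper avoids this by using labels that \emph{evolve} along the traversal: the base colour $c^t_x$ at node $t$ encodes adjacency only to the \emph{current} bag $X_t$ through the cop-search slot function $p$, and the expression explicitly resets the slot $p(m')$ (recolouring the corresponding bit to $0$) at the moment $m'$ leaves the active bag (step~III of \Cref{clm:recurt}). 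In addition, the paper's label set is $(S\cup\{\perp\})\times B_{S,k}$: the $\perp$ marker distinguishes freshly created vertices (``initial colours'') from already-processed vertices (``running colours''), which is what allows edges between $V_{m_i}$ and older vertices to be added without hitting unintended same-coloured targets. That marker is exactly the $+1$ you are missing in the label count $(d+1)(\cdots)^{k+1}$. Your phrase ``recycling labels once $m$ is forgotten'' gestures at this, but the recycling must take the form of \emph{replacing} the adjacency-to-$X_{t(m)}$ encoding with adjacency-to-$X_t$ for the current $t$ and of resetting freed slots to a canonical value — otherwise the label count does not stay within the claimed bound. The paper's proof of part~(a) constructs a $(Q^\circ,\ell)$-expression for $G$ directly (then invokes \Cref{thm:hcwproduct}) rather than bounding $\cw(M_2)$; this is a cleaner path precisely because the evolving base colours align naturally with the $\ca H$-clique-width labels $(i,v)$.
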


\begin{proof}
We start with proving Part a) of the statement -- constructing the graph~$M_1$ of bounded clique-width.
Although, we remark that we could as well jump straight into a proof of Part b), and then conclude with an implied bound (albeit weaker) on the clique-width of~$M_2$.
We choose our gradual approach to the proofs of a) and b) also because we believe that it is more accessible for the readers.

By \Cref{thm:hcwproduct}, it suffices to construct a $(Q^\circ\!,\ell)$-expression $\varphi$ valued $G$, where $Q^\circ$ denotes the reflexive closure of~$Q$.

By a standard argument in this area, there exists a rooted tree decomposition $(T,\ca X)$ of width $k$ of the graph $M$, 
where $\ca X=\{X_t:t\in V(T)\}$, such that every node of $T$ has at most {\em two children}.
For a node $t\in V(T)$, let $X^+_t\subseteq V(M)$ denote the union of $X_s$ where $s$ ranges over $t$ and all descendants of~$t$.
Let $t^\uparrow$ denote the~parent node of $t$ in~$T$, and let $Y_t=X^+_t\setminus X_{t^\uparrow}$ denote the vertices of $M$ which
occur {\em only} in the bags of $t$ and its descendants.
For the root $r$ of $T$, let specially $Y_r=X^+_r=V(M)$.
Observe that all neighbours of a vertex $m\in Y_t$ in $V(M)\setminus Y_t$ must belong to the set $X_t\setminus Y_t$,
by the interpolation property of a tree decomposition.
We illustrate this important notation in \Cref{fig:treedecoxx}.

\begin{figure}[tb]
$$
\begin{tikzpicture}[scale=0.42]\small
  \tikzstyle{every node}=[draw, black, shape=circle, minimum size=3pt,inner sep=0pt, fill=black]
  \tikzstyle{every path}=[draw, color=black, thick]
  \node[label=left:$x$] at (-1.3,-1) (a) {};
  \node[label=left:$y$] at  (1.3,-1) (b) {};
  \node[label=left:$t$] at  (0,1) (c) {};
  \node[label=left:$t^\uparrow$] at  (0.5,3.9) (d) {};
  \draw (a)--(c)--(b) (c)--(d);
  \draw[dotted] (d)-- +(1.5,2.5)-- +(3,0);
  \draw (0,-3.2) node[draw=white, fill=white] {};
\end{tikzpicture} 
\qquad
\begin{tikzpicture}[scale=0.42]\small
  \tikzstyle{every node}=[draw, black, shape=circle, minimum size=3pt,inner sep=0pt, fill=none]
  \draw[rotate=30] (-1.3,0) ellipse (30mm and 20mm);
  \draw[rotate=-30] (1.3,0) ellipse (30mm and 20mm);
  \draw[rotate=0, thick] (0,1) ellipse (20mm and 33mm);
  \draw[rotate=-20] (0.25,3.9) ellipse (18mm and 31mm);
  \draw[dotted] (3,6) ellipse (22mm and 24mm);
  \draw[dotted,rotate=20] (6.2,1.5) ellipse (18mm and 31mm);
  \tikzstyle{every node}=[draw=none, black, shape=circle, minimum size=3pt,inner sep=0pt, fill=none]
  \node at (-3,-1) {$X_x$};
  \node at (3,-1) {$X_y$};
  \node at (-2.5,2.5) {$X_t$};
  \node at (-0.3,5.5) {$X_{t^\uparrow}$};
\end{tikzpicture} 
\quad
\begin{tikzpicture}[scale=0.42]\small
  \tikzstyle{every node}=[draw, black, shape=circle, minimum size=3pt,inner sep=0pt, fill=none]
  \draw[rotate=0, fill=red!30!white] (0,1) ellipse (20mm and 33mm);
  \draw[rotate=30, fill=green!30!white] (-1.3,0) ellipse (30mm and 20mm);
  \draw[rotate=-30, fill=green!30!white] (1.3,0) ellipse (30mm and 20mm);
  \draw[fill=red!30!white,draw=none] (-0.3,1.6) to[bend right=67] (2,1.1) to[bend right=27] cycle;
  \draw[fill=red!30!white,draw=none] (0,1.1) to[bend right=57] (1.6,0.9) to[bend right=27] cycle;
  \draw[fill=green!30!white,draw=none] (-2,1.15) to[bend left=12] (-0.3,1.65) to[bend left=16] (-0.06,4.33) to[bend right=27] (-1.6,3) to[bend right=12] cycle;
  \draw[rotate=0, thick] (0,1) ellipse (20mm and 33mm);
  \draw[rotate=30] (-1.3,0) ellipse (30mm and 20mm);
  \draw[rotate=-30] (1.3,0) ellipse (30mm and 20mm);
  \draw[rotate=-20] (0.25,3.9) ellipse (18mm and 31mm);
  \tikzstyle{every node}=[draw=none, black, shape=circle, minimum size=3pt,inner sep=0pt, fill=none]
  \node at (0,-0.7) {$Y_t$};
  \node at (0.7,2.3) {$X_t\!\setminus\!Y_t$};
  \node at (-2.5,2.5) {$X_t$};
  \node at (-0.3,5.5) {$X_{t^\uparrow}$};
\end{tikzpicture} 
$$
\caption{A fragment of a rooted tree decomposition $(T,\ca X)$ of a graph, showing a node $t$ with its parent $t^\uparrow$ and children $x,y$,
	and the corresponding bags $X_t$, $X_{t^\uparrow}$, $X_x$,~$X_y$ as subsets of the vertex set of the graph.
	On the right, we outline the sets $Y_t$ (green) -- all graph vertices occurring only in the bags of $t$ and its descendants,
	and $X_t\setminus Y_t$ (red) -- a separator between the set $Y_t$ and the rest of the graph.}
\label{fig:treedecoxx}
\end{figure}

Analogously to the treatment in the proof of \Cref{thm:hcwproduct}, we refer to the vertices of $G\subseteq Q\boxtimes M$
as to the pairs $[q,m]\in V(G)$ where $q\in V(Q)$ and $m\in V(M)$ in the natural correspon\-dence.
When constructing the expression $\varphi$ for the graph $G$, on a high level, we follow bottom-up the tree~$T$;
at a node $t\in V(T)$, we will construct an expression $\varphi^t$ whose value is precisely the subgraph $G^t$ of $G$ induced on 
the vertex set $V(G^t):=(V(Q)\times Y_t)\cap V(G)$.
By the previous, all neighbours of $V(G^t)$ in the rest of $G$ belong to the set $W_t:=\big(V(Q)\times(X_t\setminus Y_t)\big)\cap V(G)$, 
where $X_t\setminus Y_t$ is bounded in size and, furthermore, 
each vertex $[q,m]\in V(G^t)$ has neighbours $[q',m']\in V(G)$ of at most $\Delta+1$ distinct values of~$q'$.

It will thus be enough to encode, in the colour of each vertex $x=[q,m]\in V(G^t)$ within~$\varphi^t$, information about which vertices
of $W_t$ are actual neighbours of $x$ in $G$; moreover, these colours can be ``recycled'' such that we bound the total number of used ones in terms of~$\Delta$.
This way we will prove that the number of colours in our expression for $G$ would be bounded from above by the claimed expression.

Still on a high level, our construction of the $\varphi$ expression valued $G$ will look as follows.
\begin{enumerate}[{(A)}]
\item \label{it:firstVm}
We first create, in a trivial way, for each $m\in V(M)$ an expression $\nu_m$ for the edge-less vertex set $V_m:=\big(V(Q)\times\{m\}\big)\cap V(G)$, 
such that the labels in $V_m$ consist of the parameter vertices from $V(Q)$ in the natural correspondence, 
and of the initial colours described in further \Cref{sdef:colorx}.
\item\label{it:nextPhi}
We proceed bottom-up along the tree decomposition $(T,\ca X)$ of~$M$, starting with empty expressions for the (nonexisting) descendants of the leaves of $T$.
At each node $t\in V(T)$, we start the expression $\varphi^t$ with a disjoint union of the subexpressions of the children of~$t$.
Then, recalling that $Y_t$ are the vertices of $M$ not occurring in any ancestor bag of $X_t$,
we iteratively add the expressions $\nu_m$ for each $m\in Y_t\cap X_t$,
followed by adding all edges of $G^t$ incident to~$V_m\subseteq V(G^t)$, including those within $V_m$, based on previously assigned colours.
We finish $\varphi^t$ with recolouring all vertices of $V(G^t)$ to match the colours expected by further \Cref{sdef:colorx} at the parent $t^\uparrow$ of~$t$ (unless~$t=r$).
\end{enumerate}

\medskip

The crucial point of the whole proof is to define the colours used for the vertices in the expression $\varphi^t$ over $t\in V(T)$.
Let $S=\{1,2,\ldots,\Delta^2+1\}$, and let $s:V(Q)\to S$ be a proper colouring of the square graph~$Q^2$
-- such $s$ always exists since the maximum degree of $Q^2$ is at most~$\Delta^2$.
Furthermore, let $p:V(M)\to\{0,1,\ldots,k\}$ be a function such that $p$ is injective on each of the bags $X_t$ over $t\in V(T)$
-- such $p$ is easily constructed along a tree decomposition of width $k$ in the root-to-leaves order.
In fact, $p$ can alternatively be seen as a monotone cop search strategy on the tree decomposition $(T,\ca X)$ of~$M$.

\begin{subdefinition}\label{sdef:colorx}
Let $t\in V(T)$ and $x=[q,m]\in V(G^t)$ where $q\in V(Q)$ and, by the~definition of $G^t$, $m\in Y_t$.
The {\em base colour} of $x$ at the node~$t$ is the tuple $c^t_x=(b_0,b_1,\ldots,b_k)$ such that
\begin{itemize}
\item $b_j\in\{0,1\}^{S}$ for $0\leq j\leq k$, conveniently viewed as a function $b_j:S\to\{0,1\}$;
\item for every $j=p(m')$ where $m'\in X_t$, explicitly including $m'=m$ if $m\in X_t$, let
\begin{equation}\label{eq:bjdefi}
\mbox{$b_j(\alpha)=1$ iff $\{x,[q',m']\}\in E(G^t)$ for some $q'\in V(Q)$ such that $s(q')=\alpha$}
\end{equation}
(note that, as we will argue below, there is a unique choice of $m'$ for each such $j$, and a unique possible choice of such $q'$ for $\alpha\in S$);
\item for every $j\in\{0,1,\ldots,k\}\setminus p(X_{t})$, let $b_j(\alpha)=0$ for all $\alpha\in S$.
\end{itemize}
\smallskip
The actual colours of the vertices in $\varphi^t$ will be of the form $(\alpha,c)$ where $\alpha\in S\cup\{\perp\}$ and
$c\in\big(\{0,1\}^{S}\big)^{k+1}$ is a base colour as defined above. Namely,
\begin{itemize}
\item the pair $(\perp,c^t_x)$ is called the {\em running colour} of the vertex $x$ at node~$t$, and
\item if $m\in X_t\cap Y_t$, moreover, the pair $(s(q),c^t_x)$ is called the {\em initial colour} of the vertex $x$
(observe that there is a unique $t\in V(T)$ such that $m\in X_t\cap Y_t$, and hence no explicit reference to $t$ is necessary here).
\end{itemize}
\end{subdefinition}

The purpose of \Cref{sdef:colorx} will be more clear from the following claims.
Recall that $Q^\circ$ denotes the reflexive closure of the graph~$Q$.
\begin{claim}\label{clm:coledge}
Let $t\in V(T)$, and assume $x=[q,m]\in V(G^t)$ and $x'=[q',m']\in V(G^t)$ such that $m\in Y_t$ and $m'\in X_t\cap Y_t$ (possibly $m=m'$).
Let $c^t_x=(b_0,b_1,\ldots,b_k)$ be the base colour of $x$ at~$t$.
Then $xx'\in E(G^t)$, if and only if $qq'\in E(Q^\circ)$ and $b_{p(m')}(s(q'))=1$.
\end{claim}
\begin{proof}[Subproof.]
In one direction, if $xx'\in E(G^t)$, then $qq'\in E(Q^\circ)$ by the definition of $\boxtimes$,
and \Cref{sdef:colorx} states in \eqref{eq:bjdefi} that $b_{j}(\alpha)=1$ for $j=p(m')$ and $\alpha=s(q')$.

In the other direction, let us assume that $qq'\in E(Q^\circ)$ and $b_{p(m')}(s(q'))=1$; hence there exists $x''=[q'',m'']\in V(G^t)$ 
such that $m''\in X_t$, $p(m'')=p(m')$, $s(q'')=s(q')$, and $xx''\in E(G^t)$ by~\eqref{eq:bjdefi}.
Since $qq''\in E(Q^\circ)$ from $G^t\subseteq Q\boxtimes M$, and $s$ is a proper colouring of the square of~$Q$
(in which $q'q''$ is an edge), $s(q'')=s(q')$ implies $q'=q''$.
Since both $m',m''\in X_t$ and $p$ is injective on $X_t$, $p(m'')=p(m')$ likewise implies~$m''=m'$.
So, $x'=x''$ and we have $xx'\in E(G^t)$.
\end{proof}

\Cref{clm:coledge} can be interpreted such that, in order to determine whether \mbox{$xx'\in E(G^t)$}, it is sufficient to know the following information:
whether the parameter vertices $q,q'$ are adjacent, what is the colour $s(q')$ in $Q^2$ which is a part of the initial colour of the vertex $x'$,
and what is the base colour of the vertex $x$ at~$t$ which is maintained throughout the expression in the running colour of~$x$ at the node~$t$.

\begin{claim}\label{clm:colsize}
Let $B_{S,k}$ denote the set of the base colours used in \Cref{sdef:colorx}.
For given $\Delta\geq2$ (bounding $|S|$) and~$k$, we have $|B_{S,k}|\leq (|S|-1)^{(\Delta+1)(k+1)}\leq \Delta^{2(\Delta+1)(k+1)}$.
\end{claim}
\begin{proof}[Subproof.]
Observe that, for each $x=[q,m]\in V_m$ as in \Cref{sdef:colorx} and each $0\leq j\leq k$, 
the function $b_j\in\{0,1\}^{S}$ has at most $\Delta+1$ nonzero values -- one for each vertex in the closed neighbourhood of $q$ in the graph~$Q$ by~\eqref{eq:bjdefi}.
So, let $B_S\subseteq\{0,1\}^{S}$ be the subset of such functions with at most $\Delta+1$ nonzero values.
By simple calculus, we have $|B_S|\leq{|S|\choose0}+{|S|\choose1}+\ldots+{|S|\choose\Delta+1}<(|S|-1)^{\Delta+1}\leq\Delta^{2(\Delta+1)}$ 
for~$\Delta^2+1\geq|S|>\Delta\geq2$.
Altogether, $|B_{S,k}|=|B_S|^{k+1} \leq \Delta^{2(\Delta+1)(k+1)}$.
\end{proof}

Let $\Gamma:=(S\cup\{\perp\})\times B_{S,k}$ be the full set of colours -- the initial and running ones
for colouring vertices in our expressions based on \Cref{sdef:colorx}, and $\ell=|\Gamma|$.

Following the sketch in Items (\ref{it:firstVm}) and (\ref{it:nextPhi}) above,
we now formally give a $(Q^\circ\!,\ell)$-expression $\varphi=\varphi^r$ (where $r$ is the root of~$T$) valued~$G$,
constructed recursively along the nodes $t\in V(T)$ from subexpressions $\varphi^t$ valued $G^t$
where, as defined above, $G^t=G[V(Q)\times Y_t]$.

\begin{claim}\label{clm:recurt}
For every node $t\in V(T)$, there is a $(Q^\circ\!,\ell)$-expression $\varphi^t$ valued $G^t$ such that the following holds.
For each vertex $x=[q,m]\in V(G^t)$, the parameter vertex of $x$ in $\varphi^t$ is~$q$, and if $t\not=r$, 
the resulting colour of $x$ given by $\varphi^t$ equals the running colour (by \Cref{sdef:colorx}) of $x$ at the parent node~$t^\uparrow$ of~$t$.
\end{claim}

\begin{proof}[Subproof.]
We proceed with $t\in V(T)$ by structural induction on the tree $T$, in the leaf-to-root direction, 
and giving the construction of the expression alongside with a proof of correctness
(it may be useful to recall \Cref{fig:treedecoxx} at this point).

\begin{enumerate}[I.]\parskip3pt
\item 
If $t$ is a leaf, then we start with an empty expression $\psi_0$ and $G_0^t=\emptyset$.
If $t$ has one child~$s$, then we take the expression $\psi_0:=\varphi^s$ already constructed at~$s$, and $G_0^t=G^{s}$.
If $t$ has two children~$s,s'$, then we let $\psi_0$ be the union operation over the expressions $\varphi^s$ and~$\varphi^{s'}$,
and $G_0^t=G^{s}\,\dot\cup\,G^{s'}$.
We have the following:
\begin{itemize}
\item The graph $G_0^t$ is an induced subgraph of $G$ by the inductive assumption and, in the last (union) case,
since the graph $M$ has no edges between the sets $Y_s$ and $Y_{s'}$ by the interpolation property of a tree decomposition,
and so there are no edges between the disjoint subgraphs $G^{s}$ and $G^{s'}$ in~$G$.
\item Every vertex $x\in V(G_0^t)$, by the inductive assumption on $\varphi^s$ and possibly~$\varphi^{s'}$,
gets colour in the expression $\psi_0$ equal to the running colour of $x$ at~$t$.
\end{itemize}

\item\label{it:mored}
Then we let $Y'_t:=Y_t\cap X_t$; informally, $Y'_t$ are the vertices of $M$ whose last bag in~$T$ is right at~$t$.
We choose an arbitrary order $Y'_t=(m_1,\ldots,m_a)$, $a=|Y'_t|$; possibly having $a=0$ if $Y'_t=\emptyset$.
For $i=1,\ldots,a$, we repeat the following:
\begin{enumerate}[a)]
\item We start the expression $\psi_i$ by making a union of previous $\psi_{i-1}$ (if nonempty) and of the
expression $\nu_{m_i}$ trivially constructing the edge-less graph on~$V_{m_i}=\big(V(Q)\times\{m_i\}\big)\cap V(G)$.
Each vertex $x=[q,m_i]\in V_{m_i}$ is created in $\nu_{m_i}$ with the parameter vertex $q$ and the initial colour of~$x$ in~$G$ as defined in \Cref{sdef:colorx}.
\item\label{it:added}
Let $j=p(m_i)$. Looping over all $\beta\in S$
and writing, for simplicity, $*$ for an arbitrary value, we add in $\psi_i$ edges between each running colour of the form
$(\perp,\>*,\ldots,b_j,\ldots,*)\in \Gamma$ where $b_j(\beta)=1$ and each initial colour $(\beta,\>*,\ldots,*)\in \Gamma$.

Note that only vertices of $V_{m_i}$ may currently hold in $\psi_i$ colours of the initial type $(\alpha,*,\ldots,*)$ where $\alpha\not=\perp$.
So, by \Cref{clm:coledge} and \Cref{def:hcw}\,\ref{it:Hcwedges}), the described operations in $\psi_i$ create precisely the edges between the sets
$\big(V(G_0^t)\cup V_{m_1}\cup\dots\cup V_{m_{i-1}}\big)$ and $V_{m_i}$ which exist in the graph~$G^t\subseteq_i G$.
\item\label{it:added0}
Similarly to point \ref{it:added});%
\footnote{We may have actually joined points \ref{it:added}) and \ref{it:added0}) into one uniform batch of recolourings, but we prefer to state them separately for clarity.}
looping over all $\beta\in S$, we add in $\psi_i$ edges between each colour of the form $(\alpha,\>*,\ldots,b_j,\ldots,*)\in \Gamma$
where $\alpha\not=\perp$ is arbitrary, $j=p(m_i)$ and $b_j(\beta)=1$, and each colour $(\beta,\>*,\ldots,*)\in \Gamma$.

Again, by \Cref{clm:coledge} and \Cref{def:hcw}\,\ref{it:Hcwedges}), the described operations in $\psi_i$ create precisely the edges
which are within the set $V_{m_i}$ and which exist in~$G^t\subseteq_i G$.
\item\label{it:rperp}
We finish, after \ref{it:added0}), the expression $\psi_i$ with the operations of recolouring every initial colour $c=(\beta,\>b_0,\ldots,b_k)$,
where $\beta\in S$ and $(b_0,\ldots,b_k)\in B_{S,k}$ arbitrary, to the colour $c'=(\perp,\>b_0,\ldots,b_k)$.
This way, as stated in \Cref{sdef:colorx}, every vertex $x\in V_{m_i}$ turns from its initial colour to the running colour of $x$ at~$t$,
which re-establishes the assumptions for the next iteration of point \ref{it:mored}.

Overall, the value of $\psi_i$ is the subgraph of $G$ induced on $\big(V(G_0^t)\cup V_{m_1}\cup\dots\cup V_{m_{i}}\big)$.
\end{enumerate}
\item\label{it:rhash}
Having finished with the $a$ iterations of point \ref{it:mored}, we are present with the expression $\psi_a$ whose value is 
the subgraph of $G$ induced on $\big(V(G_0^t)\cup V_{m_1}\cup\dots\cup V_{m_{a}}\big)$ -- in other words, $\psi_a$ is valued $G^t$ as desired.

To fulfill the remaining condition of \Cref{clm:recurt} -- that each vertex $x\in V(G^t)$ should get the running colour of $x$ at the parent node of~$t$,
it is by \Cref{sdef:colorx} enough to add the following sequence of recolouring operations.
The sought expression $\varphi^t$ results from $\psi_a$ by performing, for every $i=1,\ldots,a$, the operations of recolouring
of every colour $c=(\perp,\>b_0,\ldots,b_j,\ldots,b_k)\in \Gamma$ where $j=p(m_i)$ and $b_0,\ldots,b_k$ are arbitrary,
to colour $c'=(\perp,\>b_0,\ldots,b_j^o,$ $\ldots,b_k)\in \Gamma$ where $b_j^o(\alpha)=0$ for all~$\alpha\in S$.
\qed
\end{enumerate}\let\qedsymbol\relax
\end{proof}%

Finally, at the root $r$ of $T$, the constructed $(Q^\circ\!,\ell)$-expression $\varphi:=\varphi^r$ is valued $G^r=G$ by \Cref{clm:recurt},
as needed in this proof.
The last bit in Part a) is to give an upper bound on $\ell=|\Gamma|=|S\cup\{\perp\}|\cdot|B_{S,k}|\leq (\Delta^2+2)\cdot\Delta^{2(\Delta+1)(k+1)}$ using \Cref{clm:colsize}.

The last bound on $\ell$ can be further improved if we know that the square of $Q$ is $d$-colourable
-- hence we can use $S=\{1,2,\ldots,d\}$, where $d$ can be as low as $\Delta+1$ in the best case.
Then the bound of \Cref{clm:colsize} becomes $\ell=|S\cup\{\perp\}|\cdot|B_{S,k}|\leq (|S|+1)\cdot(|S|-1)^{(\Delta+1)(k+1)}=(d+1)(d-1)^{(\Delta+1)(k+1)}$.
At the same time, there is a trivial upper bound of $|B_{S,k}|\leq2^{|S|(k+1)}$ which gives 
$\ell\leq(|S|+1)\cdot2^{|S|(k+1)}=(d+1)2^{d(k+1)}$.
A combination of these two bounds gives the improved bound stated in the sequel of \Cref{thm:fromprods}.

\medskip

Regarding Part b) of the Theorem -- constructing the graph $M_2$, we remark that we cannot simply employ \Cref{thm:hcwproduct} since 
that would give us only a factor $M_1$ of bounded clique-width as before, but potentially containing unbounded bipartite cliques.
We instead provide an ad-hoc construction of the desired factor $M_2$ which is based on the structure and the initial colours
used in the $(Q^\circ\!,\ell)$-expression $\varphi^r$ valued $G$ from \Cref{clm:recurt}.

Let again $s:V(Q)\to S$ be a proper colouring of the square graph~$Q^2$ of~$Q$.
Let now $\Gamma':=S\times B_{S,k}$. (Comparing this definition to previous $\Gamma$, the only difference is in the omitted symbol~$\perp$.)
Recall also the rooted tree decomposition $(T,\ca X)$ of width $k$ of the graph $M$, and the function $p:V(M)\to\{0,1,\ldots,k\}$ which is injective on each of the bags.
Recall that $Y'_t:=Y_t\cap X_t$ for $t\in V(T)$ denotes the set vertices of the graph $M$ whose last bag in~$T$ (going bottom-up) is right at~$t$.
When defining the expression $\varphi$ of~$G$, we have ordered the members of each $Y'_t$ where $a=|Y'_t|$ as $Y'_t=(m_1,\ldots,m_a)$.

Using this and the native tree order of $T$, we (uniquely) orient all edges of $M$ as follows:
for every $mm'\in E(M)$, which in particular means that $m$ and $m'$ share a bag of the decomposition, we define $m\preceq m'$ if, 
either $m\in Y'_t$ and $m'\in Y'_s$ where $t\not=s$ is a descendant of $s$ in~$T$,
or $m,m'\in Y'_t=(m_1,\ldots,m_a)$ and $m=m_i$, $m'=m_j$ for some~$i\leq j$.

We are going to define a graph $M_2$ such that $V(M_2):=V(M)\times\Gamma'$, 
and $E(M_2)\subseteq F$ where $F=\{\{(m,\gamma),(m',\gamma')\}: m=m'\,\vee\,mm'\in E(M),\>\gamma,\gamma'\in\Gamma'\}$.
This setting, in particular, by \Cref{clm:colsize} clearly implies that the tree-width of $M_2$ is going to be at most 
\begin{equation}\label{eq:colsettw}
\mathop{tw}(M_2)\leq(k+1)\cdot|\Gamma'|-1\leq (k+1)|S|\cdot|B_{S,k}| \leq (k+1)(\Delta^2+1)\cdot\Delta^{2(\Delta+1)(k+1)}
.\end{equation}
The edge set $E(M_2)\subseteq F$ is defined precisely as follows. Let $m,m'\in V(M)$ be such that $m=m'$ or $mm'\in E(M)$, and $m\preceq m'$. 
Let $\gamma,\gamma'\in\Gamma'$ be such that $\gamma=(\alpha,b_0,\ldots,b_k)$ and $\gamma'=(\alpha',b'_0,\ldots,b'_k)$, and $(m,\gamma)\not=(m',\gamma')$. Then%
\begin{equation}\label{eq:edgeM2}
 \mbox{$\{(m,\gamma),(m',\gamma')\}\in E(M_2)$, if and only if $b_{j}(\alpha')=1$ where $j=p(m')$}.
\end{equation}
(The readers are encouraged to compare this definition to \eqref{eq:bjdefi} above.)

\medskip
The last task is to identify an isomorphism of $G\subseteq Q\boxtimes M$ to an induced subgraph of the product $Q\boxtimes M_2$.
To each vertex $x\in V(G)$ such that $x=[q,m]$ in $Q\boxtimes M$, we assign a vertex of $Q\boxtimes M_2$
by $\iota(x)=[q,(m,\gamma_x)]$ where $\gamma_x=(s(q),c_x)\in\Gamma'$ is given by \Cref{sdef:colorx} as the initial colour of $x$ in~$G$.
Let $G'$ denote the induced subgraph of $Q\boxtimes M_2$ which is the image of~$\iota$.
We now prove that $\iota$ is an isomorphism of $G$ to~$G'$.

Observe that all potential edges of $G$ and of $G'$ are of the form $e=xx'=\{[q,m],[q',m']\}$ in $G$ 
and $\iota$-corresponding $e'=\{[q,(m,\gamma_{x})],[q',(m',\gamma_{x'})]\}$ in the image~$G'$, such that, by the definition of~$\boxtimes$,
$qq'\in E(Q^\circ)$, and $mm'\in E(M)$ or~$m=m'$.
Without loss of generality, we will thus further assume $qq'\in E(Q^\circ)$, and $m\preceq m'$ up to symmetry.

By \Cref{clm:coledge}, $xx'\in E(G^t)=E(G)$ if and only if $b_{p(m')}(s(q'))=1$ where $b_{p(m')}\in \gamma_{x}$.
Since $G'\subseteq_i Q\boxtimes M_2$, we have that $e'\in E(G')$ if and only if $\{(m,\gamma_{x}),(m',\gamma_{x'})\}\in E(M_2)$.
By~\eqref{eq:edgeM2}, $\{(m,\gamma_{x}),(m',\gamma_{x'})\}\in E(M_2)$ if and only if $b_{p(m')}(\alpha')=1$, 
where $\gamma_{x'}=(\alpha',*)=(s(q'),*)$ by the definition of~$\iota$.
Altogether, $e=xx'\in E(G)$ if and only if $e'\in E(G')$, as required.

Together with the bound \eqref{eq:colsettw} on the tree-width of $M_2$, the proof of Part b) is finished.
Although, again as in Part a), the bound \eqref{eq:colsettw} can be further improved if we know that the square of the graph $Q$ is $d$-colourable,
using the set $S=\{1,2,\ldots,d\}$.
Then we get $\mathop{tw}(M_2)\leq (k+1)|S|\cdot|B_{S,k}|\leq (k+1)d(d-1)^{(\Delta+1)(k+1)}$ and, at the same time,
$\mathop{tw}(M_2)\leq (k+1)d2^{d(k+1)}$.
These, combined together, give the improved bound stated in the sequel of \Cref{thm:fromprods}.
\end{proof}

\begin{corollary}\label{cor:fromprods}
Assume that a graph $G$ is a subgraph (not necessarily induced) of the strong product $G\subseteq P\boxtimes M$ where $P$ is a path 
and $M$ is a simple graph of tree-width at most~$k$. 
Then there exists a graph $M_1$ of clique-width at most $\ell=4\cdot8^{k+1}=2^{3k+5}$,
and a graph $M_2$ of tree-width at most $3(k+1)\cdot8^{k+1}$, such that $G$ is isomorphic to {\em induced} subgraphs
of each of the strong products~$P\boxtimes M_1$ and~$P\boxtimes M_2$.
In particular, $\hcw{\ca P^\circ\!}(G)\leq\ell=2^{3k+5}$.
\end{corollary}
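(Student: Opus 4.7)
The plan is to derive the corollary as a direct specialization of \Cref{thm:fromprods} to the case $Q=P$, invoking the improved bounds in the sequel of that theorem. For a path $P$, the maximum degree is $\Delta=2$, and the square $P^2$ is 3-colourable (simply colour the vertices of $P$ cyclically by $1,2,3$ along the path). So we may take $d=3$ in the improved formulation.

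Next I would substitute these parameters into the enhanced bounds given in the sequel of \Cref{thm:fromprods}. For the clique-width factor, the bound reads
\[
\ell \;\leq\; (d+1)\cdot\bigl(\min((d-1)^{\Delta+1},\,2^d)\bigr)^{k+1}
\;=\; 4\cdot\bigl(\min(2^3,\,2^3)\bigr)^{k+1}
\;=\; 4\cdot 8^{k+1},
\]
and rewriting $4\cdot 8^{k+1}=2^2\cdot 2^{3(k+1)}=2^{3k+5}$ matches the stated bound. Analogously, for the tree-width factor the bound
\[
t \;\leq\; (k+1)\,d\cdot\bigl(\min((d-1)^{\Delta+1},\,2^d)\bigr)^{k+1}
\;=\; 3(k+1)\cdot 8^{k+1}
\]
is immediate. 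Both $M_1$ and $M_2$ then satisfy $G\subseteq_i P\boxtimes M_1$ and $G\subseteq_i P\boxtimes M_2$ by \Cref{thm:fromprods}.

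Finally, for the ``In particular'' clause, I would observe that the reflexive closure $P^\circ$ of the path $P$ is a reflexive path and hence lies in the family $\ca P^\circ$. The secondary conclusion of \Cref{thm:fromprods}\,a) therefore yields $\hcw{\ca P^\circ\!}(G)\leq\ell=2^{3k+5}$. No separate obstacle arises: the proof is purely a bookkeeping substitution, and the only non-routine input is the observation that $\chi(P^2)=3$ which makes both factors in the $\min$ coincide at $8$, yielding the clean bound.
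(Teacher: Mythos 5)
Your proposal is correct and matches the paper's own proof exactly: both specialize \Cref{thm:fromprods} to $Q=P$, observe that $P^2$ is 3-colourable (so $d=3$, $\Delta=2$, both terms in the $\min$ equal $8$), and substitute into the improved bounds. The paper's proof is a one-liner; your version merely spells out the arithmetic.
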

\begin{proof}
We observe that the square of $P$ is always $3$-colourable (``modulo~3''), and so we use the improved bounds of \Cref{thm:fromprods} with $d=3$.
\end{proof}

\section{Induced Planar Graph Product Structure}
\label{sec:induplanar}

One may see that the price we pay for the generality of \Cref{thm:fromprods} is substantial;
in order to obtain $G$ as an induced subgraph of the strong product $Q\boxtimes M_2$, the tree-width
of $M_2$ can be up to exponential in the original tree-width of $M$.
In the case of the traditional planar product structure one would get from \Cref{cor:fromprods} directly
a factor of tree-width up to possibly $2^{23}=8\,388\,608$ which is huge.
However, the case of planar graphs is quite special, and with an ad-hoc proof we get the tree-width bound down a lot.

\begin{theorem}\label{thm:onlyplanar}
Every simple planar graph is isomorphic to an \emph{induced} subgraph of the strong product $P\boxtimes M$ where $P$ is a path and $M$ is of tree-width at most~$39$.
\end{theorem}

\begin{proof}
Let $\bar G$ be any simple planar graph and $G$ be a planar triangulation such that $\bar G\subseteq_i G$,
where $G$ is obtained by suitably adding vertices and edges into faces of $\bar G$.
By transitivity of the induced subgraph relation, it suffices to prove the statement for the triangulation~$G$.

If $G$ is a graph and $T\subseteq G$ is an arbitrary BFS (breadth-first-search) tree of $G$, then a path $Q\subseteq G$
is called {\em$T$-vertical} if $Q$ is a subpath of some leaf-to-root path of~$T$.
If $T$ is implicitly clear from the context, we shortly say that $Q$ is {\em vertical} in~$G$.
For a start, we fix an embedding of $G$ in the plane, and associate $G$ with one of its BFS trees $T$ rooted in a vertex of the outer triangular face.

Our proof suitably adapts some core ideas of proofs of the traditional planar graph product structure \cite{DBLP:journals/jacm/DujmovicJMMUW20,DBLP:journals/combinatorics/UeckerdtWY22}.
The cornerstone is the following technical claim which carefully refines (mainly in \Cref{clm:6paths}\,\ref{it:nbatends})) analogous claims of previous proofs of the Planar graph product structure theorem.
See also \Cref{fig:recdecomp6}.
If $C\subseteq G$ is a cycle, we denote by $U_C\subseteq V(G)$ the subset of those vertices of $G$ which are embedded inside the bounded face of~$C$
(in particular, $U_C\cap V(C)=\emptyset$).
Note that $G[U_C]$ then denotes the subgraph of $G$ induced by the vertices embedded inside~$C$.

\begin{claim}\label{clm:6paths}
Let $C\subseteq G$ be a cycle and assume that there are $6$ disjoint paths $Q_i\subseteq C$ for $1\leq i\leq 6$ 
-- some of them possibly being one-vertex or empty, 
such that each nonempty $Q_i$ is vertical in~$G$ and they together partition the vertex set of $C$; $V(Q_1)\cup\ldots\cup V(Q_6)=V(C)$.
If $U_C\not=\emptyset$, then there exist another three disjoint paths $Q_j\subseteq G[U_C]$ for $7\leq j\leq 9$ which are vertical in~$G$,
and again possibly one-vertex or empty, such that the following properties~hold:
\begin{enumerate}[a)]
\item\label{it:dividD}
Denoting by $W:=V(C)\cup V(Q_7\cup Q_8\cup Q_9)$, for every vertex $u\in U_C\setminus W$
there exists a cycle $D_u\subseteq G[W]$ in the subgraph induced by $W$ such that $D_u\not=C$ and $D_u$ satisfies the following;
\begin{itemize}
\item the edges $E(D_u)\setminus E(C)$ constitute a nonempty path embedded inside the cycle~$C$,
\item $u\in U_{D_u}$ and $U_{D_u}\cap W=\emptyset$ (that is, the vertex $u$ is embedded inside the cycle $D_u$, but no vertex of $W$ is embedded inside $D_u$), and
\item $D_u$ intersects at most $6$ of the paths $Q_i$, $1\leq i\leq 9$, each in one connected subpath.
\end{itemize}
\item\label{it:nbatends}
For every $j\in\{7,8,9\}$, if a vertex $v\in V(Q_j)$ is adjacent to any vertex in $V(Q_1\cup\ldots\cup Q_{j-1})$, 
then $v$ is one of the (at most two) ends of~$Q_j$.
\item\label{it:noQ9}
If $Q_9\not=\emptyset$, then there exists $1\leq h\leq 6$ such that $G$ has no edge between $V(Q_9)$ and $V(Q_h)$ and,
for every cycle $D_u$ from \ref{it:dividD}), $Q_9\cap D_u=\emptyset$ or $Q_h\cap D_u=\emptyset$.
\end{enumerate}
The paths $Q_7,Q_8,Q_9$ obtained above will be called {\em primeval vertical paths} of~$C$ in~$G$, 
and each of the cycles $D_u$ from \ref{it:dividD}) will be called a {\em child cycle} of~$C$ in~$G$.
\end{claim}

\begin{proof}[Subproof.]
We first consider a degenerate case -- that there exists an edge $f\in E(G)$ with both ends in $V(C)$ and $f$ embedded inside the bounded face of~$C$.
Let $D_0,D_1$ be the two cycles sharing $f$ in $C+f$, the graph obtained by adding $f$ to the cycle~$C$.
Then we may choose empty paths $Q_7=Q_8=Q_9=\emptyset$, so $W=V(C)$, and \ref{it:dividD}) will be trivially satisfied for every component of $G[U_C]$
with either $D_0$ or~$D_1$, while the rest is void now.
Hence we further assume that no such chord $f$ with both ends in $C$ exists and, consequently, that the induced subgraph $G[U_C]$
(the one drawn inside~$C$) is connected since $G$ is a triangulation.

Up to symmetry, we may assume that for some $m\leq6$ the paths $Q_1,\ldots,Q_m$ are nonempty and occurring in this cyclic order on~$C$, while
$Q_{i}=\emptyset$ for~$m<i\leq 6$.
We may also, without loss of generality, assume that $m\geq3$, or we artificially and arbitrarily split the path $Q_1$ (or~$Q_2$) into up to three subpaths.

Recall that the root of our BFS tree $T$ is outside of $U_C$.
For every vertex $u\in U_C$, there is a unique vertical path $R\subseteq G[U_C]\cap T$ starting in $u$ and ending in a vertex $w\in U_C$, 
such that $w$ is the {\em only} vertex of $R$ adjacent to~$V(C)$.
We denote it by $R[u]=R$, define $\iota(u)\in\{1,\ldots,m\}$ as the least index for which the end $w$ of $R[u]$ is 
adjacent to some $t\in V(Q_{\iota(u)})$ where $t$ is chosen arbitrarily from $V(Q_{\iota(u)})$, 
and set $R^+[u]=R[u]+wt$ which gives that $R^+[u]$ ends on~$C$.
For each $i\in\{1,\ldots,m\}$ and every $u\in V(Q_i)$ we set $\iota(u)=i$, too, and we will further call $\iota(u)$ the {\em colour} of~$u$.
We also, for technical reasons, define $R[u]=\emptyset$ and $R^+[u]=\{u\}$ for all~$u\in V(C)$.
Note that not every colour among $1,\ldots,m$ necessarily has to occur in~$U_C$ with this definition.
(At this point we slightly differ from the choice of vertex colours of $U_C$ in previous similar proofs, 
such as in \cite{DBLP:journals/jacm/DujmovicJMMUW20,DBLP:journals/combinatorics/UeckerdtWY22}, and we do so with the intention to fulfill condition \ref{it:nbatends}).)

Observe that the subset~$X_i\subseteq V(C)\cup U_C$ induced by each colour $i\in\{1,\ldots,m\}$ as above is connected in $G$ by the definition of~$\iota$.
We can hence obtain a minor $G'$ of $G$ by contracting each $X_i$ into one vertex $x$ of colour $\iota(x)=i$.
Since we assume simple graphs, loops and parallel edges created by the contractions are deleted in $G'$,
and since $G$ is a triangulation, so is~$G'$.
In particular, the cycle $C$ is contracted down to an $m$-cycle $C'\subseteq G'$,
and all edges incident to $U_C$ in $G$ are removed in $G'$ or contracted down to $m-3$ chords triangulating the bounded face of~$C'$ if~$m\geq4$.
Let $B'$ denote $C'$ together with the latter $m-3$ chords.

If $m\in\{4,5\}$, we choose $x_7x_8\in E(B')$ such that it is one of the chords, that is, $x_7x_8\in E(B')\setminus E(C')$.
If $m=3$, we choose $x_7x_8\in E(B')$ arbitrarily.
Since $B'$ is obtained by contractions, there exist $u_7,u_8\in V(C)\cup U_C$ such that $\iota(u_7)=\iota(x_7)$, $\iota(u_8)=\iota(x_8)$ and $u_7u_8\in E(G)$.
Among all such edges $u_7u_8\in E(G)$ satisfying $\iota(u_7)=\iota(x_7)$, $\iota(u_8)=\iota(x_8)$ we choose one minimising the size of $R^+[u_7]\cup R^+[u_8]$.
Note that $R[u_7]\cup R[u_8]\not=\emptyset$, since otherwise~$\{u_7,u_8\}\subseteq V(C)$ which is the degenerate case handled at the beginning.
By our minimal choice, 
the vertices of $V(R[u_8])\setminus\{u_8\}$ are not adjacent to $R[u_7]$,
and hence we fulfill condition~\ref{it:nbatends}) with $Q_7=R[u_7]$, $Q_8=R[u_8]$ and $Q_9=\emptyset$.
Condition \ref{it:noQ9}) is void.
Regarding condition~\ref{it:dividD}), for each $u$ we can clearly choose one of the two cycles $D_0,D_1\subseteq G[W]$
which ``halve'' $C$ through the path $R^+[u_7]\cup R^+[u_8]$,
and each of~$D_0$ and $D_1$ intersects at most $4$ of the paths $Q_1,\ldots,Q_m$ plus the two paths~$Q_7,Q_8$.

\begin{figure}[t]
$$
\begin{tikzpicture}[xscale=0.5, yscale=0.6]
  \tikzstyle{every node}=[draw, black, shape=circle, minimum size=2.8pt,inner sep=0pt, fill=black]
  \tikzstyle{every path}=[draw, color=black]
  \draw[line width=1.2pt,dotted] (-0.5,0)--(0.5,0) to [bend right=20] (3,1.5)--(3.5,2.2) to [bend right=20] (3.5,5)--(3,5.7) to [bend right=20] (0.5,7)
	--(-0.5,7) to [bend right=20] (-3,5.7)--(-3.5,5) to [bend right=20] (-3.5,2.2)--(-3,1.5) to [bend right=20] (-0.5,0) ;
  \draw[->] (0.5,0) node{} to [bend right=20] (3,1.5) node{} to +(0.5,0);
  \draw[->] (3.5,2.2) node{} to [bend right=20] (3.5,5) node{} to +(0.3,0.3);
  \draw[->] (3,5.7) node{} to [bend right=20] (0.5,7) node{} to +(0,0.5);
  \draw[->] (-0.5,0) node{} to [bend left=20] (-3,1.5) node{} to +(-0.5,0);
  \draw[->] (-3.5,2.2) node{} to [bend left=20] (-3.5,5) node{} to +(-0.3,0.3);
  \draw[->] (-3,5.7) node{} to [bend left=20] (-0.5,7) node{} to +(0,0.5);
  \node[draw=none,fill=none] at (-4.4,6.5) {$C$};
  \node[draw=none,fill=none] at (-2,7) {$Q_3$};
  \node[draw=none,fill=none] at (-4.4,4) {$Q_2$};
  \node[draw=none,fill=none] at (-2,0.15) {$Q_1$};
  \node[draw=none,fill=none] at (2,7) {$Q_4$};
  \node[draw=none,fill=none] at (4.4,4) {$Q_5$};
  \node[draw=none,fill=none] at (2.1,0.15) {$Q_6$};
  \node[draw=none,fill=none] at (-0.4,1.6) {$Q_7$};
  \node[draw=none,fill=none] at (2,2.8) {$Q_8$};
  \node[draw=none,fill=none] at (-2,4.25) {$Q_9$};
  \tikzstyle{every node}=[draw, black, shape=circle, minimum size=2.2pt,inner sep=0pt, fill=white]
  \footnotesize
  \draw[draw=none,fill=gray!20] (0,4.6)--(0.6,5.2)--(-0.6,5.2)--cycle;
  \node[label=right:$\!u_7'$] at (0,4.6) (uu7) {};  \node[label=above:$u_8'$] at (0.6,5.2) (uu8) {};  \node[label=above:$u_9'$] at (-0.6,5.2) (uu9) {};
  \tikzstyle{every node}=[draw, black, shape=circle, minimum size=2.8pt,inner sep=0pt, fill=black]
  \node[label=right:$u_7$] at (0,3.8) (u7) {};  \node[label=below:$u_8$] at (0.8,2.5) (u8) {};
  \node[label=left:$v_8\>$] at (-0.2,2.5) (v8) {};  \node[label=below:$u_9$] at (-0.8,3.9) (u9) {};
  \draw[dashed,->,>=stealth] (uu7)--(uu8)--(uu9)--(uu7)--(u7);
  \draw[dashed,->,>=stealth] (uu8) to[bend left=14] (u8);  \draw[dashed,->,>=stealth] (uu9) to[bend left=14] (u9);
  \tikzstyle{every path}=[draw,thick, color=black]
  \tikzstyle{every node}=[draw, black, shape=circle, minimum size=2.5pt,inner sep=0pt, fill=black]
  \draw (u7) to[bend left=12] (v8) to[bend left=12] (-1.9,1.1) node{} --(-2.5,0.95) node{};
  \draw (u8) to[bend right=16] (3.1,2.7) node{} --(3.7,2.9) node{};
  \draw (u9) to[bend left] (-2.2,5.7) node{} --(-2.5,6.2) node{};
  \draw[dashed] (u7)--(u9) (u8)--(v8) ;
  \tikzstyle{every path}=[draw,thick, color=black]
  \tikzstyle{every node}=[draw, white, shape=circle, minimum size=2.2pt,inner sep=0pt, fill=white]
  \node[white] at (-2.2,1) {};  \node[white] at (3.4,2.8) {};  \node[white] at (-2.36,5.95) {};
\end{tikzpicture} 
\qquad
\begin{tikzpicture}[xscale=0.5, yscale=0.6]
  \tikzstyle{every node}=[draw, black, shape=circle, minimum size=2.8pt,inner sep=0pt, fill=black]
  \tikzstyle{every path}=[draw, color=black]
  \draw[line width=1.2pt,dotted] (-0.5,0)--(0.5,0) to [bend right=20] (3,1.5)--(3.5,2.2) to [bend right=20] (3.5,5)--(3,5.7) to [bend right=20] (0.5,7)
	--(-0.5,7) to [bend right=20] (-3,5.7)--(-3.5,5) to [bend right=20] (-3.5,2.2)--(-3,1.5) to [bend right=20] (-0.5,0) ;
  \draw[->] (0.5,0) node{} to [bend right=20] (3,1.5) node{} to +(0.5,0);
  \draw[->] (3.5,2.2) node{} to [bend right=20] (3.5,5) node{} to +(0.3,0.3);
  \draw[->] (3,5.7) node{} to [bend right=20] (0.5,7) node{} to +(0,0.5);
  \draw[->] (-0.5,0) node{} to [bend left=20] (-3,1.5) node{} to +(-0.5,0);
  \draw[->] (-3.5,2.2) node{} to [bend left=20] (-3.5,5) node{} to +(-0.3,0.3);
  \draw[->] (-3,5.7) node{} to [bend left=20] (-0.5,7) node{} to +(0,0.5);
  \node[draw=none,fill=none] at (-4.4,6.5) {$C$};
  \node[draw=none,fill=none] at (-2,7) {$Q_3$};
  \node[draw=none,fill=none] at (-4.4,4) {$Q_2$};
  \node[draw=none,fill=none] at (-2,0.15) {$Q_1$};
  \node[draw=none,fill=none] at (2,7) {$Q_4$};
  \node[draw=none,fill=none] at (4.4,4) {$Q_5$};
  \node[draw=none,fill=none] at (2.1,0.15) {$Q_6$};
  \node[draw=none,fill=none] at (-0.4,1.6) {$Q_7$};
  \node[draw=none,fill=none] at (2.4,3.3) {$Q_8$};
  \node[draw=none,fill=none] at (-2.4,3.5) {$Q_9$};
  \tikzstyle{every node}=[draw, black, shape=circle, minimum size=2.2pt,inner sep=0pt, fill=white]
  \footnotesize
  \draw[draw=none,fill=gray!20] (0,4.6)--(0.6,5.2)--(-0.6,5.2)--cycle;
  \node[label=right:$\!u_7'$] at (0,4.6) (uu7) {};  \node[label=above:$u_8'$] at (0.6,5.2) (uu8) {};  \node[label=above:$u_9'$] at (-0.6,5.2) (uu9) {};
  \tikzstyle{every node}=[draw, black, shape=circle, minimum size=2.8pt,inner sep=0pt, fill=black]
  \node[label=left:$u_7$] at (0,3.8) (u7) {};  \node[label=right:$\,u_8$] at (0.8,3.8) (u8) {};
  \node[label=right:$\,v_9$] at (-0.2,2.5) (v8) {};  \node[label=left:$u_9\>$] at (-1,2.7) (u9) {};
  \draw[dashed,->,>=stealth] (uu7)--(uu8)--(uu9)--(uu7)--(u7);
  \draw[dashed,->,>=stealth] (uu8) to[bend left=14] (u8);  \draw[dashed,->,>=stealth] (uu9) to[bend right=4] (u9);
  \tikzstyle{every path}=[draw,thick, color=black]
  \tikzstyle{every node}=[draw, black, shape=circle, minimum size=2.5pt,inner sep=0pt, fill=black]
  \draw (u7) to[bend left=12] (v8) to[bend left=12] (-1.9,1.1) node{} --(-2.5,0.95) node{};
  \draw (u8) to[bend right=16] (3.1,2.7) node{} --(3.7,2.9) node{};
  \draw (u9) to[bend left] (-2.2,5.7) node{} --(-2.5,6.2) node{};
  \draw[dashed] (v8)--(u9) (u8)--(u7) ;
  \tikzstyle{every path}=[draw,thick, color=black]
  \tikzstyle{every node}=[draw, white, shape=circle, minimum size=2.2pt,inner sep=0pt, fill=white]
  \node[white] at (-2.2,1) {};  \node[white] at (3.4,2.8) {};  \node[white] at (-2.36,5.95) {};
\end{tikzpicture} 
$$ $$
\begin{tikzpicture}[xscale=0.5, yscale=0.6]
  \tikzstyle{every node}=[draw, black, shape=circle, minimum size=2.8pt,inner sep=0pt, fill=black]
  \tikzstyle{every path}=[draw, color=black]
  \draw[line width=1.2pt,dotted] (-0.5,0)--(0.5,0) to [bend right=20] (3,1.5)--(3.5,2.2) to [bend right=20] (3.5,5)--(3,5.7) to [bend right=20] (0.5,7)
	--(-0.5,7) to [bend right=20] (-3,5.7)--(-3.5,5) to [bend right=20] (-3.5,2.2)--(-3,1.5) to [bend right=20] (-0.5,0) ;
  \draw[->] (0.5,0) node{} to [bend right=20] (3,1.5) node{} to +(0.5,0);
  \draw[->] (3.5,2.2) node{} to [bend right=20] (3.5,5) node{} to +(0.3,0.3);
  \draw[->] (3,5.7) node{} to [bend right=20] (0.5,7) node{} to +(0,0.5);
  \draw[->] (-0.5,0) node{} to [bend left=20] (-3,1.5) node{} to +(-0.5,0);
  \draw[->] (-3.5,2.2) node{} to [bend left=20] (-3.5,5) node{} to +(-0.3,0.3);
  \draw[->] (-3,5.7) node{} to [bend left=20] (-0.5,7) node{} to +(0,0.5);
  \node[draw=none,fill=none] at (-4.4,6.5) {$C$};
  \node[draw=none,fill=none] at (-2,7) {$Q_3$};
  \node[draw=none,fill=none] at (-4.4,4) {$Q_2$};
  \node[draw=none,fill=none] at (-2,0.15) {$Q_1$};
  \node[draw=none,fill=none] at (2,7) {$Q_4$};
  \node[draw=none,fill=none] at (4.4,4) {$Q_5$};
  \node[draw=none,fill=none] at (2.1,0.15) {$Q_6$};
  \node[draw=none,fill=none] at (-0.9,1.6) {$Q_7$};
  \node[draw=none,fill=none] at (1.6,2.8) {$Q_8$};
  \node[draw=none,fill=none] at (0,5.4) {$Q_9$};
  \tikzstyle{every node}=[draw, black, shape=circle, minimum size=2.2pt,inner sep=0pt, fill=white]
  \footnotesize
  \draw[draw=none,fill=gray!20] (-0.6,4.3)--(-0.1,3.4)--(0.8,3.9)--cycle;
  \node[label=left:$u_7'$] at (-0.6,4.3) (uu7) {};  \node[label=left:$u_8'\!$] at (-0.1,3.4) (uu8) {};  \node[label=below:$u_9'$] at (0.8,3.9) (uu9) {};
  \tikzstyle{every node}=[draw, black, shape=circle, minimum size=2.8pt,inner sep=0pt, fill=black]
  \node[label=below:$u_8$] at (-0.1,2.8) (u8) {};  \node[label=left:$u_7\>$] at (-1.2,2.8) (u7) {};
  \node[label=right:$\,u_9$] at (1.6,4.1) (u9) {};
  \draw[dashed,->,>=stealth] (uu7)--(uu8)--(uu9)--(uu7)--(u7);
  \draw[dashed,->,>=stealth] (uu8) -- (u8);  \draw[dashed,->,>=stealth] (uu9) -- (u9);
  \tikzstyle{every path}=[draw,thick, color=black]
  \tikzstyle{every node}=[draw, black, shape=circle, minimum size=2.5pt,inner sep=0pt, fill=black]
  \draw (u7) to[bend left=12] (-1.9,1.3) node{} --(-2.5,0.95) node{};
  \draw (u8) to[bend right=12] (3.1,4.1) node{} --(3.64,4.5) node{};
  \draw (u9) to[bend right] (-1.5,6.3) node{} --(-2.2,6.4) node{};
  \draw[dashed] (u7)--(u8) (u9)--(1.8,3.37) node[label=right:$\>v_9$] {};
  \tikzstyle{every path}=[draw,thick, color=black]
  \tikzstyle{every node}=[draw, white, shape=circle, minimum size=2.2pt,inner sep=0pt, fill=white]
  \node[white] at (-2.2,1.1) {};  \node[white] at (3.4,4.3) {};  \node[white] at (-1.9,6.35) {};
\end{tikzpicture} 
$$
\caption{Illustrating \Cref{clm:6paths}; three of the possibilities of a recursive decomposition of 
	the interior of the cycle $C$ along vertical paths $Q_7,Q_8,Q_9$ for $m=6$ are shown in the pictures. 
	The cycle $C$ is drawn using heavy dots, and the shaded triple $(u_7',u_8',u_9')$
	is the ``tricoloured'' triangular face of $G$ that we start with in the proof.
	$Q_7$, $Q_8$ and $Q_9$ are the selected subpaths of the vertical paths starting in $u_7'$, $u_8'$ and~$u_9'$ in this order.
	Lengths and marked orientations towards the root of the vertical paths $Q_1,\ldots,Q_6$ making up $C$ are not important for this illustration.}
\label{fig:recdecomp6}
\end{figure}

\smallskip
Now let~$m=6$.
By a short case analysis of the contracted graph $B'$ defined above, 
there exists a triangle $B'_0\subseteq B'$ sharing at most one edge with $C'$, more precisely,
$V(B'_0)=\{x_7,x_8,x_9\}$ and $E(B'_0)\cap E(C')\subseteq\{x_8x_9\}$. 
Then, up to symmetry between our vertices, the distance between $x_8$ and $x_9$ on $C'$ is one or two, 
the distance between $x_7$ and $x_9$ on $C'$ equals two and the distance between $x_7$ and $x_8$ on $C'$ is two or three.
Similarly to the previous paragraph, by simultaneous ``uncontraction'', there exist a vertex triple $u'_7,u'_8,u'_9\in V(C)\cup U_C$ such that 
$\iota(u'_i)=\iota(x_i)$ for $i\in\{7,8,9\}$ and $(u'_7,u'_8,u'_9)$ bounds a triangular face of~$G$.

We choose vertices $u_8$ on $R^+[u_8']$ and $u_9$ on $R^+[u_9']$ lexicographically minimising
the lengths of the paths $R^+[u_8]$ in the first place, and of $R^+[u_9]$ in the second place,
subject to the following; $\iota(u_8)=\iota(x_8)$, $\iota(u_9)=\iota(x_9)$,
$u_8$ is adjacent to a vertex $v_8\in V(R^+[u'_7])$ and $u_9$ is adjacent to a vertex $v_9\in V(R[u'_7]\cup R[u_8])$.
In particular, we have $v_9\not\in V(C)$ which will be used further.

Note that the previous definition of $u_9$ as a neighbour of some $v_9\in V(R[u'_7]\cup R[u_8])$ is sound 
since $R[u'_7]\cup R[u_8]\not=\emptyset$ holds similarly as in the case of~$m\leq5$.
If there was a vertex $x\in V(R[u_8])\setminus\{u_8\}$ adjacent to $R[u'_7]$, we would contradict our minimal choice with $u_8=x$ and $u_9=u'_9$.
Hence there is no such $x\not=u_8$, and we analogously argue that there is no vertex of $V(R[u_9])\setminus\{u_9\}$ adjacent to~$R[u'_7]\cup R[u_8]$.
Finally, we choose $u_7=v_8$, or $u_7=v_9$ if $v_9\in V(R[u'_7])$ is closer to $u'_7$ than~$v_8$~is.
So, the choice of $Q_7=R[u_7]$, $Q_8=R[u_8]$ and $Q_9=R[u_9]$ fulfills condition~\ref{it:nbatends}) of the lemma.
See~\Cref{fig:recdecomp6}.

To address condition~\ref{it:dividD}), we consider the graph $F:=(C\cup R^+[u_7]\cup R^+[u_8]\cup R^+[u_9])+u_8v_8+u_9v_9\subseteq G$ 
and observe that $F$ is formed by $C$ and by exactly three internally disjoint paths starting in a vertex $s\in\{v_8,v_9\}$
and denoted by $S_7$, $S_8$ and $S_9$ such that they arrive at~$Q_{\iota(u_7)}$, $Q_{\iota(u_8)}$ and $Q_{\iota(u_9)}$, respectively.
More specifically, if $u_7=v_8$, we have $s=v_9$ and $S_9=R^+[u_9]+u_9v_9$ and $S_7,S_8\subseteq(R^+[u_7]\cup R^+[u_8])+u_8v_8$,
and if $u_7=v_9$, we have $s=v_8$ and $S_8=R^+[u_8]+u_8v_8$ and $S_7\subseteq R^+[u_7]$, $S_9\subseteq(R^+[u_7]\cup R^+[u_9])+u_9v_9$.

The cycle $D_0\subseteq F$ containing $S_7\cup S_8$, by our choice of the vertices $x_7,x_8$, intersects $3$ or $4$ of the paths $Q_1,\ldots,Q_6$
and the paths $Q_7,Q_8$ (but not $Q_9$), which is at most $4+2=6$ as required by condition~\ref{it:dividD}).
Furthermore, since $D_0$ hits at least three of the paths on $C$, 
there is $1\leq h\leq6$ such that $D_0$ intersects $Q_{h}$ and $h\not\in\{\iota(u_7),\iota(u_8)\}$.
Then each of the two remaining cycles $D_1,D_2\subseteq F$ containing $S_9$ intersects $2$ or $3$ of the paths $Q_1,\ldots,Q_6$ by our choice of $x_7,x_8,x_9$,
and possibly all three of $Q_7,Q_8,Q_9$, which is again at most $3+3=6$.
And since $Q_9=R[u_9]\subseteq S_9$ is (strictly) separated from $Q_{h}$ by the cycle $D_0\Delta C$ in the embedding of~$G$,
there cannot exist an edge between $V(Q_9)$ and $V(Q_{h})$.
Condition~\ref{it:noQ9}) is fulfilled as well.
\end{proof}

With \Cref{clm:6paths} at hand, the rest of the proof is a straightforward induction starting from the outer face of $G$ and continuing ``inward''
by recursive applications of \Cref{clm:6paths}.
To construct the factor $M$, in rough sketch, we assign vertices of $M$ to each of the primeval vertical paths in $G$, 
and we build a tree decomposition of $M$ using bags formed according to the individual applications of \Cref{clm:6paths}.
Unlike in proofs of the traditional planar graph product structure, where the correspondence of the primeval vertical paths to the vertices of $M$
is one-to-one, we now assign specially crafted $5$-tuples of vertices of $M$ to each of the primeval vertical paths in $G$, 
and this helps us to express $G$ as an induced subgraph of the product.

To formulate the above sketch precisely, we need appropriate notation.
For a graph $G$ with a BFS tree $T\subseteq G$ rooted at $r\in V(G)$, let the {\em level} of a vertex $x\in V(G)$ -- denoted by $\ell_T(x)$ -- 
be the distance from $x$ to $r$ in~$T$ (which equals this distance in~$G$).
Note that the levels of adjacent vertices differ by at most $1$, and for
the coming construction it is crucial that the levels of the vertices of any $T$-vertical path strictly monotonously decrease in the direction towards the root.
Furthermore, let $\ca Q$ be a family of pairwise disjoint $T$-vertical paths in~$G$ covering all vertices, i.e., $\bigcup_{Q\in\ca Q}V(Q)=V(G)$.
For $\ca Q_0\subseteq\ca Q$, we say that a subgraph $G_0\subseteq_i G$ is {\em induced by $\ca Q_0$} 
if $G_0$ is the subgraph induced on the vertex subset~$\bigcup_{Q\in\ca Q_0}V(Q)$.

We say that a product $P\boxtimes M$, where $P$ is a path on the vertices $(p_0,p_1,\ldots,p_m)$ in this order for suitably large~$m$, 
is a {\em$t$-fold nice product structure} of $(G,\ca Q)$ if $G$ is isomorphic to an induced subgraph of $P\boxtimes M$ such that the following holds:
\begin{enumerate}[(i)]
\item $V(M)=\ca Q\times\{1,\ldots,t\}$,
\item\label{it:pmvert}
every vertex $v\in V(G)$ where $v\in V(Q)$ for $Q\in\ca Q$, is represented in an induced subgraph of $P\boxtimes M$ by a vertex
	$[p_i,m_v]\in V(P\boxtimes M)$ such that $i=\ell_T(v)$ and $m_v=(Q,j)$ for some $j\in\{1,\ldots,t\}$, and
\item\label{it:threedisti} 
for every three consecutive vertices $v,u,w$ on a path $Q\in\ca Q$, we have in \eqref{it:pmvert} $m_v=(Q,j)$, $m_u=(Q,j')$ and $m_w=(Q,j'')$ such that $j$, $j'$ and $j''$ are pairwise distinct.
\end{enumerate}
Furthermore, for a factor $M$ as above,
we say that a tree decomposition $(T_1,\ca X)$ of the graph $M$ is {\em $\ca Q$-aligned of thickness $k$}
if every bag in $\ca X$ is of the form $\ca Q_0\times\{1,\ldots,t\}$ where $\ca Q_0\subseteq\ca Q$ and $|\ca Q_0|\leq k$.
We also say that $(T_1,\ca X)$ {\em exposes} a subfamily $\ca Q_1\subseteq\ca Q$ if there is a bag $X\in\ca X$ such that $\ca Q_1\times\{1,\ldots,t\}\subseteq X$.

Now we formulate:
\begin{claim}\label{clm:6recur}
Let $G_1\subseteq_i G$ be a subgraph of $G$ induced by a family of vertical paths $\ca Q_1$ in~$G$.
Let $C\subseteq G$ be a cycle such that $C$ is a facial cycle in the subembedding of $G_1$, 
formally written as $G_1\supseteq C$ and $V(G_1)\cap U_C=\emptyset$,
and that all paths of~$\ca Q_1$ intersecting~$C$ form~a~subset $\ca Q_0\subseteq\ca Q_1$ of size $|\ca Q_0|\leq6$.
Assume that $P\boxtimes M_1$ is a $5$-fold nice product structure of $(G_1,\ca Q_1)$, such that 
$M_1$ has a $\ca Q_1$-aligned tree decomposition $(T_1,\ca X_1)$ of thickness $8$ which exposes~$\ca Q_0$.

Let $\ca Q_2$ denote the family of vertical paths obtained from $\ca Q_1$ by adding the nonempty ones of the paths $Q_7,Q_8,Q_9$
which result by an application of \Cref{clm:6paths}.
Then the subgraph $G_2\subseteq_i G$ induced by $\ca Q_2$ has a $5$-fold nice product structure $P\boxtimes M_2$ and $M_2\supseteq M_1$ 
has a $\ca Q_2$-aligned tree decomposition $(T_2,\ca X_2)$ of thickness $8$ which exposes, for each of the child cycles $D$ of $C$ from \Cref{clm:6paths},
the subset of the paths of $\ca Q_2$ intersecting~$D$.
\end{claim}

\begin{proof}[Subproof.]
The core task is to construct the factor $M_2$ from previous~$M_1$, where we may obviously assume that the path $P$ is suitably long.
Starting from $M:=M_1$, we will use $M$ to denote the intermediate graph $M_1\subseteq M\subseteq M_2$ built so far in the proof.

For $k=7,8,9$ in this order, and assuming $Q_k$ is nonempty, we do the following.
Let $Q_k$ have ends $q_1,q_2\in V(Q_k)$ such that $\ell_T(q_1)\leq\ell_T(q_2)$.
We add to $M$ five new vertices $(Q_k,j)$ where $j\in\{1,\ldots,5\}$ inducing a $5$-clique,
and represent the vertices $v\in V(Q_k)$ in $P\boxtimes M$ by $v'=[p_i,(Q_k,j)]$ where $i=\ell_T(v)$ and $j$ is as follows;
$j=1$ if $v=q_1$, or $j=5$ if $v=q_2\not=q_1$, or $j=2+(i\!\mod3)$ if $v$ is an internal vertex of~$Q_k$.
Notice that our choice ensures fulfillment of both conditions \eqref{it:pmvert} and \eqref{it:threedisti} for $Q=Q_k$.
These vertices $v'=[p_i,(Q_k,j)]$ for $v\in V(Q_k)$ clearly induce a subgraph in $P\boxtimes M$ isomorphic to the path~$Q_k$.

Next, we define all edges between these new vertices $(Q_k,j)$ and the rest of $M$.
We know from \Cref{clm:6paths}\,b) and planarity that no vertex of $V(G_1)$ or of $V(Q_{k'})$ where $7\leq k'<k$
is adjacent to any internal vertex of $Q_k$ (only possibly to $q_1$ or $q_2$).
Hence, there are no more edges in $M$ between $(Q_k,j)$ for $j\in\{2,3,4\}$ and the rest of $M$.

The possible additional neighbours of $(Q_k,1)$ in $M$ are determined as follows.
Pick any $Q\in\ca Q_1\cup\{Q_{k'}:7\leq k'<k\}$
such that $q_1$ is adjacent to a vertex of $V(Q)$.
As stated above, $q_1$ is represented in $P\boxtimes M$ by the vertex $q_1'=[p_i,(Q_k,1)]$.
Furthermore, the possible neighbours of $q_1$ on $Q$ belong to some triple of consecutive vertices $u,v,w\in V(Q)$ which are,
by the condition \eqref{it:threedisti} above, represented in the factor $M$ of $P\boxtimes M$ by some pairwise distinct vertices $m_v=(Q,j)$, $m_u=(Q,j')$ and $m_w=(Q,j'')$.
It is thus sound to demand that the neighbours of $(Q_k,1)$ in $M$ are; $m_v$ iff $q_1v\in E(G)$, $m_u$ iff $q_1u\in E(G)$ and $m_w$ iff $q_1w\in E(G)$.
This is (independently) repeated for all choices of $Q$ being adjacent to~$q_1$.

The possible additional neighbours of $(Q_k,5)$, if $q_2\not=q_1$, in $M$ are determined in the same way as for~$q_1$.
We have hence defined the edge set of $M$, extended with the vertices assigned to the primeval vertical path $Q_k$, such that
the corresponding induced subgraph of $P\boxtimes M$ is isomorphic to the subgraph of $G$ induced by $\ca Q_1\cup\{Q_7,\ldots,Q_k\}$.

Iterating the previous steps up to $k=9$, we obtain the desired factor~$M=M_2$.
It remains to check out a suitable tree decomposition of~$M_2$.
Let $\ca Q_0\subseteq\ca Q_1$ denote the subset of the (at most~$6$) paths of $\ca Q_1$ intersecting~$C$,
and recall that the decomposition $(T_1,\ca X_1)$ of $M_1$ is assumed to expose $\ca Q_0$ in a node $z\in V(T_1)$.
We form $T_2$ from $T_1$ by adding two new mutually adjacent nodes $z_1,z_2$ such that $z_1$ is also adjacent to~$z$, and assigning them new bags 
$Z_1:=\big(\ca Q_0\cup\{Q_7,Q_8\}\big)\times\{1,\ldots,5\}$ and $Z_2:=\big((\ca Q_0\setminus\{Q_h\})\cup\{Q_7,Q_8,Q_9\}\big)\times\{1,\ldots,5\}$
where $Q_h\in\ca Q_0$ is as in \Cref{clm:6paths}\,\ref{it:noQ9}).
Then these new bags expose the subset(s) of $\ca Q_2$ intersecting each of the child cycles $D$ of $C$, as desired, and, 
by \Cref{clm:6paths}\,\ref{it:noQ9}), $(T_2,\ca X_2)$ where $\ca X_2=\ca X_1\cup\{Z_1,Z_2\}$ is a valid tree decomposition of~$M_2$.
Furthermore, $(T_2,\ca X_2)$ is by the definition $\ca Q_2$-aligned of thickness~$8$.
\end{proof}

Finally, we denote by $C_0$ the outer triangular face of~$G$ (containing the root of~$T$), 
and treat the vertices of $C_0$ as three zero-length vertical paths $Q_1,Q_2,Q_3$.
We initially apply \Cref{clm:6paths} via \Cref{clm:6recur} to $C=C_0$ and $\ca Q_1=\{Q_1,Q_2,Q_3\}$, and continue with the same recursively
(the assumptions are clearly satisfied again) for each of the child cycles $D$ of $C$ obtained in \Cref{clm:6paths} until we get $U_D=\emptyset$.
At the end we get, from \Cref{clm:6recur}, a $5$-fold nice product structure $P\boxtimes M$ of whole $G$ where $M$ has a ($\ca Q_2$-)aligned
tree decomposition of thickness~$8$ -- that is, every bag of the decomposition is of size at most $5\cdot8=40$.
Hence, the tree-width of $M$ is at most $40-1=39$.
\end{proof}

\section{Concluding Remarks}
\label{sec:conclu}

The primary focus of our paper is an introduction of a new concept of potential interest,
and as such it naturally brings many questions and open problems, (some of) which we briefly survey in this last section.

From the computer-science perspective, probably the most important question is about the complexity of computing the $\ca H$-clique-width.
Computing traditional clique-width exactly is NP-hard~\cite{DBLP:journals/siamdm/FellowsRRS09}, and hence
the same holds for computing $\ca H$-clique-width exactly in general.
However, a question is whether for some special classes $\ca H$ one could compute exact $\ca H$-clique-width faster.
This is trivially possible, by \Cref{clm:basics}\,\ref{it:H2}), when $\ca H$ is the class of all graphs -- which is uninteresting.
Is it true that computing $\ca H$-clique-width exactly is NP-hard for every fixed family $\ca H$ except in ``similarly trivial'' cases?

On the other hand, traditional clique-width can be approximated in FPT time with respect to the solution value
\cite{DBLP:journals/jct/OumS06,DBLP:journals/siamcomp/HlinenyO08}.
A big goal would be to extend this approximation result to $\ca H$-clique-width, perhaps with an additional parameter
capturing some properties of~$\ca H$.
In particular, with respect to \Cref{sec:induprod}, we emphasise:
\begin{problem}
Let $\ca P^\circ$ denote the class of reflexive paths.
Can one, for input graph $G$, approximate $\hcw{\ca P^\circ\!}(G)$ in FPT time with respect to the solution value?
\end{problem}

Next group of questions concerns combinatorial properties investigated in this paper.
In regard of \Cref{sec:propehcw} and boundedness of local clique-width, we bring the following one:
\begin{problem}[cf.~\Cref{thm:localcw}]
Can one characterise families $\ca H$ of loop graphs such that, for all graphs, their bounded $\ca H$-clique-width implies bounded local clique-width?
\end{problem}

A more interesting and natural question, however, comes in a direct relation to the Planar graph product structure theorem and to \Cref{thm:fromprods}.
We know that graphs of bounded clique-width that do not contain subgraphs isomorphic to large $K_{t,t}$ are as well of bounded tree-width.
A natural counterpart of this claim in the context of $\ca P^\circ$-clique-width would be:
\begin{problem}[cf.~\Cref{thm:hcwproduct}, \Cref{thm:fromprods}]
Assume $t$ is a fixed integer, and $G$ an arbitrary graph such that $\hcw{\ca P^\circ}(G)\leq t$ and $G$ has no subgraph isomorphic to $K_{t,t}$.
Is it then true that $G\subseteq P\boxtimes M$ where $P$ is a path and $M$ is a suitable graph of tree-width bounded in terms of~$t$?
\end{problem}

Another possible question, already mentioned in \Cref{sec:prelim}, is whether $\ca H$-clique-width is in some way closed under taking graph complement. 
This is a prominent and desired property of ordinary clique-width.
At first sight, it could be natural to ask whether, having any simple graph $G$ and its complement $\bar G$, 
we can bound $\hcw{\ca H}(G)$ in terms of $\hcw{\ca H}(\bar G)$.
However, classes $\ca H$ of bounded degree imply bounded local clique-width (\Cref{thm:localcw}) and this property is not even 
asymptotically closed under taking graph complement.

If one instead asks whether, for every graph $H\in\ca H$, there is a graph~$H'$ (perhaps $H'\in\ca H'$ for a suitable other class~$\ca H'$) 
such that for all graphs~$G$, the $\{H'\}$-clique-width of the complement $\bar G$ is bounded by a fixed function of the $\{H\}$-clique-width of $G$,
then the answer is again negative already when $\ca H=\ca P^\circ$ [personal communication].
Though, it is possible to investigate this question for other specifically chosen classes $\ca H$ and~$\ca H'$.

One may also consider, as a direct generalisation of the paths case, investigating graph classes of bounded $\ca H$-clique-width 
when $\ca H=\ca T^\circ$ is the family of (all) reflexive trees.
However, \Cref{prop:squarestar} shows that even if we restrict to the subcase of $\ca H$ formed by all stars and trivial upper bounds, 
we get very rich graph classes; vertices of high degree in $\ca H$ seem to cause a lot of problems in the structural setting.

\smallskip
Lastly, there is a bunch of interesting questions concerning possible relations of $\ca H$-clique-width to the currently hot trend
of studying structural graph properties through the lens of FO logic on graphs and of FO transductions -- transformations
of one graph into another defined by FO formulas.
Since our paper does not define some special terms (e.g., transductions) which are necessary to properly formulate these questions,
we refer interested readers to \cite{HtransdarXiv}.

\subsection*{Acknowledgments}
We acknowledge helpful discussions with Jakub Gajarsk\'y on the questions and problems posed in \Cref{sec:conclu},
and with Adam Straka especially on the question of $\ca H$-clique-width of a graph complement.

\bibliography{Hcw}

\begin{thebibliography}{10}

\bibitem{DBLP:conf/isaac/BekosLH022}
Michael~A. Bekos, Giordano~Da Lozzo, Petr Hlin\v{e}n{\'{y}}, and Michael
  Kaufmann.
\newblock Graph product structure for h-framed graphs.
\newblock In {\em {ISAAC}}, volume 248 of {\em LIPIcs}, pages 23:1--23:15.
  Schloss Dagstuhl - Leibniz-Zentrum f{\"{u}}r Informatik, 2022.
\newblock \href {https://doi.org/10.4230/LIPICS.ISAAC.2022.23}
  {\path{doi:10.4230/LIPICS.ISAAC.2022.23}}.

\bibitem{DBLP:journals/siamdm/BonamyGP22}
Marthe Bonamy, Cyril Gavoille, and Micha{\l} Pilipczuk.
\newblock Shorter labeling schemes for planar graphs.
\newblock {\em {SIAM} J. Discret. Math.}, 36(3):2082--2099, 2022.
\newblock \href {https://doi.org/10.1137/20M1330464}
  {\path{doi:10.1137/20M1330464}}.

\bibitem{DBLP:conf/soda/BonnetGKTW21}
{\'{E}}douard Bonnet, Colin Geniet, Eun~Jung Kim, St{\'{e}}phan Thomass{\'{e}},
  and R{\'{e}}mi Watrigant.
\newblock Twin-width {II:} small classes.
\newblock In {\em {SODA}}, pages 1977--1996. {SIAM}, 2021.
\newblock \href {https://doi.org/10.1137/1.9781611976465.118}
  {\path{doi:10.1137/1.9781611976465.118}}.

\bibitem{DBLP:journals/jacm/BonnetKTW22}
{\'{E}}douard Bonnet, Eun~Jung Kim, St{\'{e}}phan Thomass{\'{e}}, and
  R{\'{e}}mi Watrigant.
\newblock Twin-width {I:} tractable {FO} model checking.
\newblock {\em J. {ACM}}, 69(1):3:1--3:46, 2022.
\newblock \href {https://doi.org/10.1145/3486655} {\path{doi:10.1145/3486655}}.

\bibitem{DBLP:journals/corr/abs-2202-11858}
{\'{E}}douard Bonnet, O{-}joung Kwon, and David~R. Wood.
\newblock Reduced bandwidth: a qualitative strengthening of twin-width in
  minor-closed classes (and beyond).
\newblock {\em CoRR}, abs/2202.11858, 2022.
\newblock URL: \url{https://arxiv.org/abs/2202.11858}.

\bibitem{DBLP:conf/lics/DawarGK07}
Anuj Dawar, Martin Grohe, and Stephan Kreutzer.
\newblock Locally excluding a minor.
\newblock In {\em {LICS}}, pages 270--279. {IEEE} Computer Society, 2007.
\newblock \href {https://doi.org/10.1109/LICS.2007.31}
  {\path{doi:10.1109/LICS.2007.31}}.

\bibitem{DBLP:journals/jct/DingOOV96}
Guoli Ding, Bogdan Oporowski, James~G. Oxley, and Dirk Vertigan.
\newblock Unavoidable minors of large 3-connected binary matroids.
\newblock {\em J. Comb. Theory, Ser. {B}}, 66(2):334--360, 1996.
\newblock \href {https://doi.org/10.1006/JCTB.1996.0026}
  {\path{doi:10.1006/JCTB.1996.0026}}.

\bibitem{distel2023powers}
Marc Distel, Robert Hickingbotham, Michal~T. Seweryn, and David~R. Wood.
\newblock Powers of planar graphs, product structure, and blocking partitions.
\newblock In {\em {EUROCOMB'23: European Conference on Combinatorics, Graph
  Theory and Applications}}, pages 355--361. MUNI Press, Masaryk University,
  2023.
\newblock \href {https://doi.org/10.5817/CZ.MUNI.EUROCOMB23-049}
  {\path{doi:10.5817/CZ.MUNI.EUROCOMB23-049}}.

\bibitem{DBLP:journals/jacm/DujmovicEGJMM21}
Vida Dujmovi\'c, Louis Esperet, Cyril Gavoille, Gwena{\"{e}}l Joret, Piotr
  Micek, and Pat Morin.
\newblock Adjacency labelling for planar graphs (and beyond).
\newblock {\em J. {ACM}}, 68(6):42:1--42:33, 2021.
\newblock \href {https://doi.org/10.1145/3477542} {\path{doi:10.1145/3477542}}.

\bibitem{Dujmovic2020Planar}
Vida Dujmovi\'c, Louis Esperet, Gwena\"el Joret, Bartosz Walczak, and David~R.
  Wood.
\newblock Planar graphs have bounded nonrepetitive chromatic number.
\newblock {\em Advances in Combinatorics}, 3 2020.
\newblock \href {https://doi.org/10.19086/aic.12100}
  {\path{doi:10.19086/aic.12100}}.

\bibitem{DBLP:journals/jacm/DujmovicJMMUW20}
Vida Dujmovi\'c, Gwena{\"{e}}l Joret, Piotr Micek, Pat Morin, Torsten Ueckerdt,
  and David~R. Wood.
\newblock Planar graphs have bounded queue-number.
\newblock {\em J. {ACM}}, 67(4):22:1--22:38, 2020.
\newblock \href {https://doi.org/10.1145/3385731} {\path{doi:10.1145/3385731}}.

\bibitem{DBLP:journals/jctb/DujmovicMW23}
Vida Dujmovi\'c, Pat Morin, and David~R. Wood.
\newblock Graph product structure for non-minor-closed classes.
\newblock {\em J. Comb. Theory, Ser. {B}}, 162:34--67, 2023.
\newblock \href {https://doi.org/10.1016/J.JCTB.2023.03.004}
  {\path{doi:10.1016/J.JCTB.2023.03.004}}.

\bibitem{DBLP:journals/corr/abs-2001-08860}
Zden\v{e}k Dvo\v{r}{\'{a}}k, Tony Huynh, Gwena{\"{e}}l Joret, Chun{-}Hung Liu,
  and David~R. Wood.
\newblock Notes on graph product structure theory.
\newblock In {\em 2019-20 MATRIX Annals}, pages 513--533, Cham, 2021. Springer
  International Publishing.

\bibitem{DBLP:journals/siamdm/FellowsRRS09}
Michael~R. Fellows, Frances~A. Rosamond, Udi Rotics, and Stefan Szeider.
\newblock Clique-width is {NP}-complete.
\newblock {\em {SIAM} J. Discret. Math.}, 23(2):909--939, 2009.
\newblock \href {https://doi.org/10.1137/070687256}
  {\path{doi:10.1137/070687256}}.

\bibitem{DBLP:journals/jacm/FrickG01}
Markus Frick and Martin Grohe.
\newblock Deciding first-order properties of locally tree-decomposable
  structures.
\newblock {\em J. {ACM}}, 48(6):1184--1206, 2001.
\newblock \href {https://doi.org/10.1145/504794.504798}
  {\path{doi:10.1145/504794.504798}}.

\bibitem{DBLP:conf/icalp/HlinenyJ23}
Petr Hlin\v{e}n{\'{y}} and Jan Jedelsk{\'{y}}.
\newblock Twin-width of planar graphs is at most 8, and at most 6 when
  bipartite planar.
\newblock In {\em {ICALP}}, volume 261 of {\em LIPIcs}, pages 75:1--75:18.
  Schloss Dagstuhl - Leibniz-Zentrum f{\"{u}}r Informatik, 2023.
\newblock \href {https://doi.org/10.4230/LIPICS.ICALP.2023.75}
  {\path{doi:10.4230/LIPICS.ICALP.2023.75}}.

\bibitem{HtransdarXiv}
Petr Hlin\v{e}n{\'{y}} and Jan Jedelsk{\'{y}}.
\newblock Transductions of graph classes admitting product structure.
\newblock In {\em 2025 40th Annual ACM/IEEE Symposium on Logic in Computer
  Science (LICS)}, pages 843--855, 2025.
\newblock \href {https://doi.org/10.1109/LICS65433.2025.00069}
  {\path{doi:10.1109/LICS65433.2025.00069}}.

\bibitem{DBLP:journals/siamcomp/HlinenyO08}
Petr Hlin\v{e}n{\'{y}} and Sang{-}il Oum.
\newblock Finding branch-decompositions and rank-decompositions.
\newblock {\em {SIAM} J. Comput.}, 38(3):1012--1032, 2008.
\newblock \href {https://doi.org/10.1137/070685920}
  {\path{doi:10.1137/070685920}}.

\bibitem{DBLP:conf/wg/JacobP22}
Hugo Jacob and Marcin Pilipczuk.
\newblock Bounding twin-width for bounded-treewidth graphs, planar graphs, and
  bipartite graphs.
\newblock In {\em {WG}}, volume 13453 of {\em Lecture Notes in Computer
  Science}, pages 287--299. Springer, 2022.
\newblock \href {https://doi.org/10.1007/978-3-031-15914-5\_21}
  {\path{doi:10.1007/978-3-031-15914-5\_21}}.

\bibitem{DBLP:conf/mfcs/KralPS24}
Daniel Kr{\'{a}}l, Krist{\'{y}}na Pek{\'{a}}rkov{\'{a}}, and Kenny Storgel.
\newblock Twin-width of graphs on surfaces.
\newblock In {\em {MFCS}}, volume 306 of {\em LIPIcs}, pages 66:1--66:15.
  Schloss Dagstuhl - Leibniz-Zentrum f{\"{u}}r Informatik, 2024.
\newblock \href {https://doi.org/10.4230/LIPICS.MFCS.2024.66}
  {\path{doi:10.4230/LIPICS.MFCS.2024.66}}.

\bibitem{DBLP:journals/algorithmica/Lampis12}
Michael Lampis.
\newblock Algorithmic meta-theorems for restrictions of treewidth.
\newblock {\em Algorithmica}, 64(1):19--37, 2012.
\newblock \href {https://doi.org/10.1007/S00453-011-9554-X}
  {\path{doi:10.1007/S00453-011-9554-X}}.

\bibitem{DBLP:journals/jct/OumS06}
Sang{-}il Oum and Paul Seymour.
\newblock Approximating clique-width and branch-width.
\newblock {\em J. Comb. Theory, Ser. {B}}, 96(4):514--528, 2006.
\newblock \href {https://doi.org/10.1016/J.JCTB.2005.10.006}
  {\path{doi:10.1016/J.JCTB.2005.10.006}}.

\bibitem{DBLP:journals/combinatorics/UeckerdtWY22}
Torsten Ueckerdt, David~R. Wood, and Wendy Yi.
\newblock An improved planar graph product structure theorem.
\newblock {\em Electron. J. Comb.}, 29(2), 2022.
\newblock \href {https://doi.org/10.37236/10614} {\path{doi:10.37236/10614}}.

\end{thebibliography}

\end{document}